\newtheorem{theorem}{\bf Theorem}
\newtheorem{remark}{\bf Remark}
\newtheorem{definition}{\bf Definition}
\newtheorem{proposition}{\bf Proposition}
\newtheorem{lemma}{\bf Lemma}
\def\QED{~\rule[-1pt]{5pt}{5pt}\par\medskip}
\newenvironment{proof}{{\bf Proof: \ }}{ \hfill \QED}
\newcommand{\bx}{{\bf x}}
\newcommand{\by}{{\bf y}}
\newcommand{\bz}{{\bf z}}
\newcommand{\bv}{{\bf v}}
\newcommand{\bw}{{\bf w}}
\newcommand{\bg}{{\bf g}}
\newcommand{\upd}{}
\DeclareMathOperator{\diag}{diag}
\newcommand*{\SHORTVERSION}{}
\newcommand*{\FIGDOUBLECOL}{}%
\begin{document}
%
\title{Semidefinite Programming Approach to Gaussian Sequential Rate-Distortion Trade-offs}
%
%
%

\author{Takashi Tanaka, 
        Kwang-Ki K. Kim, 
        Pablo A. Parrilo, 
        and~Sanjoy K. Mitter
\thanks{T.~Tanaka is with ACCESS Linnaeus Center, KTH Royal Institute of Technology, Stockholm, 10044  Sweden.}
\thanks{P.~A.~Parrilo, and S.~K.~Mitter are with the Laboratory for Information and Decision Systems, Massachusetts Institute of Technology, Cambridge,
MA, 02139 USA.}
\thanks{K.-K.~K.~Kim is with the Electronic Control Development Team of Hyundai Motor Company (HMC) Research \& Development Division in South Korea. }
}

\maketitle

\begin{abstract}
Sequential rate-distortion (SRD) theory provides a framework for studying the fundamental trade-off between data-rate and data-quality in real-time communication systems. 
In this paper, we consider the SRD problem for multi-dimensional time-varying Gauss-Markov processes under mean-square distortion criteria.
We first revisit the sensor-estimator separation principle, which asserts that considered SRD problem is equivalent to a joint sensor and estimator design problem in which data-rate of the sensor output is minimized while the estimator's performance satisfies the distortion criteria.
We then show that the optimal joint design can be performed by semidefinite programming. 
A semidefinite representation of the corresponding SRD function is obtained.
Implications of the obtained result in the context of zero-delay source coding theory and applications to networked control theory are also discussed.
\end{abstract}

\begin{IEEEkeywords}
Control over communications; LMIs; Optimization algorithms; Stochastic optimal control; Kalman filtering
\end{IEEEkeywords}

%
\IEEEpeerreviewmaketitle

\section{Introduction}

{\upd 
In this paper, we study a fundamental performance limitation of zero-delay communication systems using the sequential rate-distortion (SRD) theory.
Suppose that  $\bx_t$ is an $\mathbb{R}^n$-valued discrete time random process with known statistical properties. At every time step, the encoder observes a realization of the source $\bx_t$ and generates a binary sequence ${\bf{b}}_t\in\{0,1\}^{l_t}$ of length $l_t$, which is transmitted to the decoder. The decoder produces an estimation $\bz_t$ of $\bx_t$ based on the messages $\bf{b}_t$ received up to time $t$. Both encoder and decoder have infinite memories of the past. 
A zero-delay communication system is determined by a selected encoder-decoder pair, whose performance is analyzed  in the trade-off between the rate (\emph{viz}. the average number of \emph{bits} that must be transmitted per time step) and the distortion (\emph{viz}. the discrepancy between the source signal  $\bx_t$ and the reproduced signal $\bz_t$). The region in the rate-distortion plane achievable by a zero-delay communication system is referred to as the \emph{zero-delay rate-distortion region}.\footnote{Formal definition of the zero-delay rate-distortion region is given in Section~\ref{secapplicationszerodelay}.}

The \emph{standard rate-distortion region} identified by Shannon only provides a conservative outer bound of the zero-delay rate-distortion region.
This is because, in general, achieving the standard rate-distortion region requires the use of anticipative (non-causal) codes (e.g., \cite[Theorem 10.2.1]{CoverThomas}). 
It is well known that the standard rate-distortion region can be expressed by  the \emph{rate-distortion function}\footnote{This quantity is defined by the infimum of the mutual information between the source and the reproduction subject to the distortion constraint \cite[Theorem 10.2.1]{CoverThomas}.} for general sources. 
In contrast, description of the zero-delay rate-distortion region requires more case-dependent knowledge of the optimal source coding schemes.
 For scalar memoryless sources, it is shown that the optimal performance of zero-delay codes is achievable by a scalar quantizer \cite{gaarder1982optimal}. Witsenhausen \cite{witsenhausen1979structure} showed that for the $k$-th order Markov sources, there exists an optimal zero-delay quantizer with memory structure of order $k$.
Neuhoff and Gilbert considered entropy-coded quantizers within the class of \emph{causal source codes} \cite{neuhoff1982causal}, and showed that for memoryless sources, the optimal performance is achievable by time-sharing memoryless codes. This result is extended to sources with memory in \cite{linder2006causal}.  An optimal memory structure of zero-delay quantizers for partially observable Markov processes on abstract (Polish) spaces is identified in \cite{yuksel2013optimal}.
The rate of finite-delay source codes for general sources and general distortion measures is analyzed in \cite{kostina2012fixed}.
Zero-delay or finite-delay joint source-channel coding problems have also been studied in the literature; \cite{walrand1983optimal,teneketzis2006structure,mahajan2009optimal,Gorantla2011information} to name a few.

In \cite{TatikondaThesis,tatikonda2004}, Tatikonda et al. studied the zero-delay rate-distortion region using a quantity called \emph{sequential rate-distortion function},\footnote{Closely related or apparently equivalent notions to the sequential rate-distortion function  have been given various names in the literature, including nonanticipatory $\epsilon$-entropy \cite{gorbunov1973nonanticipatory}, constrained distortion rate function \cite{bucy1980distortion}, causal rate-distortion function \cite{derpich2012}, and nonanticipative rate-distortion function \cite{charalambous2014nonanticipative}.} which is defined as the infimum of the Massey's directed information  \cite{massey1990causality} from the source process to the reproduction process subject to the distortion constraint.
Although the SRD function does not coincide with the boundary of the zero-delay rate-distortion region in general, 
it is recently shown that the SRD function provides a tight outer bound of the zero-delay rate-distortion region achievable by uniquely decodable codes \cite{silva2011,derpich2012}. 
This observation shows an intimate connection between the SRD function and the fundamental performance limitations of real-time communication systems. 
For this reason, we consider the SRD function as the main object of interest in this paper.

Closely related quantity to the SRD function was studied by Gorbunov and Pinsker \cite{gorbunov1973nonanticipatory} in the early 1970's.
Bucy \cite{bucy1980distortion} derived the SRD function for Gauss-Markov processes in a simple case.
In his approach, the problem of deriving the SRD function for Gauss-Markov processes under mean-square distortion criteria  (which henceforth will be simply referred to as the \emph{Gaussian SRD problem}) is viewed as a sensor-estimator joint design problem to minimize the estimation error subject to the data-rate constraint. 
This approach is justified by the ``sensor-estimator separation principle," which  
asserts that  an optimal solution (i.e., the optimal stochastic kernel, to be made precise in the sequel) to the Gaussian SRD problem is realizable by a two-stage mechanism with a linear-Gaussian memoryless sensor and the Kalman filter. 
Although this fact is implicitly shown in \cite{TatikondaThesis,tatikonda2004}, for completeness, we reproduce a proof  in this paper based on a technique used in  \cite{TatikondaThesis,tatikonda2004}.

The sensor-estimator separation principle gives us a structural understanding of the Gaussian SRD problem.
In particular, based on this principle,  we show that the Gaussian SRD problem can be formulated as a semidefinite programming problem (Theorem~\ref{propsdp}), which is the main contribution of this paper.
We derive a computationally accessible form (namely a semidefinite representation\footnote{To be precise, we show that the \emph{exponentiated} SRD function for multidimensional Gauss-Markov source is semidefinite representable by (\ref{stationarylmi}).} \cite{blekherman2013semidefinite}) of the SRD function, and provide an efficient algorithm to solve Gaussian SRD problems numerically. 

The semidefinite representation of the SRD function may be compared with an alternative analytical approach via Duncan's theorem, which states that ``twice the mutual information is merely the integration of the trace of the optimal mean square filtering error" \cite{duncan1970calculation}. 
Duncan's result was significantly generalized as the ``I-MMSE" relationships in non-causal \cite{guo2005mutual} and causal \cite{weissman2013directed} estimation problems.
Our SDP-based approaches are applicable to the cases with multi-dimensional and time-varying Gauss-Markov sources to which the existing I-MMSE formulas cannot be applied straightforwardly.
Although we focus on the Gaussian SRD problems in this paper, we note that the standard RD and SRD problems for general sources and distortion measures in abstract (Polish) spaces are discussed in  \cite{rezaei2006rate} and\cite{charalambous2014nonanticipative}, respectively.

}

This paper is organized as follows. In Section \ref{secformulation}, we formally introduce the Gaussian SRD problem, which is the main problem considered in this paper. In Section  \ref{secequivalence}, we show that the Gaussian SRD problem is equivalent to what we call the linear-Gaussian sensor design problem, which formally establishes the sensor-estimator separation principle.  Then, in Section \ref{secsdp}, we show that the linear-Gaussian sensor design problem can be reduced to an SDP problem, which thus provides us an SDP-based solution synthesis procedure for Gaussian SRD problems. Extensions to stationary and infinite horizon problems are given in Section \ref{secstationary}. In Section \ref{secapplications}, we consider applications of SRD theory to real-time communication systems and networked control systems.
Simple simulation results will be presented in Section \ref{secexample}. We conclude in Section \ref{secconclusion}.

{\bf Notation:} Let $\mathcal{X}$ be an Euclidean space, and $ \mathcal{B_X}$ be the Borel $\sigma$-algebra on $\mathcal{X}$. 
Let $(\Omega, \mathcal{F}, \mathcal{P})$ be a probability space, and $\bx: (\Omega, \mathcal{F})\rightarrow (\mathcal{X}, \mathcal{B_X})$ be a random variable. Throughout the paper, we use lower case boldface symbols such as $\bx$ to denote random variables, while $x\in \mathcal{X}$ is a realization of $\bx$.
We denote by $q_\bx$ the probability measure of $\bx$ defined by $q_\bx(A)=\mathcal{P}(\{\omega: \bx(\omega)\in A\})$ for every $A\in \mathcal{B_X}$. When no confusion occurs, this measure will be also denoted by $q_\bx(x)$ or $q(x)$.
For a Borel measurable function $f: \mathcal{X}\rightarrow \mathbb{R}$, we write $\mathbb{E}f(\bx)\triangleq \int f(x)q_\bx(dx)$.
For a random vector, we write $\bx^t\triangleq(\bx_0,\cdots, \bx_t)$ or $\bx^t\triangleq(\bx_1,\cdots, \bx_t)$ depending on the initial index, and $\bx_s^t\triangleq(\bx_s,\cdots, \bx_t)$.
Let $\Theta$ be a real symmetric matrix of size $n\times n$. Notations $\Theta \succ 0$ or $\Theta \in \mathbb{S}_{++}^n$ (resp. $\Theta \succeq 0$ or $\Theta \in \mathbb{S}_{+}^n$) mean that  $\Theta$ is a positive definite (resp. positive semidefinite) matrix. For a positive semidefinite matrix $\Theta$, we write $\|x\|_\Theta \triangleq \sqrt{x^\top \Theta x}$.

\section{Problem Formulation}
\label{secformulation}
We begin our discussion with an estimation-theoretic interpretation of a simple rate-distortion trade-off problem. Recall that a rate-distortion problem for a scalar Gaussian random variable $\bx\sim\mathcal{N}(0,1)$ with the mean square distortion constraint is an optimization problem of the following form:
\begin{align}
\min & \;\; I(\bx;\bz) \label{scalarrd}\\
\text{s.t. } & \;\; \mathbb{E}(\bx-\bz)^2 \leq D. \nonumber
\end{align}
Here, $\bz$ is a reproduction of the source $\bx$, and $I(\bx;\bz)$ denotes the mutual information between $\bx$ and $\bz$.
{\upd The minimization is over the space of reproduction policies, i.e., stochastic kernels $q(dz|x)$. 
The optimal value of (\ref{scalarrd}) is known as the rate-distortion function, $R(D)$, and can be explicitly obtained \cite{CoverThomas} as}
\[
R(D)=\max \left\{0, \frac{1}{2}\log\left(\frac{1}{D}\right)\right\}.
\]
{\upd It is also possible to write the optimal reproduction policy $q(dz|x)$  explicitly. To this end, consider a linear sensor} 
\begin{equation}
\label{scalarchannel}
\by=c\bx+\bv
\end{equation}
where $\bv\sim\mathcal{N}(0, \sigma^2)$ is a Gaussian noise independent of $\bx$.
{\upd Also, let 
\begin{equation}
\label{scalarmmse}
\bz=\mathbb{E}(\bx|\by)
\end{equation}
be the least mean square error estimator of $\bx$ given $\by$.
Notice that the right hand side of (\ref{scalarmmse}) is given by $\frac{c}{c^2+\sigma^2}\by$.
Then, it can be shown that an optimal solution $q(dz|x)$ to (\ref{scalarrd}) is a composition of (\ref{scalarchannel}) and (\ref{scalarmmse}), provided that the signal-to-noise ratio of the sensor (\ref{scalarchannel}) is chosen to be}
\begin{equation}
\label{eqscalarsnr}
\mathsf{SNR}\triangleq \frac{c^2}{\sigma^2}=\max \left\{0,\frac{1}{D}-1\right\}.
\end{equation}
{\upd This gives us the following notable observations:}
\begin{itemize}[leftmargin=3ex]
\item {\bf Fact 1:} A ``sensor-estimator separation principle'' holds for the Gaussian rate-distortion problem  (\ref{scalarrd}), in the sense that {\upd an optimal reproduction policy $q(dz|x)$ can be written as a two-stage mechanism with a linear sensor mechanism (\ref{scalarchannel}) and a least mean square error estimator (\ref{scalarmmse}).}
\item {\bf Fact 2:} The original infinite dimensional optimization problem (\ref{scalarrd}) with respect to $q(dz|x)$ is reduced to a simple optimization problem in terms of a scalar parameter $\textsf{SNR}$. Moreover, for a given  $D>0$, the optimal choice of $\textsf{SNR}$ is given by a closed-form expression (\ref{eqscalarsnr}).
\end{itemize}
These facts can be significantly generalized, and serve as a guideline to develop a solution synthesis for Gaussian SRD problems in this paper.

\subsection{Gaussian SRD problem}
\label{secpsrdformulation}
The Gaussian SRD problem can be viewed as a generalization of (\ref{scalarrd}). Let $\{\bx_t\}$ be an $\mathbb{R}^{n_t}$-valued Gauss-Markov process
\begin{equation}
\label{gmprocess}
\bx_{t+1}=A_t \bx_t+\bw_t, \;\; t=0, 1, \cdots, T-1
\end{equation}
where {\upd  $\bx_0\sim\mathcal{N}(0, P_0), P_0\succ 0$ and $\bw_t\sim \mathcal{N}(0, W_t), W_t \succ 0$  for $t=0,1,\cdots, T-1$ are mutually independet Gaussian random variables.}
The Gaussian SRD problem is formulated as
\begin{subequations}
\begin{align}
\text{{\bf (P-SRD): }}\hspace{1ex} 
\min_{\gamma\in\Gamma} & \;\; I(\bx^T \rightarrow \bz^T) \label{psrd1}\\
\text{s.t. } & \;\;  \mathbb{E}\|\bx_t-\bz_t\|_{\Theta_t}^2 \leq D_t \label{psrd2}
\end{align}
\end{subequations}
where (\ref{psrd2}) is imposed for every $t = 1,\cdots,T$.
Here, $\{\bz_t\}$ is an $\mathbb{R}^{n_t}$-valued reproduction of $\{\bx_t\}$.
The minimization (\ref{psrd1}) is over the space $\Gamma$ of zero-delay reproduction policies of $\bz_t$ given $\bx^t$ and $\bz^{t-1}$, i.e., the sequences of causal stochastic kernels\footnote{See Appendix \ref{appmathprelim} for a formal description of causal stochastic kernels.} $\gamma=\otimes_{t=1}^T q(dz_t|x^t,z^{t-1})$. 
The term $I(\bx^T\rightarrow \bz^T)$ is known as \emph{directed information}, 
introduced by Massey \cite{massey1990causality} following Marko's earlier work \cite{marko1973bidirectional}, and is defined by
\begin{equation}
\label{eqdirectedinfo}
I(\bx^T\rightarrow \bz^T)\triangleq \sum_{t=1}^T I(\bx^t;\bz_t|\bz^{t-1}).
\end{equation}
 The Gaussian SRD problem is visualized in Fig. \ref{fig:SRD}.
\begin{remark}
\label{remark_directedinfo}
Directed information measures the amount of information flow from $\{\bx_t\}$ to $\{\bz_t\}$ and is not symmetric, i.e., $I(\bx^T\rightarrow \bz^T)\neq I(\bz^T\rightarrow \bx^T)$ in general. However, when the process $\{\bz_t\}$ is causally dependent on $\{\bx_t\}$ and $\{\bx_t\}$ is not affected by $\{\bz_t\}$, it can be shown \cite{massey2005conservation} that
$I(\bx^T\rightarrow \bz^T)=I(\bx^T; \bz^T)$.
By definition of our source process (\ref{gmprocess}), there is no information feedback from $\{\bz_t\}$ to $\{\bx_t\}$, and thus $I(\bx^T\rightarrow \bz^T)=I(\bx^T; \bz^T)$ holds in our setup. Hence, $I(\bx^T;\bz^T)$ can be equivalently used as an objective in (P-SRD). However, we choose to use $I(\bx^T\rightarrow \bz^T)$ for the future considerations (e.g., \cite{tanaka2015sdp}) in which $\{\bx_t\}$ is a controlled stochastic process and is dependent on $\{\bz_t\}$. In such cases, $I(\bx^T;\bz^T)$ and $I(\bx^T\rightarrow \bz^T)$ are not equal, and the latter is a more meaningful quantity in many applications.
\end{remark}

Since (P-SRD) is an infinite dimensional optimization problem, it is difficult to apply numerical methods directly. Hence, we first need to develop a structural understanding of its solution.
{\upd It turns out that the sensor-estimator separation principle still holds for (P-SRD), and this observation plays an important role in the subsequent sections. }
We are going to establish the following facts:
\begin{itemize}[leftmargin=3ex]
\item {\bf Fact 1':} A sensor-estimator separation principle holds for the Gaussian SRD problem. That is, an optimal policy $\otimes_{t=1}^T q(dz_t|x^t,z^{t-1})$ for (P-SRD) can be realized as a composition of a sensor mechanism
\begin{equation}
\label{srdchannel}
\by_t=C_t\bx_t+\bv_t, \;\; t=1, 2, \cdots, T
\end{equation}
where {\upd $\bv_t\sim\mathcal{N}(0,V_t), V_t\succ 0$ are mutually independent Gaussian random variables}, and the least mean square error estimator (Kalman filter)
\begin{equation}
\label{srdkf}
\bz_t=\mathbb{E}(\bx_t|\by^t), \;\; t=1, 2, \cdots, T.
\end{equation}
\item {\bf Fact 2':} The original optimization problem (P-SRD) over an infinite-dimensional space $\Gamma$ is reduced to an optimization problem over a finite-dimensional space of \emph{matrix-valued} signal-to-noise ratios of the sensor (\ref{srdchannel}), defined by
\begin{equation}
\label{matrixsnr}
\mathsf{SNR}_t\triangleq C_t^\top V_t^{-1} C_t \succeq 0, \;\; t=1, 2, \cdots, T.
\end{equation}
Moreover, the optimal $\{\mathsf{SNR}_t\}_{t=1}^T$, which depends on $D_t>0, t=1,\cdots,T$, can be obtained by SDP.
\end{itemize}
{\upd Unlike (\ref{eqscalarsnr}), an analytical expression of the optimal $\{\mathsf{SNR}_t\}_{t=1}^T$ may not be available.
Nevertheless, we will show that they can be easily obtained by SDP.}

\begin{figure}[t]
    \centering
    \ifdefined\FIGDOUBLECOL \includegraphics[width=\columnwidth]{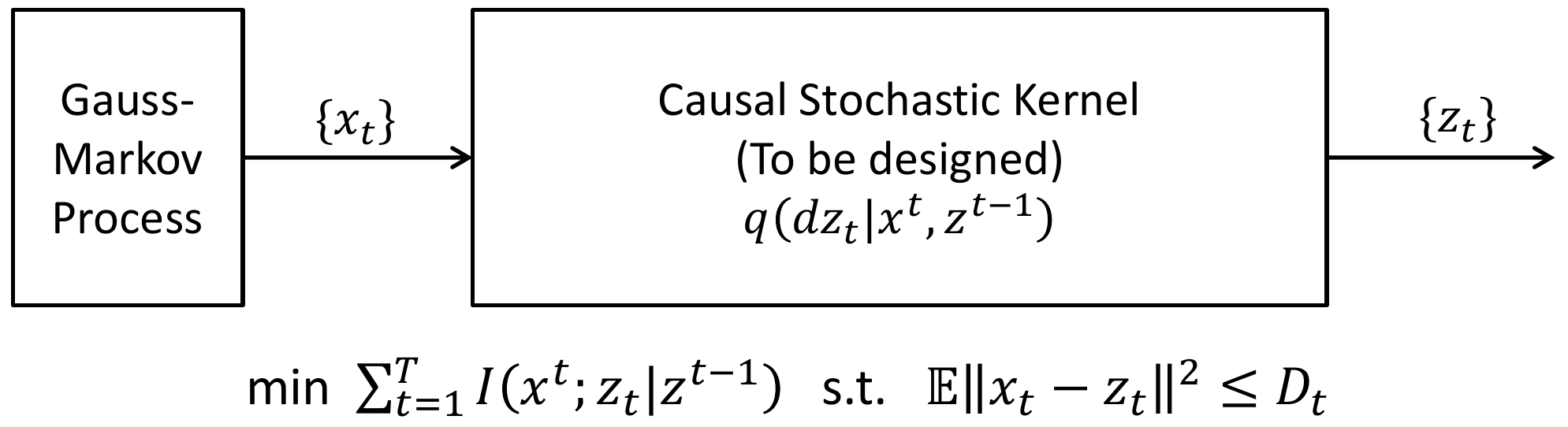} \fi
    \ifdefined\FIGSINGLECOL \includegraphics[width=.6\columnwidth]{SRD.pdf} \fi
    \caption{The Gaussian sequential rate-distortion problem (P-SRD).}
    \label{fig:SRD}
\vspace{2ex}
\end{figure}
\begin{figure}[t]
    \centering
    \ifdefined\FIGDOUBLECOL \includegraphics[width=\columnwidth]{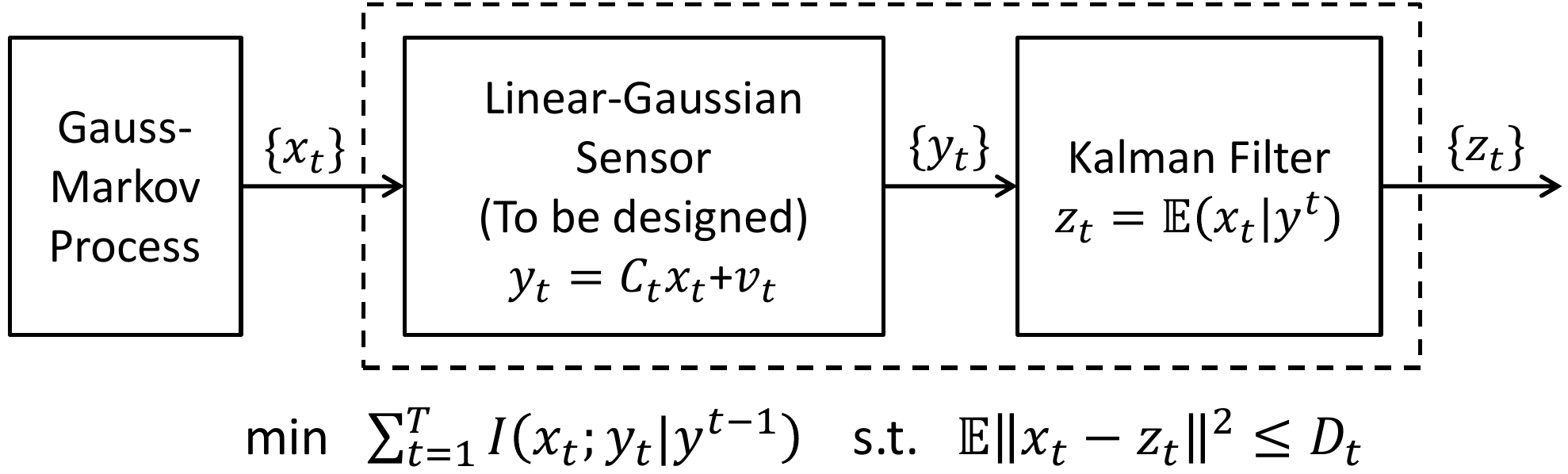} \fi
    \ifdefined\FIGSINGLECOL \includegraphics[width=.6\columnwidth]{LGS.pdf} \fi
    \caption{The linear-Gaussian sensor design problem (P-LGS).}
    \label{fig:LGS}
\vspace{2ex}
\end{figure}

\subsection{Linear-Gaussian sensor design problem}
{\upd In Section~\ref{secequivalence}, we establish the sensor-estimator separation principle. To this end, we show that (P-SRD) is equivalent to what we call the \emph{linear-Gaussian sensor design problem} (P-LGS) visualized in Fig.~\ref{fig:LGS}. 
Formally, (P-LGS) is formulated as}
\begin{subequations}
\begin{align}
\text{{\bf (P-LGS): }}\hspace{1ex} 
\min_{\gamma\in\Gamma_{\text{LGS}}} & \;\; \sum_{t=1}^T I(\bx_t;\by_t|\by^{t-1}) \label{plgs1}\\
\text{s.t. } & \;\; \mathbb{E}\|\bx_t-\bz_t\|_{\Theta_t}^2 \leq D_t \label{plgs2} 
\end{align}
\end{subequations}
where (\ref{plgs2}) is imposed for every $t=1,\cdots,T$.
We assume that $\{\by_t\}$ is produced by a linear-Gaussian sensor (\ref{srdchannel}), and $\{\bz_t\}$ is produced by  the Kalman filter (\ref{srdkf}). 
In other words, the optimization domain $\Gamma_{\text{LGS}}\subset \Gamma$ is the space of causal stochastic kernels with a separation structure (\ref{srdchannel}) and (\ref{srdkf}), which is parameterized by a sequence of matrices $\{C_t, V_t\}_{t=1}^T$.
Intuitively, $I(\bx_t;\by_t|\by^{t-1})$ in (\ref{plgs1}) can be understood as the amount of information acquired by the sensor (\ref{srdchannel}) at time $t$.
We call this problem a ``sensor design problem" because our focus is on choosing an optimal sensing gain $C_t$ in (\ref{srdchannel}) and the noise covariance $V_t$.
Notice that perfect observation with $C_t=I$ and $V_t=0$ is trivially the best to minimize the estimation error in (\ref{plgs2}) (in fact, $\mathbb{E}\|\bx_t-\bz_t\|_{\Theta_t}^2=0$ is achieved), but it incurs significant information cost (i.e., $I(\bx_t;\by_t|\by^{t-1})=+\infty$), and hence it is not an optimal solution to (P-LGS).
\begin{remark}
\label{remark1}
In (P-LGS), we search for the optimal $C_t\in \mathbb{R}^{r_t \times n}$ and $V_t \in \mathbb{S}_{++}^{r_t}$. However, the sensor dimension $r_t$ is not given \emph{a priori}, and choosing it optimally is part of the problem. In particular, if making no observation is the optimal sensing at some specific time instance $t$, we should be able to recover $r_t=0$ as an optimal solution.
\end{remark}

{\upd Although the objective functions (\ref{psrd1}) and (\ref{plgs1}) appear differently, it will be shown in Section~\ref{secequivalence} that they coincide in the domain $\Gamma_{\text{LGS}}$. Moreover, in the same section it will be shown that an optimal solution to (P-SRD) can always be found in the domain $\Gamma_{\text{LGS}}$. These observations imply that one can obtain an optimal solution to (P-SRD) by solving  (P-LGS). }

\subsection{Stationary cases}
We will also consider a time-invariant system
\begin{equation}
\label{gmprocess_stat}
\bx_{t+1}=A \bx_t+\bw_t,  t=0, 1, 2, \cdots 
\end{equation}
where $\bx_t$ is an $\mathbb{R}^{n}$-valued random variable with $\bx_0 \sim \mathcal{N}(0, P_0)$,
and $\bw_t\sim \mathcal{N}(0, W)$ is a stationary white Gaussian noise.
We assume $P_0\succ 0$ and $W\succ 0$.
Stationary and infinite horizon version of the Gaussian SRD problem is formulated as
\begin{subequations}
\label{statsrd}
\begin{align}
\min & \;\; \limsup_{T\rightarrow \infty} \frac{1}{T} I(\bx^T\rightarrow \bz^T) \\
\text{s.t.}  & \;\; \limsup_{T\rightarrow \infty} \frac{1}{T}\sum_{t=1}^T \mathbb{E}\|\bx_t-\bz_t\|_{\Theta}^2 \leq D.
\end{align}
\end{subequations}
This is an optimization over the sequence of stochastic kernels $\otimes_{t\in\mathbb{N}} \;q(dz_t|x^t, z^{t-1})$.
The optimal value of (\ref{statsrd}) as a function of the average distortion $D$ is referred to as the \emph{sequential rate-distortion function}, and is denoted by $R_{\text{SRD}}(D)$. 

Similarly, a stationary and infinite horizon version of the linear-Gaussian sensor design problem is formulated as
\begin{subequations}
\label{p2hardconststat}
\begin{align}
\min & \;\;  \limsup_{T\rightarrow \infty}\frac{1}{T} \sum_{t=1}^T  I(\bx_t;\by_t|\by^{t-1}) \\
\text{s.t.} & \;\;  \limsup_{t\rightarrow \infty}\frac{1}{T}\sum_{t=1}^T \mathbb{E}\|\bx_t-\bz_t\|_{\Theta}^2 \leq D.
\end{align}
\end{subequations} 
Here, we assume $\by_t=C_t\bx_t+\bv_t$ where $\bv_t \sim \mathcal{N}(0, V_t), V_t \succ 0$ is a mutually independent Gaussian stochastic process and $\bz_t = \mathbb{E}(\bx_t|\by^t)$.
Design variables in (\ref{p2hardconststat}) are $\{C_t, V_t\}_{t\in\mathbb{N}}$. Again, determining their dimensions is part of the problem.

\subsection{Soft- vs. hard-constrained problems}
Introducing Lagrange multipliers $\alpha_t > 0$, one can also consider a soft-constrained version of (P-SRD):
\begin{equation}
\label{srdsoft}
\min \;\; I(\bx^T\rightarrow \bz^T)+\frac{\alpha_t}{2}\mathbb{E}\|\bx_t-\bz_t\|_{\Theta_t}^2
\end{equation}
Similarly to the Lagrange multiplier theorem (e.g., Proposition 3.1.1 in \cite{bertsekas1995}), it is possible to show that there exists a set of multipliers such that an optimal solution to  (\ref{srdsoft}) is also an optimal solution to (P-SRD). We will prove this fact in Section \ref{secsdp} after we establish that both (P-SRD) and  (\ref{srdsoft}) can be transformed as finite dimensional convex optimization problems.
For this reason, we refer to both (P-SRD) and (\ref{srdsoft}) as Gaussian SRD problems.

\section{Sensor-estimator separation principle}
\label{secequivalence}
Let $f^*_{\text{SRD}}$ and $f^*_{\text{LGS}}$ be the optimal values of (P-SRD) and (P-LGS) respectively. 
In this section, we show that $f^*_{\text{SRD}}=f^*_{\text{LGS}}$,  and an optimal solution $\gamma\in\Gamma_{\text{LGS}}$ to (P-LGS) is also an optimal solution to (P-SRD).
This result  establishes the sensor-estimator separation principle (Fact 1').
We introduce another optimization problem (P-1), which serves as an intermediate step to establish this fact.
\begin{align*}
\text{\bf (P-1):}\hspace{2ex}  & \min_{\gamma\in\Gamma_1} \;\; \sum_{t=1}^T  I (\bx_t; \bz_t|\bz^{t-1}) \\
& \text{ s.t. } \;\; \mathbb{E}\|\bx_t-\bz_t\|_{\Theta_t}^2\leq D_t.
\end{align*}
The optimization is over the space $\Gamma_1$ of linear-Gaussian stochastic kernels $\gamma=\otimes_{t=1}^T \; q(dz_t|x_t, z^{t-1})$, where each stochastic kernel $q(dz_t|x_t, z^{t-1})$ is of the form
\begin{equation}
\label{pi3lin}
\bz_t=E_t \bx_t + F_{t,t-1} \bz_{t-1} + \cdots + F_{t,1} \bz_1 + \bg_t
\end{equation}
where $E_t, F_{t,t-1}, \cdots, F_{t,1}$ are some matrices with appropriate dimensions, and $\bg_t$ is a zero-mean, possibly degenerate Gaussian random variable that is independent of $\bx_0, \bw^t, \bg^{t-1}$. 
Notice that $\Gamma_{\text{LGS}}\subset \Gamma_1 \subset \Gamma$.
The underlying Gauss-Markov process $\{\bx_t\}$ is defined by (\ref{gmprocess}). Let $f_1^*$ be the optimal value of (P-1). The next lemma claims the equivalence between (P-SRD) and (P-1).
\begin{lemma}
\label{lemstep1}
\begin{itemize}[leftmargin=3ex]
\item[(i)]  If there exists $\gamma\in\Gamma$ attaining a value $f_{\text{SRD}}<+\infty$ of the objective function in (P-SRD), then there  exists $\gamma_1\in\Gamma_1$ attaining a value $f_1 \leq f_{\text{SRD}}$ of the objective function in (P-1).
\item[(ii)]  Every $\gamma_1\in\Gamma_1(\subset \Gamma)$ attaining $f_1<+\infty$ in (P-1) also attains $f_{\text{SRD}}=f_1$ in (P-SRD).
\end{itemize}
\end{lemma}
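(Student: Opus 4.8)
The plan is to split the equivalence into a purely information-theoretic step (handled by the chain rule for mutual information) and a Gaussianization step (handled by a maximum-entropy bound), and to notice that a single chain-rule identity drives both directions. For any reproduction process, decomposing $\bx^t=(\bx_t,\bx^{t-1})$ gives
\[
I(\bx^t;\bz_t|\bz^{t-1})=I(\bx_t;\bz_t|\bz^{t-1})+I(\bx^{t-1};\bz_t|\bx_t,\bz^{t-1}),
\]
and the residual term $I(\bx^{t-1};\bz_t|\bx_t,\bz^{t-1})\ge 0$ is the bridge: it vanishes on $\Gamma_1$ and is merely nonnegative on $\Gamma$.

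For part (ii), which is the easier direction, I would exploit the Markov structure built into $\Gamma_1$. For any $\gamma_1\in\Gamma_1$ the kernel $q(dz_t|x_t,z^{t-1})$ makes $\bz_t$ conditionally independent of $\bx^{t-1}$ given $(\bx_t,\bz^{t-1})$, so the residual term above is zero for every $t$. Summing shows that the (P-SRD) objective $I(\bx^T\rightarrow\bz^T)=\sum_t I(\bx^t;\bz_t|\bz^{t-1})$ coincides term-by-term with the (P-1) objective $\sum_t I(\bx_t;\bz_t|\bz^{t-1})$ on $\Gamma_1$. Since the distortion constraints in the two problems are literally the same, any $\gamma_1\in\Gamma_1$ attaining $f_1<+\infty$ attains exactly $f_{\text{SRD}}=f_1$ in (P-SRD).

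For part (i), the same identity now yields only an inequality: for an arbitrary $\gamma\in\Gamma$,
\[
\sum_{t=1}^T I(\bx_t;\bz_t|\bz^{t-1})\le\sum_{t=1}^T I(\bx^t;\bz_t|\bz^{t-1})=f_{\text{SRD}}.
\]
This controls a (P-1)-type objective of the given joint law, but that law need not lie in $\Gamma_1$, so the second step is to replace the reproduction by a linear-Gaussian one without increasing any term and without enlarging the distortion. I would build $\tilde\bz_t$ recursively so that, given $\tilde\bz^{t-1}$, it is a linear function of $(\bx_t,\tilde\bz^{t-1})$ plus an independent Gaussian term whose covariance matches the appropriate (averaged) conditional error covariance; this makes $(\bx^t,\tilde\bz^t)$ jointly Gaussian, puts $\tilde\bz_t$ in the form (\ref{pi3lin}) required by $\Gamma_1$, and preserves the second-order statistics governing $\mathbb{E}\|\bx_t-\tilde\bz_t\|_{\Theta_t}^2$. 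Because the source is Gaussian, a maximum-entropy argument (concavity of $\log\det$ plus Jensen, comparing against the least-mean-square error covariance) shows that among reproductions with a prescribed conditional error covariance the jointly Gaussian one minimizes $I(\bx_t;\cdot|\cdot)$, giving $I(\bx_t;\tilde\bz_t|\tilde\bz^{t-1})\le I(\bx_t;\bz_t|\bz^{t-1})$ for each $t$. Chaining the three inequalities produces a $\gamma_1\in\Gamma_1$ with $f_1\le f_{\text{SRD}}$.

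The main obstacle is this causal Gaussianization. Since $\gamma$ is not Gaussian, conditioning on a realization of $\bz^{t-1}$ perturbs the law of $\bx_t$ in a realization-dependent way, so the entropy comparison cannot be applied pointwise in $z^{t-1}$; I expect the delicate point to be setting up the recursion so that the constructed Gaussian process matches the relevant \emph{averaged} conditional covariances, thereby simultaneously preserving the distortion and retaining the linear structure of $\Gamma_1$, while the entropy bound is invoked on these averaged quantities. Finiteness of the second moments—guaranteed by $f_{\text{SRD}}<+\infty$ together with the distortion bound, modulo the kernel of $\Theta_t$, on which the reproduction can be set to zero—would also need to be verified, but I regard this as routine.
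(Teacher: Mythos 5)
Your part (ii) is correct and is exactly the paper's argument: for kernels in $\Gamma_1$, the chain--rule residual $I(\bx^{t-1};\bz_t|\bx_t,\bz^{t-1})$ vanishes because $\bz_t$ -- $(\bx_t,\bz^{t-1})$ -- $\bx^{t-1}$ is a Markov chain, so the two objectives agree term by term. The gap is in part (i), and it lies precisely in the order of your two reductions. You first drop the history under the original non-Gaussian law $q$, obtaining $\sum_t I_q(\bx_t;\bz_t|\bz^{t-1})\le f_{\text{SRD}}$, and then try to Gaussianize \emph{each conditional term}. That per-term inequality is not merely delicate; it is false in general. Writing both sides with Radon--Nikodym derivatives, one finds
\begin{equation*}
I_q(\bx_t;\bz_t|\bz^{t-1})-I_r(\bx_t;\bz_t|\bz^{t-1})
=\mathbb{E}_q\, D_{\text{KL}}\!\left(q_{\bx_t|\bz^t}\,\|\,r_{\bx_t|\bz^t}\right)
-\mathbb{E}_q\, D_{\text{KL}}\!\left(q_{\bx_t|\bz^{t-1}}\,\|\,r_{\bx_t|\bz^{t-1}}\right),
\end{equation*}
a \emph{difference} of two nonnegative quantities, because the conditional law $q(dx_t|z^{t-1})$ is itself non-Gaussian; the maximum-entropy bound raises both $h(\bx_t|\bz^{t-1})$ and $h(\bx_t|\bz^t)$ and the two effects compete. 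A concrete failure: take $\bx_2=\bx_1\sim\mathcal{N}(0,1)$ (noise negligible), $\bz_1=|\bx_1|+\bm$ with small noise $\bm$, and $\bz_2=\bx_2+\bn$ with noise variance $\sigma^2\to 0$. Since $\mathrm{Cov}(\bx_1,\bz_1)=0$, the Gaussianized law makes $\bz_1$ useless and $I_r(\bx_2;\bz_2|\bz_1)\approx\frac12\log(1/\sigma^2)$, whereas under $q$ the first message already pins down $\bx_2$ up to sign and $I_q(\bx_2;\bz_2|\bz_1)$ is smaller by roughly $\log(1/\epsilon)$; all quantities are finite, so your claimed inequality fails at $t=2$.

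The repair is to perform the two steps in the opposite order, which is what the paper does. First Gaussianize the \emph{whole-trajectory} mutual information: $I_q(\bx^T;\bz^T)\ge I_r(\bx^T;\bz^T)$, where $r$ is the jointly Gaussian measure with the same covariance as $q$. Here the comparison is a single nonnegative KL term, because the source marginals coincide exactly, $q(dx^T)=r(dx^T)$, and $\log\frac{dr(x^T|z^T)}{dr(x^T)}$ is quadratic in $(x^T,z^T)$, so its integral against $q$ equals its integral against $r$ (matching second moments). Only then drop the history \emph{under the Gaussian law}: $I_r(\bx^T;\bz_t|\bz^{t-1})\ge I_r(\bx_t;\bz_t|\bz^{t-1})$. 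Finally, extract the linear-Gaussian kernels $s(dz_t|x_t,z^{t-1})$ from $r$ and compose them with the true source; the paper's two auxiliary propositions (Gaussianization preserves the Markov chain $\bz^{t-1}$ -- $\bx_{t-1}$ -- $\bx_t$, and by induction the marginals satisfy $s(dx_t,dz^t)=r(dx_t,dz^t)$) are exactly what is needed to conclude that the information cost and the distortion of this composed policy equal those computed under $r$. Your recursive construction would produce essentially this same policy, but its cost must be bounded through $I_q(\bx^T;\bz^T)$, not through $\sum_t I_q(\bx_t;\bz_t|\bz^{t-1})$; the ordering is not cosmetic but is what makes the Gaussian comparison a one-sided inequality.
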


Lemma~\ref{lemstep1} is the most significant result in this section, which essentially guarantees the linearity of an optimal solution to the Gaussian SRD problems.
{\upd The proof of Lemma~\ref{lemstep1}  can be found in Appendix \ref{secprooflemstep1}. 
The basic idea of proof relies on the well-known fact that Gaussian distribution maximizes entropy when covariance is fixed. This proposition appears as Lemma 4.3 in \cite{tatikonda2004}, but we modified the proof using the Radon-Nikodym derivatives so that the proof does not require the existence of probability density functions.}
The next lemma establishes the equivalence between (P-1) and  (P-LGS).

\begin{lemma}
\label{lemstep2}
\begin{itemize}[leftmargin=3ex]
\item[(i)]  If there exists $\gamma_1\in\Gamma_1$ attaining a value $f_1<+\infty$ of the objective function in (P-1), then there exists $\gamma_{\text{LGS}}\in\Gamma_{\text{LGS}}$ attaining a value $f_{\text{LGS}}\leq f_1$ of the objective function in \mbox{(P-LGS)}.
\item[(ii)]  Every $\gamma_{\text{LGS}}\in\Gamma_{\text{LGS}}(\subset \Gamma_1)$ attaining $f_{\text{LGS}}<+\infty$ in \mbox{(P-LGS)} also attains $f_1\leq f_{\text{LGS}}$ in (P-1).
\end{itemize}
\end{lemma}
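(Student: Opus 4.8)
The plan is to prove both inclusions by reducing each objective functional to a single expression in terms of the one-step prior and posterior error covariances of $\bx_t$, thereby showing that the corresponding policies in fact attain \emph{equal} objective values (so the stated inequalities hold, with equality). I would treat part (ii) first, since $\Gamma_{\text{LGS}}\subset\Gamma_1$ already holds.

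For part (ii), fix $\gamma_{\text{LGS}}\in\Gamma_{\text{LGS}}$ specified by a memoryless sensor (\ref{srdchannel}) and the Kalman filter (\ref{srdkf}). I would first confirm membership in $\Gamma_1$ by writing the one-step filter recursion $\bz_t=(I-K_tC_t)A_{t-1}\bz_{t-1}+K_tC_t\bx_t+K_t\bv_t$, which has precisely the form (\ref{pi3lin}) with $E_t=K_tC_t$, a single feedback term, and innovation $\bg_t=K_t\bv_t$ independent of the past. To evaluate the (P-1) objective on this policy I would invoke the orthogonality principle: because $\bz_t=\mathbb{E}(\bx_t|\by^t)$ is a sufficient statistic, the errors $\bx_t-A_{t-1}\bz_{t-1}$ and $\bx_t-\bz_t$ are independent of $\by^{t-1}$ and $\by^t$, while $\bz^{t-1},\bz^t$ are deterministic functions of $\by^{t-1},\by^t$. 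Hence the conditional laws of $\bx_t$ given $\by^{t-1}$ and given $\bz^{t-1}$ are both Gaussian with a common deterministic covariance $P_{t|t-1}:=\mathrm{Cov}(\bx_t|\by^{t-1})$, and similarly $\mathrm{Cov}(\bx_t|\by^t)=\mathrm{Cov}(\bx_t|\bz^t)=:P_{t|t}$. Both conditional mutual informations then equal $\tfrac12\log(\det P_{t|t-1}/\det P_{t|t})$, giving $f_1=f_{\text{LGS}}$.

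For part (i), given $\gamma_1\in\Gamma_1$ with $f_1<+\infty$, I would set $P_{t|t-1}:=\mathrm{Cov}(\bx_t|\bz^{t-1})$ and $P_{t|t}:=\mathrm{Cov}(\bx_t|\bz^t)$, both deterministic by joint Gaussianity; finiteness of $I(\bx_t;\bz_t|\bz^{t-1})=\tfrac12\log(\det P_{t|t-1}/\det P_{t|t})$ forces $P_{t|t}\succ0$. Using (\ref{gmprocess}) and the independence of $\bw_{t-1}$ from $\bz^{t-1}$ I would derive the predictor identity $P_{t|t-1}=A_{t-1}P_{t-1|t-1}A_{t-1}^\top+W_{t-1}$, matching the Kalman prediction step. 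I would then \emph{reconstruct a sensor} by factoring the matrix $M_t:=P_{t|t}^{-1}-P_{t|t-1}^{-1}$, which is positive semidefinite because $P_{t|t}\preceq P_{t|t-1}$, as $M_t=C_t^\top V_t^{-1}C_t$ via an eigendecomposition $M_t=U_t\Lambda_t U_t^\top$ with $C_t=\Lambda_t^{1/2}U_t^\top$, $V_t=I_{r_t}$, $r_t=\operatorname{rank}M_t$; when $M_t=0$ this yields $r_t=0$, the ``no observation'' case of Remark~\ref{remark1}. An induction then shows that matching posteriors at $t-1$ matches priors at $t$, so the Kalman update $P_{t|t}^{-1}=P_{t|t-1}^{-1}+\mathsf{SNR}_t=P_{t|t-1}^{-1}+M_t$ reproduces $P_{t|t}$ exactly. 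Since the conditional mean minimizes the weighted mean-square error, the resulting estimate attains distortion $\operatorname{tr}(\Theta_tP_{t|t})$, no larger than that of $\gamma_1$, so (\ref{plgs2}) is preserved; and by the identity from part (ii) the objective again equals $\sum_t\tfrac12\log(\det P_{t|t-1}/\det P_{t|t})=f_1$, giving $f_{\text{LGS}}\le f_1$.

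The hard part will be the construction in part (i): the essential realization is that one must reproduce the \emph{sequence of conditional covariances}, not the realized estimates $\bz_t$, and that this is always achievable by a memoryless sensor because the required per-step information gain $M_t=P_{t|t}^{-1}-P_{t|t-1}^{-1}$ is positive semidefinite and thus factorable as (\ref{matrixsnr}). The inductive propagation of covariance matching through the dynamics (\ref{gmprocess}), together with careful handling of degenerate directions by allowing $r_t<n$, is the technical backbone of the argument.
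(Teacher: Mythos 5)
Your proof is correct, but it takes a genuinely different route from the paper's, most visibly in part (i). The paper works structurally on the given kernel (\ref{pi3lin}): it takes an SVD of $\mathbb{E}\bg_t\bg_t^\top$ to split $\bz_t$ into a nondegenerate part $\tilde{\bz}_t$ and a noiseless part $\hat{\bz}_t$, proves that finiteness of $f_1$ forces the noiseless part to carry no fresh dependence on $\bx_t$ (i.e.\ $\hat{E}_t=0$), and then literally reuses the kernel's own gain and noise as the sensor, $\by_t=\tilde{E}_t\bx_t+\tilde{\bg}_t$, establishing $I(\bx_t;\by_t|\by^{t-1})=I(\bx_t;\bz_t|\bz^{t-1})$ through an invertible lower-triangular map between $\by^t$ and $\tilde{\bz}^t$ and handling distortion via the $\sigma$-algebra identity $\sigma(\by^t)=\sigma(\bz^t)$. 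You instead work purely at the level of second moments: you record the conditional covariance sequence $\{P_{t|t-1},P_{t|t}\}$ induced by $\gamma_1$, observe $M_t=P_{t|t}^{-1}-P_{t|t-1}^{-1}\succeq 0$, factor it as $C_t^\top V_t^{-1}C_t$, and synthesize a \emph{fresh} sensor whose Kalman recursion replicates that covariance sequence by induction. This is exactly the reconstruction formula (\ref{cvconst})/(\ref{matrixsnr}) that the paper only introduces later in the SDP section, so your argument has the virtue of making Lemma~\ref{lemstep2} and Theorem~\ref{propsdp} use one and the same mechanism; it also delivers equality $f_{\text{LGS}}=f_1$ rather than just the stated inequality. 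Likewise in part (ii), where the paper combines the data-processing inequality with causal invertibility of the Kalman filter (the innovations argument of \cite{kailath1968innovations}), you use the sufficient-statistic property of the Kalman estimate --- the conditional law of $\bx_t$ given $\by^{t-1}$ (resp.\ $\by^t$) is measurable with respect to $\bz_{t-1}$ (resp.\ $\bz_t$), so conditioning on the coarser $\sigma$-algebra changes nothing --- which again yields equality of the two objectives without invoking invertibility. The trade-off: the paper's construction exposes structural facts (which components of a kernel are informative, why noiseless observation components are forbidden at finite rate) that your covariance bookkeeping leaves implicit, while your route is shorter, matches the SNR parametrization used downstream, and sidesteps the causal-invertibility citation. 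One point to make explicit if you write this up: the identity $I(\bx_t;\bz_t|\bz^{t-1})=\tfrac12\log(\det P_{t|t-1}/\det P_{t|t})$ for a general $\gamma_1\in\Gamma_1$ needs $P_{t|t-1}\succ 0$ (guaranteed by $W_{t-1}\succ 0$) and $P_{t|t}\succ 0$ (forced by finiteness of each mutual-information term), and the deterministic conditional covariances must be justified also when $\bg_t$ is degenerate --- all true, but worth a sentence each.
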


Proof of Lemma \ref{lemstep2} is in Appendix \ref{secprooflemstep2}. Combining the above two lemmas, we obtain the following consequence, which is the main proposition in this section.
It guarantees that we can alternatively solve (P-LGS) in order to solve (P-SRD).
\begin{proposition}
\label{propequivalence}
Suppose $f_{\text{SRD}}^*<+\infty$. Then there exists an optimal solution $\gamma_{\text{LGS}}\in\Gamma_{\text{LGS}} (\subset \Gamma)$ to (P-LGS).
Moreover, an optimal solution to (P-LGS) is also an optimal solution to (P-SRD), and $f_{\text{SRD}}^*=f_{\text{LGS}}^*$.
\end{proposition}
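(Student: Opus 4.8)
The plan is to derive Proposition~\ref{propequivalence} purely as a bookkeeping consequence of Lemma~\ref{lemstep1} and Lemma~\ref{lemstep2}, without re-examining the underlying stochastic kernels. First I would record the chain of inclusions $\Gamma_{\text{LGS}}\subset\Gamma_1\subset\Gamma$, which immediately gives the trivial direction of the inequalities between the three optimal values: since minimizing over a smaller set can only increase the optimum, we have $f_{\text{SRD}}^*\le f_1^*\le f_{\text{LGS}}^*$. The work is therefore to establish the reverse inequalities and to track where the optimizer actually lives.

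Next I would combine the two lemmas to squeeze all three optimal values together. Starting from the hypothesis $f_{\text{SRD}}^*<+\infty$, I would invoke Lemma~\ref{lemstep1}(i) to produce a $\gamma_1\in\Gamma_1$ with objective value $f_1\le f_{\text{SRD}}^*$, hence $f_1^*\le f_{\text{SRD}}^*$; then Lemma~\ref{lemstep2}(i) applied to this $\gamma_1$ yields a $\gamma_{\text{LGS}}\in\Gamma_{\text{LGS}}$ with $f_{\text{LGS}}\le f_1\le f_{\text{SRD}}^*<+\infty$, so $f_{\text{LGS}}^*\le f_{\text{SRD}}^*$. Together with the trivial direction from the inclusions, this forces $f_{\text{SRD}}^*=f_1^*=f_{\text{LGS}}^*$. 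A subtle point I would be careful about is that $f_{\text{LGS}}^*$ is an infimum a priori, so I must argue it is attained: the candidate $\gamma_{\text{LGS}}$ just constructed achieves the value $f_{\text{SRD}}^*=f_{\text{LGS}}^*$, which certifies that the infimum defining (P-LGS) is a minimum and that this $\gamma_{\text{LGS}}$ is an optimal solution. This is exactly the existence claim in the proposition.

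It then remains to show that an optimal $\gamma_{\text{LGS}}$ for (P-LGS) is also optimal for (P-SRD). Let $\gamma_{\text{LGS}}\in\Gamma_{\text{LGS}}$ attain $f_{\text{LGS}}^*$. Because $\gamma_{\text{LGS}}\in\Gamma_{\text{LGS}}\subset\Gamma_1$, Lemma~\ref{lemstep2}(ii) guarantees it attains a value $f_1\le f_{\text{LGS}}^*$ in (P-1); and because $\gamma_{\text{LGS}}\in\Gamma_1\subset\Gamma$, Lemma~\ref{lemstep1}(ii) then guarantees it attains $f_{\text{SRD}}=f_1$ in (P-SRD). Chaining these with the equality of optimal values already proved, the value of $\gamma_{\text{LGS}}$ in (P-SRD) equals $f_{\text{SRD}}^*$, so $\gamma_{\text{LGS}}$ is optimal for (P-SRD) as well. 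Here I would note that Lemma~\ref{lemstep1}(ii) asserts $f_{\text{SRD}}=f_1$ (the two objectives, directed information and the per-step mutual-information sum, agree on $\Gamma_1$), which is the precise statement needed to transfer optimality back to the original directed-information cost.

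I do not anticipate a genuine obstacle here, since the proposition is essentially a corollary that threads the four parts of the two lemmas in the right order; the entire difficulty has already been absorbed into Lemma~\ref{lemstep1} and Lemma~\ref{lemstep2}. The one place demanding care is the logical hygiene: I must keep clear the distinction between the value attained by a \emph{particular} policy and the \emph{optimal} value, ensure the finiteness hypothesis is propagated so each lemma is applicable, and explicitly convert the infimum $f_{\text{LGS}}^*$ into an attained minimum rather than silently assuming attainment. Getting that attainment argument right — exhibiting a concrete optimizer rather than merely equating infima — is the only subtle step in an otherwise routine synthesis.
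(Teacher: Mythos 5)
Your overall architecture---threading parts (i) and (ii) of Lemmas~\ref{lemstep1} and \ref{lemstep2}---is the same as the paper's, and your third paragraph (transferring optimality of a given P-LGS optimizer back to P-SRD via Lemma~\ref{lemstep2}(ii) followed by Lemma~\ref{lemstep1}(ii)) is sound. However, your first step already fails as stated: the inequalities $f_{\text{SRD}}^*\le f_1^*\le f_{\text{LGS}}^*$ do \emph{not} follow from the inclusions $\Gamma_{\text{LGS}}\subset\Gamma_1\subset\Gamma$ alone, because the three problems minimize \emph{different} objective functions ($\sum_t I(\bx^t;\bz_t|\bz^{t-1})$, $\sum_t I(\bx_t;\bz_t|\bz^{t-1})$, and $\sum_t I(\bx_t;\by_t|\by^{t-1})$, respectively). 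The principle ``minimizing over a smaller set can only increase the optimum'' applies only when the objective is common to all problems. The inequalities are true, but they are precisely the content of parts (ii) of the two lemmas (e.g.\ Lemma~\ref{lemstep1}(ii) is exactly the statement that the P-SRD and P-1 objectives agree on $\Gamma_1$); they are not free.

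The more serious gap is in your attainment argument, and it is exactly the infimum-versus-minimum conflation you warn against. Lemma~\ref{lemstep1}(i) takes as input a policy $\gamma\in\Gamma$ \emph{attaining} a finite value $f_{\text{SRD}}$; the hypothesis $f_{\text{SRD}}^*<+\infty$ says only that the infimum is finite, not that any policy attains it. So you may feed only near-optimal policies into the lemma chain: for every $\epsilon>0$ you obtain some $\gamma_{\text{LGS}}^{\epsilon}$ with P-LGS value at most $f_{\text{SRD}}^*+\epsilon$, which yields $f_{\text{LGS}}^*\le f_{\text{SRD}}^*$ and hence equality of the optimal values, but it produces no single policy achieving $f_{\text{LGS}}^*$. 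Your sentence ``the candidate $\gamma_{\text{LGS}}$ just constructed achieves the value $f_{\text{SRD}}^*=f_{\text{LGS}}^*$'' silently presumes a P-SRD optimizer exists, which is not given. The existence claim in the proposition genuinely cannot be extracted from the two lemmas by bookkeeping; it requires the finite-dimensional structure of $\Gamma_{\text{LGS}}$. The paper supplies this in Section~\ref{secsdp}: (P-LGS) is rewritten as the max-det problem (\ref{optprob3hard}), whose feasible set can be made compact (replacing $\Pi_t\succ 0$ by $\Pi_t\succeq\epsilon I$ without altering the result), so the Weierstrass theorem yields an optimizer, which is then mapped back to a policy in $\Gamma_{\text{LGS}}$. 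Without that compactness (or an equivalent lower-semicontinuity) argument, the first sentence of Proposition~\ref{propequivalence} remains unproved in your write-up.
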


\section{SDP-based synthesis}
\label{secsdp}

{\upd In this section, we develop an efficient numerical algorithm to solve (P-LGS). Due to the preceding discussion, this is equivalent to developing an algorithm to solve (P-SRD).
Let (\ref{gmprocess}) be given. Assume temporarily that (\ref{srdchannel}) is also fixed. The Kalman filtering formula for  computing $\bz_t=\mathbb{E}(\bx_t|\by^t)$ is}
\begin{align*}
&\bz_t= \bz_{t|t-1}\!+\!P_{t|t-1}C_t^\top (C_tP_{t|t-1}C_t^\top \!+\!V_t)^{-1}(\by_t\!-\!C_t\bz_{t|t-1}) \\
&\bz_{t|t-1}=A_{t-1}\bz_{t-1}
\end{align*}
where $P_{t|t-1}$ is the covariance matrix of $\bx_t-\mathbb{E}(\bx_t|\by^{t-1})$, which can be recursively computed as
\begin{subequations}
\label{defmatp}
\begin{align}
&P_{t|t-1}=A_{t-1}P_{t-1|t-1}A_{t-1}^\top+W_{t-1} \\
&P_{t|t}=(P_{t|t-1}^{-1}+\mathsf{SNR}_t)^{-1}
\end{align}
\end{subequations}
for $t=1,\cdots, T$ with $P_{0|0}=P_0$. The variable $\mathsf{SNR}_t$ is defined by (\ref{matrixsnr}).
Using these quantities, mutual information terms in (\ref{plgs1}) can be explicitly written as
\ifdefined\FIGDOUBLECOL 
\begin{align*}
&I(\bx_t;\by_t|\by^{t-1})  \\
&=h(\bx_t|\by^{t-1})-h(\bx_t|\by^t)  \\
&= \frac{1}{2}\!\log\det (A_{t-1}P_{t-1|t-1}A_{t-1}^\top \!+\! W_{t-1})\!-\!\frac{1}{2}\! \log\det P_{t|t}.
\end{align*}
\fi
\ifdefined\FIGSINGLECOL  
\begin{align*}
I(\bx_t;\by_t|\by^{t-1})  &=h(\bx_t|\by^{t-1})-h(\bx_t|\by^t)  \\
&= \frac{1}{2}\!\log\det (A_{t\!-\!1}P_{t\!-\!1|t\!-\!1}A_{t\!-\!1}^\top \!+\! W_{t\!-\!1})\!-\!\frac{1}{2}\! \log\det P_{t|t}.
\end{align*}
\fi
Note that $W_t\succ 0$ and $V_t\succ 0$ guarantee that both differential entropy terms are finite.
Hence, (P-LGS) is equivalent to the following optimization problem in terms of the variables $\{\textsf{SNR}_t, P_{t|t} \}_{t=1}^T$:
\begin{subequations}
\label{optprob1}
\begin{align}
\min  &\; \sum_{t=1}^T \frac{1}{2}\log\det (A_{t-1}P_{t-1|t-1}A_{t-1}^\top +W_{t-1})\! \nonumber \\[-2ex]
&\hspace{26ex} -\frac{1}{2} \log\det P_{t|t} \\ \vspace{-3ex}
\text{s.t.} & \;\; P_{t|t}^{-1}\!=\!(A_{t-1}P_{t-1|t-1}A_{t-1}^\top \!+\! W_{t-1})^{-1}\!+\!\mathsf{SNR}_t \label{optprob1cons} \\
& \;\;  \mathsf{SNR}_t \succeq 0, \text{Tr}(\Theta_tP_{t|t})\leq D_t.
\end{align}
\end{subequations}
Equality (\ref{optprob1cons}) is obtained by eliminating $P_{t|t-1}$ from (\ref{defmatp}).
At this point, one may note that (\ref{optprob1}) can be viewed as an optimal control problem with state $P_{t|t}$ and control input $\mathsf{SNR}_t$. Naturally, dynamic programming approach has been proposed in the literature in similar contexts \cite{bucy1980distortion,TatikondaThesis,mahajan2009optimal,Gorantla2011information}. 
Alternatively, we next propose a method to transform (\ref{optprob1}) into an SDP problem.
This allows us to solve (P-SRD) using standard SDP solvers, which is now a mature technology.

\subsection{SRD optimization as max-det problem}
Now we show that (\ref{optprob1}) can be converted to a determinant maximization problem \cite{vandenberghe1998} subject to linear matrix inequality constraints. 
The first step is to transform (\ref{optprob1}) into an optimization problem in terms of $\{P_{t|t}\}_{t=1}^T$ only. This is possible by simply replacing the nonlinear equality constraint (\ref{optprob1cons}) 
with a linear inequality constraint
\[ 0 \prec P_{t|t} \preceq  A_{t-1}P_{t-1|t-1}A_{t-1}^\top + W_{t-1}. \]
This replacement eliminates $\mathsf{SNR}_t$ from (\ref{optprob1}) giving us:
\begin{subequations}
\label{optprob2}
\begin{align}
\min  &\; \sum_{t=1}^T \frac{1}{2}\log\det (A_{t-1}P_{t-1|t-1}A_{t-1}^\top +W_{t-1})\! \nonumber \\[-2ex]
&\hspace{26ex} -\frac{1}{2} \log\det P_{t|t}  \label{optprob2obj} \\
 \text{s.t.} & \;\; 0 \prec P_{t|t} \preceq  A_{t-1}P_{t-1|t-1}A_{t-1}^\top + W_{t-1} \label{optprob2cons}  \\
& \;\; \text{Tr}(\Theta_tP_{t|t})\leq D_t.
\end{align}
\end{subequations}
Note that (\ref{optprob1}) and (\ref{optprob2}) are mathematically equivalent, since eliminated SNR variables can be easily constructed from $\{P_{t|t}\}_{t=1}^T$ through
\begin{equation}
\label{cvconst}
 \mathsf{SNR}_t= P_{t|t}^{-1}-(A_{t-1}P_{t-1|t-1}A_{t-1}^\top + W_{t-1})^{-1}.
\end{equation}
The second step is to rewrite the objective function  (\ref{optprob2obj}).
Regrouping terms,  (\ref{optprob2obj}) can be written as a summation of the initial cost $\frac{1}{2}\log \det (A_0 P_{0|0}A_0^\top + W_0)$, the final cost $-\frac{1}{2}\log\det P_{T|T}$, and stage-wise costs
\begin{equation}
\label{beforedetlemma}
\frac{1}{2}\log\det (A_t P_{t|t} A_t^\top + W_t)-\frac{1}{2}\log\det P_{t|t}
\end{equation}
for $t=1,\cdots, T-1$.
Applying the matrix determinant lemma (e.g., Theorem 18.1.1 in \cite{harville1997matrix}), (\ref{beforedetlemma}) can be rewritten as
\begin{equation}
\label{afterdetlemma}
\frac{1}{2}\log\det W_t - \frac{1}{2}\log\det (P_{t|t}^{-1}+A_t^\top W_t^{-1}A_t)^{-1}. 
\end{equation}
Due to the monotonicity of the determinant function,  (\ref{afterdetlemma}) is equal to the optimal value of
\begin{subequations}
\begin{align}
\min & \;\; \frac{1}{2}\log\det W_t-\frac{1}{2}\log\det \Pi_t \\
\text{s.t. } & \;\; 0 \prec \Pi_t \preceq (P_{t|t}^{-1}+A_t^\top W_t^{-1}A_t)^{-1}. \label{piconstinv}
\end{align}
\end{subequations}
Applying the matrix inversion lemma, (\ref{piconstinv}) is equivalent to
$0 \prec \Pi_t \preceq   P_{t|t}-P_{t|t}A_t^\top (W_t+A_tP_{t|t}A_t^\top )^{-1}A_tP_{t|t}$, 
which is further equivalent to 
\begin{equation}
\label{lmi1}
\left[ \begin{array}{cc}P_{t|t}-\Pi_t & P_{t|t}A_t^\top \\
A_tP_{t|t} & W_t+A_t P_{t|t}A_t^\top  \end{array}\right] \succeq 0, \;\;\; \Pi_t \succ 0.
\end{equation}
Note that (\ref{lmi1}) is a linear matrix inequality (LMI) condition.
The above discussion leads to the following conclusion.

\begin{theorem}
\label{propsdp}
An optimal solution to (P-LGS) can be constructed by solving the following determinant maximization problem with decision variables $\{P_{t|t}, \Pi_t\}_{t=1}^T$:
\begin{subequations}
\label{optprob3hard}
\begin{align}
\min & \;\; -\sum_{t=1}^T \frac{1}{2}\log\det \Pi_t +c \\
\text{s.t.} & \;\; \Pi_t \succ  0, \;\; t=1, ... , T \label{optprob3hard2} \\
& \;\; P_{t+1|t+1}\preceq A_t P_{t|t}A_t^\top \!+\!W_t, \;\; t=0, ... , T-1 \\
& \; \left[\! \!\!\begin{array}{cc}P_{t|t}\!-\!\Pi_t \!\!\!&\!\!\! P_{t|t}A_t^\top \\
A_tP_{t|t} \!\!\!&\!\!\! W_t\!+\!A_t P_{t|t}A_t^\top  \end{array}\!\!\!\right] \! \succeq\! 0, \;\; t\!=\!1, ... , T\!-\!1 \label{averageconst}\\
& \;\;  \text{Tr}(\Theta_t P_{t|t}) \leq D_t, \;\; t\!=\!1, ... , T \\
& \;\; P_{T|T}=\Pi_T, \label{optprob3hard3}
\end{align}
\end{subequations} 
where 
$ c=\frac{1}{2}\log\det (A_0P_{0|0}A_0^\top +W_0) + \sum_{t=1}^{T-1} \frac{1}{2}\log\det W_t$
is a constant. 
The optimal sequence $\{\mathsf{SNR}_t\}_{t=1}^T$ can be reconstructed from (\ref{cvconst}), from which $\{C_t, V_t\}_{t=1}^T$ satisfying (\ref{matrixsnr}) can be reconstructed via the singular value decomposition.
An optimal solution to (P-LGS) is obtained as a composition of (\ref{srdchannel}) and (\ref{srdkf}).
\end{theorem}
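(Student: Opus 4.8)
The plan is to prove the theorem by assembling the chain of reductions already laid out above into a single exact equivalence, (P-LGS) $\equiv$ (\ref{optprob1}) $\equiv$ (\ref{optprob2}) $\equiv$ (\ref{optprob3hard}), and to verify at each link that the transformation preserves both the feasible set (in the surviving variables) and the objective value up to the additive constant $c$. Since the stage-wise manipulations were carried out before the statement, the work of the proof is to confirm that none of them introduces a gap, and then to record the final variable reconstruction. By Proposition~\ref{propequivalence} an optimal $\gamma_{\text{LGS}}\in\Gamma_{\text{LGS}}$ exists whenever $f^*_{\text{SRD}}<+\infty$, so it suffices to exhibit it as the composition (\ref{srdchannel})--(\ref{srdkf}) built from a solution of (\ref{optprob3hard}).

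First I would make the reduction (P-LGS) $\equiv$ (\ref{optprob1}) precise: for a fixed linear-Gaussian sensor (\ref{srdchannel}), the Kalman recursions (\ref{defmatp}) determine $\{P_{t|t}\}$ from $\{\mathsf{SNR}_t\}$, and each conditional mutual information in (\ref{plgs1}) evaluates to the displayed difference of log-determinants; because $W_t\succ 0$ and $V_t\succ 0$ keep $P_{t|t-1}$ and $P_{t|t}$ positive definite, every differential entropy is finite and the objective is well defined. Next I would argue that eliminating $\mathsf{SNR}_t$ to pass from (\ref{optprob1}) to (\ref{optprob2}) is \emph{exact} rather than a relaxation: the equality (\ref{optprob1cons}) together with $\mathsf{SNR}_t\succeq 0$ is equivalent, by operator monotonicity of matrix inversion on $\mathbb{S}_{++}$, to the single inequality $0\prec P_{t|t}\preceq A_{t-1}P_{t-1|t-1}A_{t-1}^\top+W_{t-1}$, and given any feasible $\{P_{t|t}\}$ the matrix $\mathsf{SNR}_t$ is uniquely recovered through (\ref{cvconst}) with $\mathsf{SNR}_t\succeq 0$ automatic. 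Since the objective of (\ref{optprob1}) depends on the decision variables only through $\{P_{t|t}\}$, this is a value-preserving bijection.

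Then I would regroup the objective (\ref{optprob2obj}) into the initial cost $\tfrac12\log\det(A_0P_{0|0}A_0^\top+W_0)$, the final cost $-\tfrac12\log\det P_{T|T}$, and the stage costs (\ref{beforedetlemma}) for $t=1,\dots,T-1$, apply the matrix determinant lemma to reach (\ref{afterdetlemma}), and introduce the auxiliary variable $\Pi_t$. The crucial observation is that for each fixed $P_{t|t}$ the inner minimization $\min\{-\tfrac12\log\det\Pi_t : 0\prec\Pi_t\preceq(P_{t|t}^{-1}+A_t^\top W_t^{-1}A_t)^{-1}\}$ is attained at the upper bound, because $-\log\det$ is strictly decreasing in the positive-definite order; hence replacing (\ref{afterdetlemma}) by a free $\Pi_t$ under the constraint (\ref{piconstinv}) and \emph{minimizing} leaves the optimal value unchanged. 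As the $\Pi_t$ for distinct $t$ decouple and each appears only in its own summand and its own constraint, the joint minimization over $\{P_{t|t},\Pi_t\}$ equals the minimization over $\{P_{t|t}\}$ alone. The final index $t=T$ carries no determinant-lemma term, so I would simply set $\Pi_T=P_{T|T}$ via (\ref{optprob3hard3}) to absorb $-\tfrac12\log\det P_{T|T}$ into $-\tfrac12\log\det\Pi_T$. Converting (\ref{piconstinv}) to the LMI (\ref{averageconst}) uses the matrix inversion lemma followed by a Schur-complement step, valid because $W_t+A_tP_{t|t}A_t^\top\succ 0$; collecting the terms independent of the decision variables yields exactly $c$, and the variable part reduces to $-\sum_{t=1}^T\tfrac12\log\det\Pi_t$, matching (\ref{optprob3hard}).

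Finally, having solved (\ref{optprob3hard}) for $\{P_{t|t},\Pi_t\}$, I would reconstruct $\{\mathsf{SNR}_t\}$ from (\ref{cvconst}) and factor each $\mathsf{SNR}_t\succeq 0$ by its eigen/singular value decomposition $\mathsf{SNR}_t=U_t\Lambda_t U_t^\top$: keeping the nonzero part gives $C_t=\Lambda_{t,+}^{1/2}U_{t,+}^\top\in\mathbb{R}^{r_t\times n}$ and $V_t=I_{r_t}$ with $r_t=\rank\mathsf{SNR}_t$, which satisfies (\ref{matrixsnr}) and automatically handles Remark~\ref{remark1} (a null $\mathsf{SNR}_t$ gives $r_t=0$, i.e., no measurement). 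Composing this sensor with the Kalman filter (\ref{srdkf}) returns an element of $\Gamma_{\text{LGS}}$ attaining $f^*_{\text{LGS}}$, completing the construction. I expect the main obstacle to be the exactness claims rather than the algebra: one must verify that the inequality in (\ref{optprob2cons}) and the bound (\ref{piconstinv}) are both \emph{tight at optimality} so the convex reformulations incur no loss, and that the strict positive-definiteness hypotheses ($P_0\succ0$, $W_t\succ0$) propagate through the recursion so every inverse and Schur complement remains well defined.
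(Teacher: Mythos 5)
Your proposal is correct and follows essentially the same route as the paper: the chain (P-LGS) $\to$ (\ref{optprob1}) $\to$ (\ref{optprob2}) $\to$ (\ref{optprob3hard}) via the Kalman-filter expression of the conditional mutual information, exact elimination of $\mathsf{SNR}_t$, the matrix determinant lemma, the monotone introduction of $\Pi_t$, and the matrix-inversion-lemma/Schur-complement conversion to the LMI (\ref{averageconst}), followed by reconstruction of $\{C_t,V_t\}$ from (\ref{cvconst}). The only quibble is terminological: the elimination step rests on the \emph{order-reversing} (anti-monotone) property of matrix inversion on $\mathbb{S}_{++}^n$ rather than ``operator monotonicity,'' and tightness of (\ref{optprob2cons}) at optimality is not actually needed since the inequality exactly characterizes the existence of some $\mathsf{SNR}_t\succeq 0$, as your own bijection argument already shows.
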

\begin{remark}
Under the assumption that $W_t\succ 0, D_t>0$ for every $t=1,\cdots,T$, the max-det problem (\ref{optprob3hard}) is always strictly feasible and there exists an optimal solution.\footnote{To see the strict feasibility, consider $P_{t|t}=\delta I$ for $t=1,\cdots,T-1$ and $\Pi_t=\delta^2 I$ for $t=1,\cdots,T$ with sufficiently small $\delta>0$. The constraint set defined by (\ref{optprob3hard2})-(\ref{optprob3hard3}) can be made compact by replacing (\ref{optprob3hard2}) with $\Pi_t\succeq \epsilon I$ without altering the result. Thus the existence of an optimal solution is guaranteed by the Weierstrass theorem.}
 Invoking Proposition~\ref{propequivalence}, we have thus shown by construction that there always exists an optimal solution to (P-SRD) under this assumption.
\end{remark}
\begin{remark}
As we mentioned in Remark \ref{remark1}, choosing an appropriate  dimension $r_t$ of the sensor output (\ref{srdchannel}) is part of (P-LGS). It can be easily seen from Theorem \ref{propsdp} that the minimum sensor dimension to achieve the optimality in (P-LGS) is given by $r_t=\text{rank}(\textsf{SNR}_t)$.
\end{remark}

Using the same technique,  the soft-constrained version of the problem (\ref{srdsoft}) can be formulated as:
\begin{subequations}
\label{optprob3}
\begin{align}
\min & \;\; \sum_{t=1}^T \left(\frac{\alpha_t}{2}\text{Tr}(\Theta_t P_{t|t})-\frac{1}{2}\log\det \Pi_t \right)+c \\
\text{s.t.} & \;\; \Pi_t \succ  0, \;\; t=1, ... , T \label{optprob32} \\
& \;\; P_{t+1|t+1}\preceq A_t P_{t|t}A_t^\top +W_t, \;\; t=0, ...  , T-1 \\
& \; \left[\! \!\!\begin{array}{cc}P_{t|t}\!-\!\Pi_t \!\!\!&\!\!\! P_{t|t}A_t^\top \\
A_tP_{t|t} \!\!\!&\!\!\! W_t\!+\!A_t P_{t|t}A_t^\top  \end{array}\!\!\!\right] \! \succeq\! 0, \;\; t\!=\!1, ... , T\!-\!1  \\
& \;\; P_{T|T}=\Pi_T
\end{align}
\end{subequations} 

The next proposition claims that (\ref{optprob3hard})  and  (\ref{optprob3})  admit the same optimal solution provided Lagrange multipliers $\alpha_t, t=1,\cdots, T$,  are chosen correctly. This further implies that, with the same choice of $\alpha_t$, two versions of the Gaussian SRD problems  (P-SRD) and (\ref{srdsoft}) are equivalent.
\begin{proposition}
Suppose $W_t\succ 0, D_t > 0$ for $t=1,\cdots, T$. Then, there exist $\alpha_t, t=1,\cdots, T$ such that an optimal solution to (\ref{optprob3hard}) is also an optimal solution to (\ref{optprob3}). 
\end{proposition}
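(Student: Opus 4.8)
The plan is to exploit the convexity of both problems together with strong Lagrangian duality. First I would record that (\ref{optprob3hard}) is a convex program: the objective $-\sum_{t=1}^T \frac{1}{2}\log\det\Pi_t + c$ is convex in the decision variables because $-\log\det$ is convex on the positive definite cone, while every constraint --- the positivity $\Pi_t \succ 0$, the covariance-recursion inequalities, the LMIs (\ref{averageconst}), the terminal equality $P_{T|T}=\Pi_T$, and the distortion inequalities $\text{Tr}(\Theta_t P_{t|t})\le D_t$ --- is convex (indeed linear or an LMI). Under the standing assumption $W_t\succ 0$, $D_t>0$, the strict-feasibility argument already given in the Remark following Theorem~\ref{propsdp} shows that Slater's condition holds and that an optimal solution exists; fix one and denote it $(P^*_{t|t},\Pi^*_t)_{t=1}^T$.

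Next I would split the constraints into a \emph{structural} group, namely $\Pi_t\succ 0$, the covariance inequalities, the LMIs, and $P_{T|T}=\Pi_T$, which jointly define a convex set $\mathcal{S}$, and the \emph{distortion} group $\text{Tr}(\Theta_t P_{t|t})\le D_t$. Dualizing only the distortion constraints, I form the partial Lagrangian
\begin{equation*}
L(P,\Pi,\alpha)= -\sum_{t=1}^T \tfrac{1}{2}\log\det\Pi_t + c + \sum_{t=1}^T \tfrac{\alpha_t}{2}\bigl(\text{Tr}(\Theta_t P_{t|t})-D_t\bigr).
\end{equation*}
Because the program is convex and Slater's condition holds, strong duality applies and the KKT conditions are satisfied at $(P^*,\Pi^*)$: there exist multipliers $\alpha_t^*\ge 0$ such that complementary slackness $\alpha_t^*(\text{Tr}(\Theta_t P^*_{t|t})-D_t)=0$ holds and, crucially, the saddle-point condition is met, i.e.\ $(P^*,\Pi^*)$ minimizes $L(\cdot,\cdot,\alpha^*)$ over all $(P,\Pi)\in\mathcal{S}$. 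This is the content of the Lagrange multiplier theorem cited via Proposition 3.1.1 of \cite{bertsekas1995}, specialized to a convex problem possessing a Slater point.

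Finally I would observe that, for the fixed multipliers $\alpha^*$, the map $(P,\Pi)\mapsto L(P,\Pi,\alpha^*)$ differs from the objective of (\ref{optprob3}) only by the additive constant $-\sum_{t=1}^T \tfrac{\alpha_t^*}{2}D_t$, which does not affect the minimizer, and that $\mathcal{S}$ is exactly the feasible set of (\ref{optprob3}). Hence minimizing $L(\cdot,\cdot,\alpha^*)$ over $\mathcal{S}$ is the same optimization as (\ref{optprob3}) with $\alpha_t=\alpha_t^*$, so $(P^*,\Pi^*)$ is an optimal solution of (\ref{optprob3}), which is the claim. I expect the main obstacle to be the rigorous justification of strong duality and the saddle-point property rather than any algebra: one must confirm that Slater's condition survives the partial dualization and that the dual optimum is attained with finite $\alpha_t^*$. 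This is precisely where the hypotheses $W_t\succ 0$ and $D_t>0$ are indispensable, since they guarantee both a strictly feasible point and the boundedness needed for attainment; once Slater is in force, the remaining steps are standard convex-duality arguments.
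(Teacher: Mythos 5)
Your proof is correct and follows essentially the same route as the paper's: strict feasibility gives Slater's constraint qualification, hence strong duality and attainment of the dual optimum, and the dual-optimal multipliers (scaled by two) serve as the required $\alpha_t$. The only cosmetic difference is that you dualize just the distortion constraints while keeping the structural LMIs as an abstract convex set $\mathcal{S}$, whereas the paper's detailed argument dualizes all constraints (with an extra multiplier for the LMI block) and then fixes the distortion multipliers at their optimal values---both reduce to the same saddle-point conclusion.
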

\ifdefined\SHORTVERSION
\begin{proof}
Both (\ref{optprob3}) and (\ref{optprob3hard}) are strictly feasible. The result follows using the fact that the Slater's constraint qualification is satisfied for this problem, which guarantees that strong duality holds and the dual optimum is attained \cite{boyd2009}. 
\end{proof}
\fi
\ifdefined\LONGVERSION
\begin{proof}
First, notice that both (\ref{optprob3hard}) and (\ref{optprob3}) are strictly feasible with finite optimal values. Also, conditions (\ref{optprob32}) and  (\ref{optprob3hard2}) can be replaced with $\Pi_t \succeq \epsilon I$ with sufficiently small $\epsilon>0$ without altering problems. After this replacement, the feasible domains of  (\ref{optprob3}) and (\ref{optprob3hard}) become compact, since they are clearly bounded. Hence both  (\ref{optprob3}) and (\ref{optprob3hard}) have optimal solutions.
After (\ref{optprob3hard2}) is replaced with $\Pi_t \succeq \epsilon I$, and letting $n$ be the number of real variables in problem (\ref{optprob3hard}),  (\ref{optprob3hard}) can be written as an optimization problem in terms of a vector $x\in \mathbb{R}^n$ as
\begin{subequations}
\label{probfghard}
\begin{align}
\min_{x\in \mathbb{R}^n} & \;\; f(x) \\
\text{ s.t. } & \;\; g(x) \preceq 0, \;\; h_t(x) \leq D_t \;\; t=1, \cdots, T
\end{align}
\end{subequations}
where $f(\cdot)$ is convex, $g(\cdot)$ is affine, and $h_t(\cdot), t=1,\cdots, T$ are linear in $x$.
Denote by $x^*$ an optimal solution to  (\ref{probfghard}). Let $Y \succeq 0, z_t\geq 0, t=1,\cdots, T$ be Lagrangian multipliers and
\[L(x; Y, z)=f(x)+\left< Y, g(x)\right> + \sum_{t=1}^T z_t(h_t(x)-D_t)\]
be the Lagrangian corresponding to the problem (\ref{probfghard}).
Due to the strict feasibility of  (\ref{optprob3hard}),  problem (\ref{probfghard}) satisfies Slater's constraint qualification. This implies that strong duality holds and that dual optimum is attained \cite{boyd2009}. Denoting by $(Y^*, z^*)$ a dual optimal solution, this implies 
\begin{equation}
\label{eqsaddle}
L(x^* ; Y, z) \leq L(x^* ; Y^*, z^*)\leq L(x; Y^*, z^*)
\end{equation}
for all $x\in \mathbb{R}^n$, $Y \succeq 0$, and $z_t \geq 0, t=1, \cdots, T$.
We need to prove that there exist $\alpha_t, t=1,\cdots, T$ such that $x^*$ is also an optimal solution to 
\begin{subequations}
\label{probfg}
\begin{align}
\min & \;\; f(x) + \sum_{t=1}^T \frac{\alpha_t}{2} (h_t(x) - D_t)\\
\text{ s.t. } & \;\; g(x) \preceq 0.
\end{align}
\end{subequations}
If we write $L(x;Y,z)=:L_z(x;Y)$, the corresponding Lagrangian to (\ref{probfg}) is $L_{\alpha/2}(x;Y)$. By choosing $\alpha_t$ as $\alpha_t/2=z_t^*, t=1, \cdots, T$, it follows from (\ref{eqsaddle}) that 
\[
L_{z^*}(x^* ; Y,) \leq L_{z^*}(x^* ; Y^*)\leq L_{z^*}(x; Y^*).
\]
Thus, $x^*$ is an optimal solution to (\ref{probfg}) when $\alpha_t/2=z_t^*, t=1, \cdots, T$. 
\end{proof}
\fi

\subsection{Max-det problem as SDP}
Strictly speaking, optimization problems (\ref{optprob3hard}) and (\ref{optprob3}) are in the class of determinant maximization problems \cite{vandenberghe1998}, but not in the standard form of the SDP.\footnote{In the standard form, SDP is an optimization problem of the form $\min \; \left<C, X\right>  \text{ s.t. } \mathcal{A}(X)=B, X \succeq 0$.}  However, they can be considered as SDPs in a broader sense for the following reasons. First, the hard constrained version (\ref{optprob3hard}) can be indeed transformed into a standard SDP problem.
This conversion is possible by following the discussion in Chapter 4 of \cite{ben2001}.
Second, sophisticated and efficient algorithms based on the interior-point method for SDP can almost directly  be applied to max-det problems as well. 
In fact, off-the-shelf SDP solvers such as SDPT3 \cite{tutuncu2003solving} have built-in functions to handle log-determinant terms directly.

Recall that (P-LGS) and (P-SRD) have a common optimal solution.
Hence, Proposition \ref{propsdp} shows that both (P-LGS) and (P-SRD) are essentially solvable via SDP, which is much stronger than merely saying that they are convex problems. Note that convexity alone does not guarantee the existence of an efficient optimization algorithm.
\begin{figure}[t]
    \centering
    \ifdefined\FIGDOUBLECOL \includegraphics[width=.8\columnwidth]{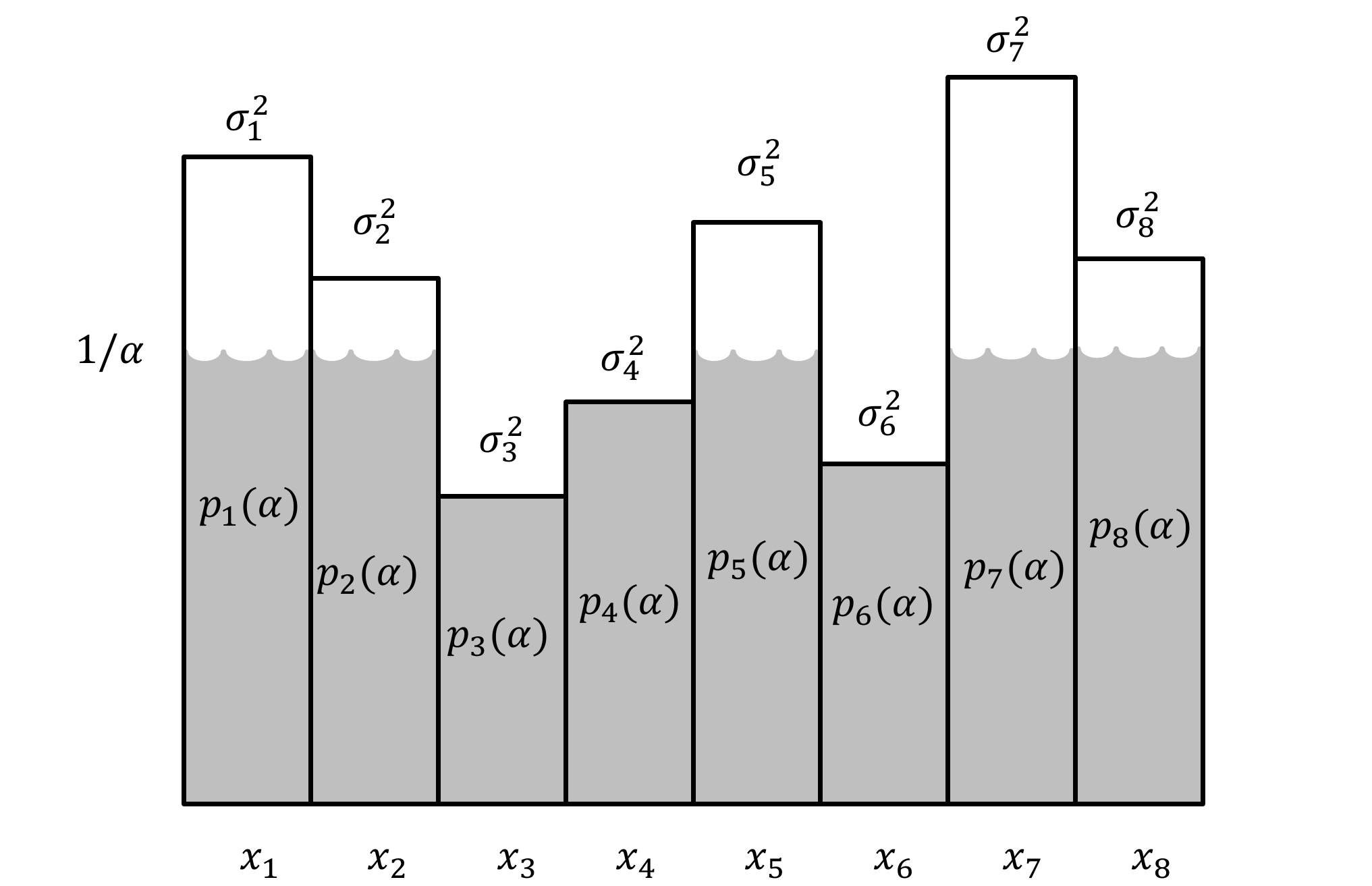} \fi
    \ifdefined\FIGSINGLECOL \includegraphics[width=.5\columnwidth]{waterfilling.pdf} \fi
    \caption{Reverse water-filling solution to the Gaussian rate-distortion problem.}
    \label{fig:waterfilling}
\end{figure}

\subsection{Complexity analysis}

In this section, we briefly consider the arithmetic complexity  (i.e., the worst case number of arithmetic operations needed to obtain an $\epsilon$-optimal solution) of problem (\ref{optprob3hard}), and how it grows as the horizon length $T$ grows when the dimensions of the Gauss-Markov process (\ref{gmprocess}) are fixed to $n_t=n \; \forall t=1,\cdots,T$.
For a preliminary analysis, it would be natural for us to resort to the existing interior-point method literature (e.g., \cite{ben2001,renegar2001}). Interior-point methods for the determinant maximization problem are already considered in \cite{vandenberghe1998,toh1999,tsuchiya2007}. 
The most computationally expensive step in the interior-point method is the Cholesky factorization involved in the Newton steps, which requires $\mathcal{O}(T^3)$ operations in general.
However, it is possible to exploit the sparsity of coefficient matrices in the SDPs to reduce operation counts \cite{fukuda2001exploiting, nakata2003exploiting,vandenberghe2015chordal}.
By exploiting the structure of our SDP formulation (\ref{optprob3hard}), it is theoretically expected that there exists a specialized interior-point method algorithm for (\ref{optprob3hard}) whose arithmetic complexity is $\mathcal{O}(T \log(1/\epsilon))$. However, more careful study and computational experiments are needed to verify this conjecture.

\subsection{Single stage problem}
When $T=1$, the result of Proposition \ref{propsdp} recovers the well-known ``reverse water-filling'' solution for the standard Gaussian rate-distortion problem \cite{CoverThomas}. 
To see this, notice that $T=1$ reduces problem (\ref{optprob3}) to
\begin{align*}
\min & \;\; \text{Tr} P - \frac{1}{\alpha} \log \det P \\
\text{s.t.} & \;\; 0 \preceq P \preceq \diag (\sigma_1^2, \cdots, \sigma_n^2).
\end{align*}
Here, we have already assumed $\Theta=I$ and $AP_0A^\top+W=\diag (\sigma_1^2, \cdots, \sigma_n^2) \succeq 0$. This does not result in loss of generality, since otherwise a change of variables $P\leftarrow U\Theta^{\frac{1}{2}}P\Theta^{\frac{1}{2}}U^\top$, where $U$ is an orthonormal matrix that makes $U\Theta^{\frac{1}{2}}(AP_0A^\top+W)\Theta^{\frac{1}{2}}U^\top$ diagonal, converts the problem into the above form. 
For any positive definite matrix $P$, Hadamard's inequality (e.g., \cite{CoverThomas}) states that 
$
\det P \leq \prod_i P_{ii}
$
and the equality holds if and only if the matrix is diagonal. 
Hence, if diagonal elements of $P$ are fixed, $\det P$ is maximized by setting all off-diagonal entries zero.
Thus, the optimal solution to the above problem is diagonal.
Writing $P=\diag(p_1,\cdots, p_n)$, the problem is decomposed as $n$ independent optimization problems, each of which minimizes $p_i-\frac{1}{\alpha} \log p_i$ subject to $0\leq p_i \leq \sigma_i^2$. It is easy to see that the optimal solution is $p_i=\min(1/\alpha, \sigma_i^2)$. This is the closed-form solution to (P-LGS) with $T=1$, and its pictorial interpretation is shown in Fig. \ref{fig:waterfilling}.
This solution also indicates the optimal sensing formula is given by $\by=C\bx+\bv, \bv\sim \mathcal{N}(0,V)$, where $C$ and $V$ satisfy
\ifdefined\FIGDOUBLECOL
\begin{align*}
C^\top V^{-1} C &= P^{-1} - (AP_0A^\top+W)^{-1} \\
&=\diag_{1\leq i \leq n} \left( \max \left\{0, \alpha-\frac{1}{\sigma_i^2} \right\}\right).
\end{align*}
\fi
\ifdefined\FIGSINGLECOL  
\[ C^\top V^{-1} C = P^{-1} - (AP_0A^\top+W)^{-1} =\diag_{1\leq i \leq n} \left( \max \{0, \alpha-\frac{1}{\sigma_i^2} \}\right). \]
\fi
In particular, we have $\text{dim}(\by)=\text{rank}(C^\top V^{-1} C)=\text{card}\{ i: \sigma_i^2>\frac{1}{\alpha}\}$, indicating that the optimal dimension of $y$ monotonically decreases as the ``price of information" $1/\alpha$ increases.

\section{Stationary problems}
\label{secstationary}

\begin{figure}[t]
    \centering
    \ifdefined\FIGDOUBLECOL \includegraphics[width=\columnwidth]{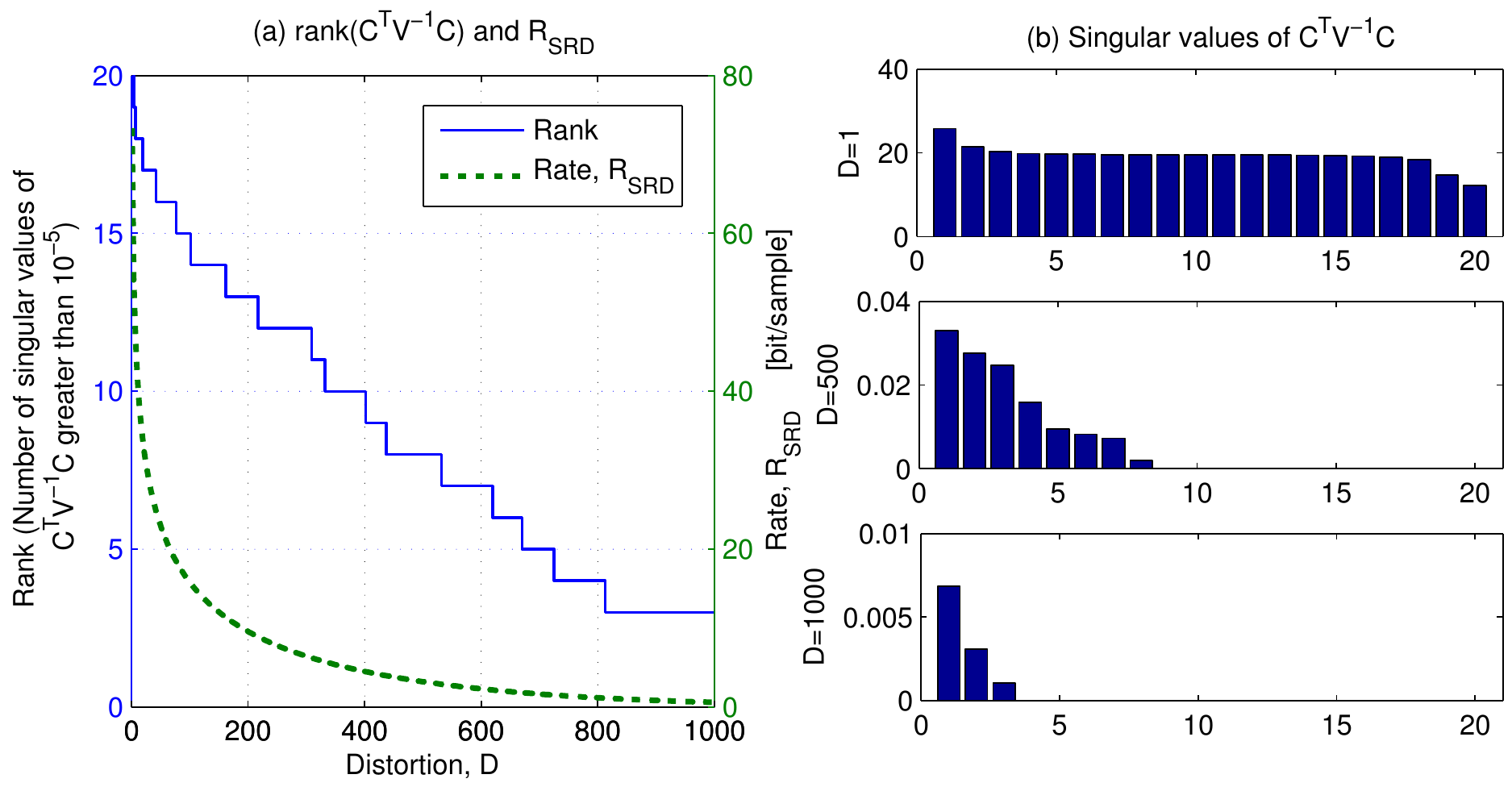} \fi
    \ifdefined\FIGSINGLECOL \includegraphics[width=.8\columnwidth]{rank.pdf} \fi
    \caption{Numerical experiments on rank monotonicity. 20 dimensional Gaussian process is randomly generated and $\textsf{SNR}=C^\top V^{-1}C$ is constructed for various $D$. Observe $\text{rank}(\textsf{SNR})$ tends to decrease as $D$ increase.}
    \label{fig:rank}
\end{figure}

\subsection{Sequential rate-distortion function}
We are often interested in infinite-horizon Gaussian SRD problems (\ref{statsrd}).
Assuming that $(A,\Theta)$ is a detectable pair, it can be shown that  (\ref{statsrd}) is equivalent to the infinite-horizon linear-Gaussian sensor design problem (\ref{p2hardconststat}) \cite{srdstationary}. Moreover, \cite{srdstationary} shows that  (\ref{statsrd}) and (\ref{p2hardconststat}) admit an optimal solution that can be realized as a composition of a \emph{time-invariant} sensor mechanism $\by_t=C\bx_t+\bv_t$ with i.i.d. process $\bv_t\sim\mathcal{N}(0,V)$ and a  \emph{time-invariant} Kalman filter.
Hence, it is enough to minimize the average cost per stage, which leads to the following  simpler problem. 
\begin{subequations}
\label{stationarylmi}
\begin{align}
R_{\text{SRD}}(D)= \min & \; - \frac{1}{2} \log \det \Pi + \frac{1}{2} \log \det W\\
\text{s.t.} & \;\; \Pi \succ 0 \\
& \;\; P \preceq APA^\top+W \\
& \;\; \text{Tr}(\Theta P) \leq D \\
& \;\; \left[ \begin{array}{cc}
P-\Pi & PA^\top \\
AP & APA^\top+W \end{array}\right]\succeq 0.
\end{align}
\end{subequations}
To confirm (\ref{stationarylmi}) is compatible with the existing result, consider a scalar system with $A=a, W=w, P=p$ and $\Theta=1$. In this case, a closed-form expression of the SRD function is known in the literature \cite{TatikondaThesis} \cite{derpich2012}, which is given by
\begin{equation}
\label{scalarsrdf}
R_{\text{SRD}}(D)=\min\left\{0, \frac{1}{2} \log(a^2+\frac{w}{D})\right\}.
\end{equation}
For a scalar system,  (\ref{stationarylmi}) further simplifies to 
\begin{subequations}
\label{statscalarsrd}
\begin{align}
\min & \;\; \log(a^2+\frac{w}{p}) \\
\text{s.t.} & \;\; 0 < p \leq a^2 p + w, \;\; p\leq D.
\end{align}
\end{subequations}
It is elementary to verify that the optimal value of (\ref{statscalarsrd}) is $\log(a^2+\frac{w}{D})$ if $1-\frac{w}{D}\leq a^2$, while it is $0$ if $0\leq a^2 < 1-\frac{w}{D}$. Hence, it can be compactly written as $\min\{0, \frac{1}{2} \log(a^2+\frac{w}{D})\}$, and the result recovers (\ref{scalarsrdf}).
Alternative representations of the SRD function (\ref{stationarylmi}) for stationary multi-dimensional Gauss-Markov processes when $\Theta=I$ are reported in \cite[Section IV-B]{tatikonda2004} and \cite[Section VI]{charalambous2014nonanticipative}.

\subsection{Rank monotonicity}
Using an optimal solution to (\ref{stationarylmi}) the optimal sensing matrices $C$ and $V$ are recovered from $C^\top V^{-1}C=P^{-1}-(APA^\top+W)^{-1}$. In particular, $\text{dim}(\by)=\text{rank}(C^\top V^{-1}C)$ determines the optimal dimension of the measurement vector. Similarly to the case of single stage problems, this rank has a tendency to decrease as $D$ increases. 
A typical numerical behavior is shown in Figure \ref{fig:rank}.
We do not attempt to prove the rank monotonicity here.

\section{Applications and related works}
\label{secapplications}
\subsection{Zero-delay source coding}
\label{secapplicationszerodelay}

{\upd 
SRD theory plays an important role in the rate analysis of zero-delay source coding schemes.
For each $t=1,2,\cdots$, let
\[
\mathcal{B}_t \subset \{0, 1, 00, 01, 10, 11, 000, \cdots \}
\]
be a set of variable-length uniquely decodable codewords.
Assume that ${\bf{b}}_t \in \mathcal{B}_t$ for $t=1,2,\cdots$, and let $l_t$ be the length of ${\bf{b}}_t$.
A zero-delay binary coder is a pair of a sequence of encoders $e_t(db_t|x^t,b^{t-1})$, i.e., stochastic kernels on $\mathcal{B}_t$ given $\mathcal{X}^t \times \mathcal{B}^{t-1}$, and a sequence of decoders $d_t(dz_t|b^t, z^{t-1})$, i.e., stochastic kernels on $\mathcal{Z}_t$ given $\mathcal{B}^t\times \mathcal{Z}^{t-1}$.
The \emph{zero-delay rate-distortion region} for the Gauss-Markov process (\ref{gmprocess_stat}) is the epigraph of the function
\begin{align*}
R_{\text{SRD}}^{\text{op}}(D)=&\inf_{\{\mathcal{B}_t, e_t, d_t\}_{t=1}^\infty} \; \limsup_{T\rightarrow \infty} \frac{1}{T}\sum_{t=1}^T \mathbb{E}(l_t) \\
& \hspace{5ex} \text{s.t.} \; \limsup_{T\rightarrow \infty} \frac{1}{T}\sum_{t=1}^T \mathbb{E}\|\bx_t-\bz_t\|_\Theta^2\leq D.
\end{align*}
The SRD function is a lower bound of the achievable rate. Indeed,  $R_{\text{SRD}}(D) \leq R_{\text{SRD}}^{\text{op}}(D) \; \forall D>0$ can be shown straightforwardly as
\begin{subequations}
\begin{align}
I(\bx^T \rightarrow \bz^T)&= I(\bx^T;\bz^T) \label{eqlength1}\\
&\leq I(\bx^T;{\bf{b}}^T) \label{eqlength2}\\
&=H({\bf{b}}^T)- H({\bf{b}}^T | \bx^T) \label{eqlength3}\\
&\leq H({\bf{b}}^T) \label{eqlength4}\\
&\leq \sum\nolimits_{t=1}^T H({\bf{b}}_t) \label{eqlength5}\\
&\leq \sum\nolimits_{t=1}^T \mathbb{E}(l_t) \label{eqlength6}
\end{align}
\end{subequations}
where (\ref{eqlength1}) holds since there is no feedback from the process $\{\bz_t\}$ to $\{\bx_t\}$ (Remark~\ref{remark_directedinfo}), (\ref{eqlength2}) follows from the data processing inequality, (\ref{eqlength4}) holds since conditional entropy is non-negative, and (\ref{eqlength5}) is due to the chain rule for entropy. 
The final inequality (\ref{eqlength6}) holds since the expected length of a uniquely decodable code is lower bounded by its entropy \cite[Theorem 5.3,1]{CoverThomas}.

In general, $R_{\text{SRD}}(D)$ and $R_{\text{SRD}}^{\text{op}}(D)$ do not coincide. Nevertheless, by constructing an appropriate entropy-coded dithered quantizer (ECDQ), it is shown in \cite{derpich2012} that $R_{\text{SRD}}^{\text{op}}(D)$ does not exceed $R_{\text{SRD}}(D)$ more than a constant due to the ``space-filling loss" of the lattice quantizer and the loss of entropy coding.
}

\subsection{Networked control theory}
{\upd
Zero-delay source/channel coding technologies are crucial in networked control systems \cite{nair2007,baillieul2007,hespanha2007survey,yuksel2013stochastic}. Gaussian SRD theory plays an important role in the LQG control problems with information theoretic constraints \cite{tatikonda2004}.
It is shown in \cite{silva2011} that an LQG control problem in which observed data must be transmitted to the controller over a noiseless binary channel is closely related to the LQG control problem with directed information constraints.
The latter problem is addressed in \cite{tanaka2015sdp} using the SDP-based algorithm presented in this paper. 
In \cite{tanaka2015sdp}, the problem is viewed as a sensor-controller joint design problem in which
directed information from the state process to the control input is minimized.\footnote{The problem considered in \cite{tanaka2015sdp} is different from the sensor-controller joint design problems considered in \cite{Bansal1989} and \cite{miller1997}.}
}

\subsection{Experimental design/Sensor scheduling}
\label{secexpdesign}
{\upd
In this subsection, we compare the linear-Gaussian sensor design problem (P-LGS) with different types of sensor design/selection problems considered in the literature. 

A problem of selecting the best subset of sensors to observe a random variable in order to minimize the estimation error and its convex relaxations are considered in \cite{vandenberghe1998}. A sensor selection problem for a linear dynamical system is considered in \cite{7151797}, where submodularity of the objective function is exploited.
Dynamic sensor scheduling problems are also considered in the literature. In \cite{vitus2012}, an efficient algorithm to explore branches of the scheduling tree is proposed. In \cite{gupta2006}, a stochastic sensor selection strategy that minimizes the expected error covariance is considered.

The linear-Gaussian sensor design problem (P-LGS) is different from these sensor selection/scheduling problems in that it is essentially a continuous optimization problem (since matrices $\{C_t, V_t\}_{t=1}^T$ can be freely chosen), and the objective is to minimize an information-theoretic cost (\ref{plgs1}).
}

\section{Numerical Simulations}
\label{secexample}
In this section, we consider two numerical examples to demonstrate how the SDP-based formulation of the Gaussian SRD problem can be used to calculate the minimal communication bandwidth required for the real-time estimation with desired accuracy.
\begin{figure}[t]
    \centering
    \ifdefined\FIGDOUBLECOL \includegraphics[width=0.8\columnwidth]{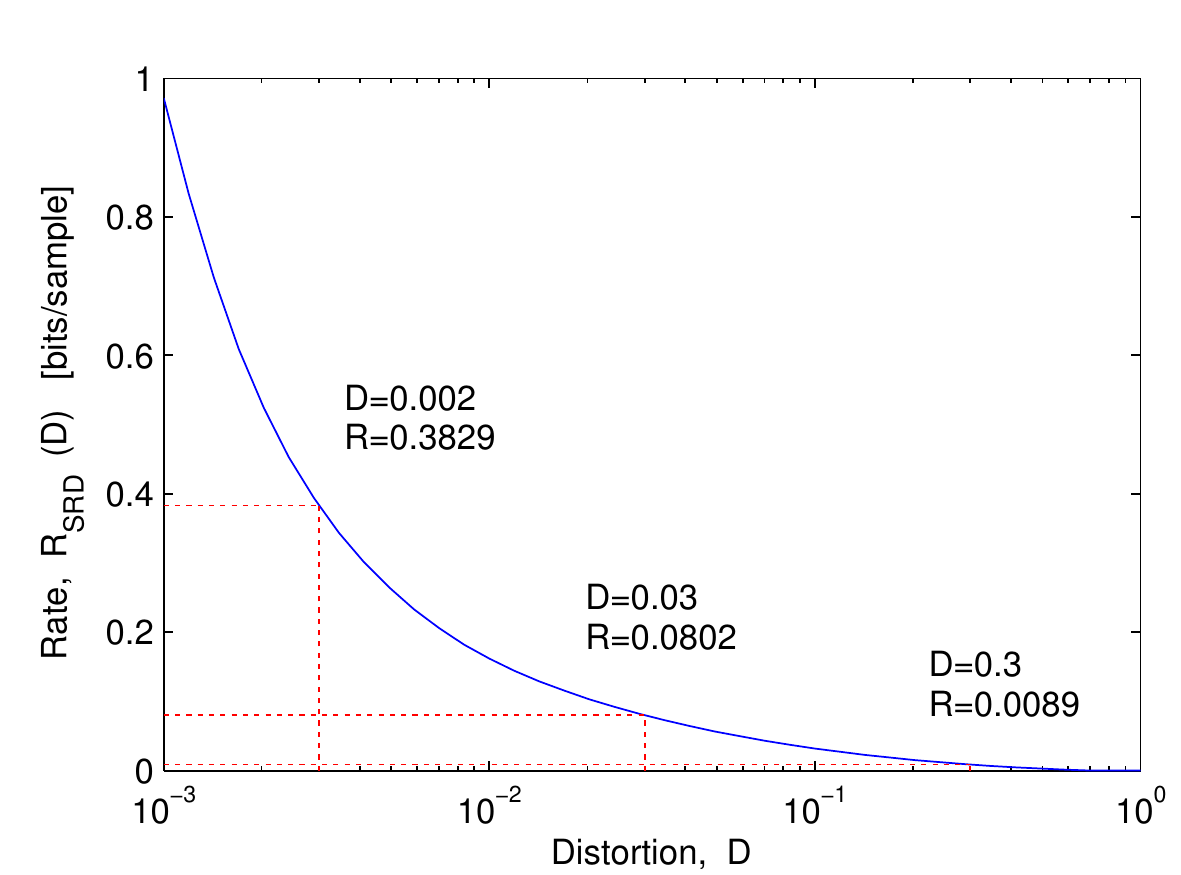} \fi
    \ifdefined\FIGSINGLECOL \includegraphics[width=0.5\columnwidth]{pend2_SRDF.pdf} \fi
    \caption{Sequential rate-distortion function for the noisy double pendulum. }
    \label{fig:pend_srdf}
\end{figure}
\begin{figure}[t]
    \centering
    \ifdefined\FIGDOUBLECOL \includegraphics[width=\columnwidth]{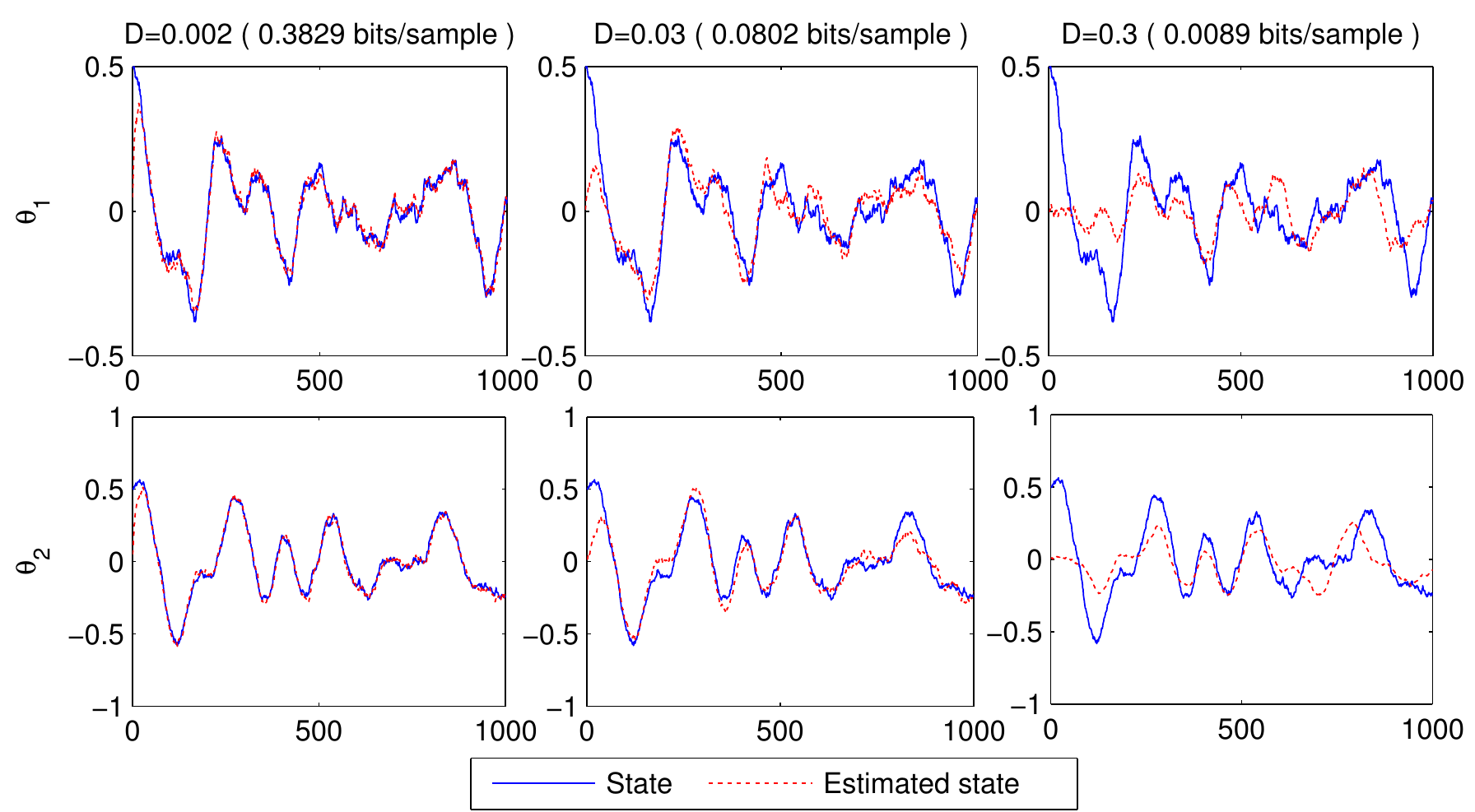} \fi
    \ifdefined\FIGSINGLECOL \includegraphics[width=.8\columnwidth]{pend2.pdf} \fi
    \caption{Tracking performance of the Kalman filter under different distortion constraints. (Tested on the same sample path of the noisy double pendulum.) }
    \label{fig:pend}
\end{figure}

\subsection{Optimal sensor design for double pendulum}
A linearized equation of motion of a double pendulum with friction and disturbance is given by
\[
\left[\!\! \begin{array}{c}
d\boldsymbol\theta_1 \\ d\boldsymbol\theta_2 \\ d\boldsymbol\omega_1 \\ d\boldsymbol\omega_2 \end{array}\!\! \right]\!\!=\!\!
\left[ \!\!\begin{array}{cccc} 0 \!\!\!&\!\!\! 0 \!\!\!&\!\!\! 1 \!\!\!&\!\!\! 0\\
0\!\!\!&\!\!\!0\!\!\!&\!\!\!0\!\!\!&\!\!\!1 \\
-\frac{(m_1+m_2)g}{m_1l_1} \!\!\!&\!\!\! \frac{m_2g}{m_1l_1} \!\!\!&\!\!\! -c_1 \!\!\!&\!\!\! 0\\
\frac{(m_1+m_2)g}{m_1l_2} \!\!\!&\!\!\! -\frac{(m_1+m_2)g}{m_1l_2} \!\!\!&\!\!\! 0 \!\!\!&\!\!\! -c_2\end{array}\!\!\right]\!\!
\left[\!\!\! \begin{array}{c}
\boldsymbol\theta_1 \\ \boldsymbol\theta_2 \\ \boldsymbol\omega_1 \\ \boldsymbol\omega_2 \end{array}\!\!\!\right]dt
 \!+\! d{\bf b}
\]
where ${\bf b}$ is a Brownian motion. 
We consider a discrete time model of the above equation of motion obtained through the Tustin transformation.
We are interested in designing a sensing model $\by_t=C\bx_t+\bv_t, \bv_t \sim \mathcal{N}(0,V)$ that optimally trades-off information cost and distortion level.\footnote{In practice, it is often the case that $\bx_t$ is partially observable through a given sensor mechanism. In such cases, the framework discussed in this paper is not appropriate. Instead, one can formulate an SRD problem for \emph{partially observable} Gauss-Markov processes. See \cite{tanaka2015zero} for details.}
We solve the stationary optimization problem (\ref{stationarylmi}) for this example with various values of $D$. The result is the sequential rate-distortion function shown in Figure \ref{fig:pend_srdf}. Finally, for every point on the trade-off curve, the optimal sensing matrices $C$ and $V$ are reconstructed, and the Kalman filter is designed base on them. Figure \ref{fig:pend} shows the trade-off between the distortion level and the tracking performance of the Kalman filter. When the distortion constraint is strict ($D=0.002$), the optimally designed sensor generates high rate information ($0.3829$ bits/sample) and the Kalman filter built on it tracks true state very well. When $D$ is large ($D=0.3$), the optimal sensing strategy chooses ``not to observe much", and the resulting Kalman filter shows poor tracking performance.

\subsection{Minimum down-link bandwidth for satellite attitude determination}
The equation of motion of the angular velocity vector of a spin-stabilized satellite linearized around the nominal angular velocity vector $(\omega_0, 0, 0)$ is
\[
\left[ \begin{array}{c} d\boldsymbol\omega_1 \\ d\boldsymbol\omega_2 \\ d\boldsymbol\omega_3 \end{array}\right]=
\left[ \begin{array}{ccc} 1 &0&0 \\
0 & 1 & \frac{I_3-I_1}{I_2}\omega_0 \\
0 & \frac{I_1-I_2}{I_3}\omega_0 & 1 \end{array}\right]
\left[ \begin{array}{c} \boldsymbol\omega_1 \\ \boldsymbol\omega_2 \\ \boldsymbol\omega_3 \end{array}\right]dt+d{\bf b}
\]
where ${\bf b}$ is a disturbance. 
Again, the equation of motion is converted to a discrete time model in the simulation.
Suppose that the satellite has on-board sensors that can accurately measure angular velocities, and the ground station needs to estimate them with some required accuracy (distortion) based on the transmitted data from the satellite.
Our interest is to determine the minimum down-link bit-rate that makes it possible, and identify what information needs to be transmitted to achieve this. 
Assume that the distortion constraints $D_t$ are time varying, but given \emph{a priori}.  (For instance, it must be kept small only when the satellite is in a mission.) 
The discussion so far indicates that the data to be transmitted is in the form of $\by_t=C_t \bx_t+\bv_t$ in order to minimize communication cost measured by $\sum_{t=1}^T I(\bx_t,\by_t|\by^{t-1})$.  
In Figure \ref{fig:sat}, a result of the SDP (\ref{optprob3}) is plotted, 
when the scheduling horizon is $T=120$ and a particular distortion constraint profile $D_t$ is given (shown in red in (a)).
The optimal down-link schedule shown in (b) requires no communication at all when the distortion constraint is met. As by-products of the SDP (\ref{optprob3}), the optimal scheduling of sensing matrices $C_t$ and noise covariances $V_t$ of $\bv_t$ can be also explicitly obtained.

\begin{figure}[t]
    \centering
    \ifdefined\FIGDOUBLECOL  \includegraphics[width=\columnwidth]{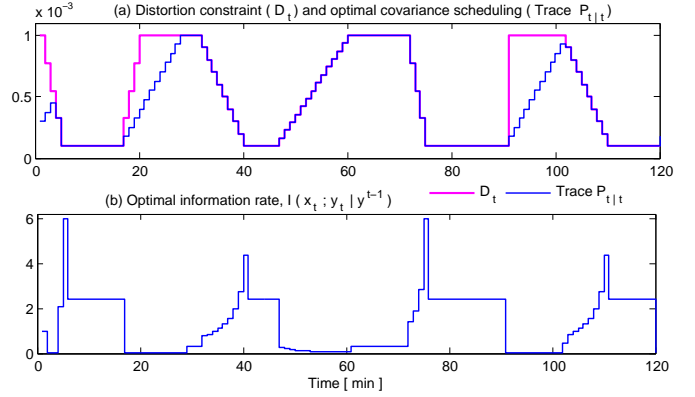} \fi
    \ifdefined\FIGSINGLECOL  \includegraphics[width=.8\columnwidth]{satellite3_crop.pdf} \fi
    \caption{Satellite attitude determination with time-varying distortion constraints. }
    \label{fig:sat}
\end{figure}

\section{Conclusion}
\label{secconclusion}
In this paper,  we revisited the ``sensor-estimator separation principle'' and showed that an optimal solution to the Gaussian SRD problem can be found by considering a related linear-Gaussian sensor design problem, which can be formulated as a determinant maximization problem with LMI constraints. The implication is that Gaussian SRD problems are efficiently solvable using standard SDP solvers.  We have also considered several potential applications of the Gaussian SRD problem and its relationship to real-time communication theory, networked control theory, and sensor scheduling problems.


%

\appendices
\section{Mathematical preliminaries}
\label{appmathprelim}
\subsection{Stochastic kernels}
 Let $\mathcal{X}, \mathcal{Y}$ be Euclidean spaces. A \emph{(Borel-measurable) stochastic kernel} on $\mathcal{Y}$ given $\mathcal{X}$ is a map $q_{\by|\bx}: \mathcal{B_Y}\times \mathcal{X}\rightarrow [0,1]$ such that $q_{\by|\bx}(\cdot|x)$ is a probability measure on $(\mathcal{Y}, \mathcal{B_Y})$ for every $x\in\mathcal{X}$, and $q_{\by|\bx}(A|\cdot)$ is a Borel measurable function for every $A\in\mathcal{B_Y}$.
For simplicity, a stochastic kernel on $\mathcal{Y}$ given $\mathcal{X}$ will be denoted by $q(dy|x)$. The following results can be found in Propositions 7.27 and 7.28 in \cite{bertsekas1978stochastic}.
\begin{lemma}
\label{lemstochastickernel}
Let $\mathcal{X},\mathcal{Y}$ be Euclidean spaces.
\begin{itemize}[leftmargin=*]
\item[(a)] Let $r$ be a probability measure on $(\mathcal{X},\mathcal{B_X})$, and $q(dy|x)$ be a Borel measurable stochastic kernel on $\mathcal{Y}$ given $\mathcal{X}$. Then, there exists a unique probability measure $p$ on $(\mathcal{X}\times \mathcal{Y}, \mathcal{B}_{\mathcal{X}\times \mathcal{Y}})$ such that
\begin{equation}
\label{kerneldecomposition}
p(B_X \!\times \! B_Y)\!=\!\!\int_{B_X}\!\!\!\! q(B_Y|x)r(dx) \; \forall B_X\!\in \!\mathcal{B_X}, B_Y\!\in \!\mathcal{B_Y}.
\end{equation}
\item[(b)] Let $p$ be a probability measure on $(\mathcal{X}\times \mathcal{Y}, \mathcal{B}_{\mathcal{X}\times \mathcal{Y}})$. Then there exists a Borel-measurable stochastic kernel $q(dy|x)$ on $\mathcal{Y}$ given $\mathcal{X}$ such that (\ref{kerneldecomposition}) holds, where $r$ is the marginal of $p$ on $\mathcal{X}$.
\end{itemize}
\end{lemma}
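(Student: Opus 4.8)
The plan is to read parts (a) and (b) as the two directions of a measure--kernel correspondence, proving each by routine measure theory; the only place where the Euclidean (hence standard Borel) structure of $\mathcal{Y}$ is genuinely used is part (b).

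For part (a) I would build the joint measure by integrating sections against $r$. For $E\in\mathcal{B}_{\mathcal{X}\times\mathcal{Y}}$, write $E_x\triangleq\{y\in\mathcal{Y}:(x,y)\in E\}$ for its $x$-section and define
\[
p(E)\triangleq \int_{\mathcal{X}} q(E_x\,|\,x)\, r(dx).
\]
First I would check that $x\mapsto q(E_x|x)$ is Borel measurable, so that the integral makes sense. This holds for rectangles $E=B_X\times B_Y$ by the defining measurability of the kernel, and the collection of $E$ for which it holds is a $\lambda$-system; since rectangles form a $\pi$-system generating $\mathcal{B}_{\mathcal{X}\times\mathcal{Y}}$, Dynkin's theorem extends measurability to the whole product $\sigma$-algebra. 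Next I would verify countable additivity of $p$: for disjoint $E^{(n)}$ the sections are disjoint, countable additivity of each measure $q(\cdot|x)$ gives additivity of the integrand, and the monotone convergence theorem moves the sum outside the $r$-integral. Evaluating on a rectangle recovers (\ref{kerneldecomposition}), and uniqueness is immediate because two probability measures agreeing on the generating $\pi$-system of rectangles coincide on $\mathcal{B}_{\mathcal{X}\times\mathcal{Y}}$.

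For part (b) the object to construct is a regular conditional distribution of $\by$ given $\bx$, and I would proceed in stages. For each fixed $B_Y\in\mathcal{B_Y}$ the set function $B_X\mapsto p(B_X\times B_Y)$ is absolutely continuous with respect to the marginal $r$, so the Radon--Nikodym theorem yields a measurable version $x\mapsto\tilde q(B_Y|x)$ with $p(B_X\times B_Y)=\int_{B_X}\tilde q(B_Y|x)\,r(dx)$. The difficulty is that these versions are defined only up to $r$-null sets, so a priori there is no single null set off which $\tilde q(\cdot|x)$ is simultaneously countably additive in $B_Y$. I would resolve this by restricting attention to a \emph{countable} generating algebra of $\mathcal{Y}$ (products of rational half-lines in the Euclidean space $\mathcal{Y}$): on it finite additivity, monotonicity, and continuity from above at $\emptyset$ amount to countably many almost-everywhere relations, hence fail only on a single $r$-null set $N$. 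For $x\notin N$ one has a countably additive premeasure on the algebra, which the Carath\'eodory extension theorem promotes to a genuine probability measure $q(\cdot|x)$ on $\mathcal{B_Y}$; for $x\in N$ I set $q(\cdot|x)$ to an arbitrary fixed probability measure. A final monotone-class argument shows $x\mapsto q(B_Y|x)$ is measurable and that (\ref{kerneldecomposition}) propagates from the generating rectangles to all of $\mathcal{B}_{\mathcal{X}\times\mathcal{Y}}$.

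The hard part will be exactly this simultaneous-null-set issue in part (b): assembling the separately chosen conditional probabilities $\tilde q(B_Y|x)$ into a bona fide kernel. It is precisely here that the hypothesis that $\mathcal{Y}$ is Euclidean---more generally standard Borel, admitting a countable generating algebra on which the Carath\'eodory machinery applies---is indispensable, since without such structure regular conditional distributions need not exist. Part (a), by contrast, needs no topological assumption and is essentially bookkeeping with the monotone-class theorem and monotone convergence.
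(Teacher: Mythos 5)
The paper does not actually prove this lemma: it quotes the statement directly from Propositions 7.27 and 7.28 of \cite{bertsekas1978stochastic}, so your self-contained argument is being compared against a citation to the standard reference, not against an in-paper proof. Your part (a) is exactly the standard treatment one finds there (sections, a $\pi$-$\lambda$ argument for measurability of $x\mapsto q(E_x|x)$, monotone convergence for countable additivity, uniqueness on the generating $\pi$-system of rectangles) and is correct as written.

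Part (b) has the right skeleton --- Radon--Nikodym versions for each fixed set, a countable generating class to control the null sets, then an extension theorem --- and you correctly identify the simultaneous-null-set issue as the crux. But one step is stated too loosely: ``continuity from above at $\emptyset$'' of the premeasure is a condition on \emph{all} decreasing sequences in the algebra, and even a countable algebra has uncountably many such sequences, so this is not literally ``countably many almost-everywhere relations.'' The standard repair, and the precise point where the Euclidean hypothesis does its work, is to take the algebra generated by half-open rational rectangles and impose only the genuinely countable family of a.e.\ conditions: finite additivity plus rational approximation of the form $\tilde q\left((a,b]\,\middle|\,x\right)=\lim_{a'\downarrow a}\tilde q\left((a',b]\,\middle|\,x\right)$ along rationals. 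For every $x$ outside the resulting single null set, countable additivity on the algebra then follows from the Heine--Borel compactness argument used in the Lebesgue--Stieltjes existence theorem, after which Carath\'eodory applies exactly as you say; alternatively one can transport everything to $\mathcal{Y}=\mathbb{R}$ by a Borel isomorphism and work with distribution functions, which is how the cited reference organizes the argument for general Borel spaces. With that emendation your proof is complete; what it buys over the paper's approach is self-containedness, while the citation buys the more general standard-Borel version of the result without any extra work.
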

Lemma \ref{lemstochastickernel} (a) guarantees the function $p$ defined on the algebra of measurable rectangles by (\ref{kerneldecomposition}) has a unique extension to the $\sigma$-algebra $\mathcal{B}_{\mathcal{X}\times\mathcal{Y}}$. For simplicity, the joint probability measure defined this way is denoted by
\begin{equation}
\label{kerneldecompositionsimple}
p(dx,dy)=q(dy|x)r(dx).
\end{equation}
Conversely, if the left hand side of (\ref{kerneldecompositionsimple}) is given, Lemma \ref{lemstochastickernel} (b) guarantees the existence of the decomposition on the right hand side.

\begin{definition}
\label{defcausal}
A stochastic kernel $q(dz^T|x^T)$ on $\mathcal{Z}^T$ given $\mathcal{X}^T$ is said to be  \emph{zero-delay} if it admits a factorization $q(dz^T|x^T)=\prod_{t=1}^T q(dz_t|z^{t-1},x^t)$.
\end{definition}
Once a zero-delay stochastic kernel is specified, successive applications of Lemma \ref{lemstochastickernel} (a) uniquely determine a joint probability measure by $q(dx^T,dz^T)=q(dx^T)\prod_{t=1}^T q(dz_t|z^{t-1},x^t)$.
The mutual information and the expectation in (P-SRD) is understood with respect to this joint probability measure.

Let $p$ and $q$ be probability measures on $\mathcal{X}=\mathbb{R}^n$. Whenever $p$ is absolutely continuous with respect to $q$ (denoted by $p \ll q$),  $\frac{dp}{dq}$ denotes the Radon-Nikodym derivative.
\begin{lemma}
\label{lemradon}
Let $\mathcal{X}$ and $\mathcal{Y}$ be Polish spaces.
\begin{itemize}[leftmargin=*]
\item[(a)] If $p,q,r$ are probability measures on $\mathcal{X}$ such that $r\ll q$ and $q\ll p$, then $r\ll p$ and $\frac{dr}{dp}=\frac{dr}{dq}\frac{dq}{dp} \; p-a.e.$. 
If $q \ll p$ and $p \ll q$, then $\frac{dp}{dq}\frac{dq}{dp}=1 \; a.e.$.
\item[(b)] Let $p_{\bx,\by}$ be a joint probability measure on $\mathcal{X}\times \mathcal{Y}$, and $p_\bx, p_\by$ be its marginals. Let $p_{\bx|\by}$ be a Borel-measurable stochastic kernel such that
\begin{equation}
\label{lemradon1}
p_{\bx,\by}(B_X\times B_Y)=\int_{B_Y} p_{\bx|\by}(B_X|y)p_\by(dy)
\end{equation}
for every $B_X\in \mathcal{B_X}, B_Y\in \mathcal{B_Y}$. If $p_{\bx,\by}\ll p_\bx \times p_\by$, then 
\begin{equation}
\frac{dp_{\bx,\by}}{d(p_\bx\times p_\by)}=\frac{dp_{\bx|\by}}{dp_\bx} \; p_\by - a.e..
\end{equation}
\end{itemize}
\end{lemma}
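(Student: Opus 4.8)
The plan is to handle the two parts separately: part~(a) is the classical chain rule for Radon--Nikodym derivatives, while part~(b) identifies the joint density with the conditional density via a disintegration argument.

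For part~(a), I would first dispatch transitivity: if $p(A)=0$ then $q(A)=0$ because $q\ll p$, and hence $r(A)=0$ because $r\ll q$, so $r\ll p$. For the chain rule itself, the workhorse is the elementary change-of-measure identity: whenever $q\ll p$ with $h=\frac{dq}{dp}$, one has $\int g\,dq=\int gh\,dp$ for every nonnegative Borel $g$. I would prove this by checking it on indicators (which is precisely the defining property of $h$), extending to simple functions by linearity, and to general nonnegative $g$ by monotone convergence. Applying it with $g=\frac{dr}{dq}\mathbf{1}_A$ yields $r(A)=\int_A\frac{dr}{dq}\,dq=\int_A\frac{dr}{dq}\frac{dq}{dp}\,dp$ for every measurable $A$, and uniqueness of the Radon--Nikodym derivative forces $\frac{dr}{dp}=\frac{dr}{dq}\frac{dq}{dp}$ $p$-a.e. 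The second identity then follows by specializing $r=p$: under mutual absolute continuity the chain rule through $q$ gives $1=\frac{dp}{dp}=\frac{dp}{dq}\frac{dq}{dp}$ $p$-a.e., and since $p$ and $q$ share their null sets the equality holds a.e. with respect to either measure.

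For part~(b), I would write $f=\frac{dp_{\bx,\by}}{d(p_\bx\times p_\by)}$ for the joint density and compute $p_{\bx,\by}(B_X\times B_Y)$ in two ways. On one hand, Tonelli applied to the product measure gives $p_{\bx,\by}(B_X\times B_Y)=\int_{B_Y}\big(\int_{B_X}f(x,y)\,p_\bx(dx)\big)\,p_\by(dy)$. On the other hand, the defining relation (\ref{lemradon1}) gives $p_{\bx,\by}(B_X\times B_Y)=\int_{B_Y}p_{\bx|\by}(B_X|y)\,p_\by(dy)$. Since these agree for every $B_Y\in\mathcal{B_Y}$, for each fixed $B_X$ the two integrands coincide $p_\by$-a.e., i.e. $\int_{B_X}f(x,y)\,p_\bx(dx)=p_{\bx|\by}(B_X|y)$ off a $p_\by$-null set $N_{B_X}$. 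The goal is then to read this as saying $f(\cdot,y)$ is the $p_\bx$-density of $p_{\bx|\by}(\cdot|y)$ for $p_\by$-a.e. $y$, which is exactly the asserted identity.

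The hard part will be the last step, because the null set $N_{B_X}$ depends on $B_X$, whereas the conclusion demands a single null set valid for all $B_X$ simultaneously. To get past this I would use that $\mathcal{X}$ is Polish, so $\mathcal{B_X}$ is countably generated by an algebra $\mathcal{A}$ (for instance finite unions of boxes with rational vertices); setting $N=\bigcup_{A\in\mathcal{A}}N_A$ produces a single $p_\by$-null set, and for each $y\notin N$ the two set functions $B_X\mapsto\int_{B_X}f(x,y)\,p_\bx(dx)$ and $B_X\mapsto p_{\bx|\by}(B_X|y)$ are finite measures agreeing on the generating algebra, hence equal on all of $\mathcal{B_X}$ by the uniqueness part of Carath\'eodory's extension theorem (equivalently, a monotone-class argument). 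This shows $p_{\bx|\by}(\cdot|y)\ll p_\bx$ with density $f(\cdot,y)$ for $p_\by$-a.e. $y$, which completes the proof.
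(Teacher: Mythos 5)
Your proposal is correct and takes essentially the same route as the paper: for part (a) the paper simply cites the standard chain rule (Proposition 3.9 of Folland) that you prove from scratch, and for part (b) the paper performs the identical Fubini/Tonelli computation and substitution into (\ref{lemradon1}). The only substantive difference is that you explicitly resolve the final step---obtaining a single $p_\by$-null set valid for all $B_X$ via a countable generating algebra of $\mathcal{B_X}$ and the uniqueness theorem for finite measures---whereas the paper's proof compresses this into the one-line conclusion ``Thus $f(x,y)=\frac{dp_{\bx|\by}}{dp_\bx}$ $p_\by$-a.e.''; this is a welcome refinement of the same argument rather than a different approach.
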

\begin{proof} For (a),  see Proposition 3.9 in \cite{folland1999real}. To prove (b), let $f(x,y)=\frac{dp_{\bx,\by}}{d(p_\bx\times p_\by)}$. By definition,
\begin{align*}
p_{\bx,\by}(B_X\times B_Y)&=\int_{B_X\times B_Y} f(x,y)(p_\bx\times p_\by)(dx,dy) \\
&=\int_{B_Y}\left(\int_{B_X} f(x,y)p_\bx(dx) \right)p_\by(dy)
\end{align*}
Since clearly $f\in L^1(p_\bx \times p_\by)$, the Fubini's theorem \cite{folland1999real} is applicable in the second line. Substituting this expression into (\ref{lemradon1}), we have
$\int_{B_X} f(x,y)p_\bx (dx)=p_{\bx|\by} (B_X|y) \; p_\by-a.e.$. Thus $f(x,y)=\frac{dp_{\bx|\by}}{dp_\bx} \; p_\by - a.e.$.
\end{proof}

\subsection{Information theoretic quantities}
The relative entropy, also known as the Kullback--Leibler divergence, from $p$ to $q$ is defined by
\[
D_{\text{KL}}(p\|q)=\begin{cases}
\int \log \frac{dp}{dq}dq & \text{if } p \ll q \\
+\infty & \text{otherwise.}
\end{cases}
\]
Relative entropy is always nonnegative.
Given two stochastic kernels $p_{\bx|\by}(dx|y)$ and $q_{\bx|\by}(dx|y)$ on $\mathcal{X}$ given $\mathcal{Y}$, and a probability measure $r_\by(dy)$, the conditional relative entropy is defined by
\[
D_{\text{KL}}(p_{\bx|\by}\|q_{\bx|\by} | r_\by)\!=\!\int_\mathcal{Y} D_{\text{KL}}(p_{\bx|\by}(dx|y) \| q_{\bx|\by}(dx|y)) r_\by(dy).
\]
Suppose $\mathcal{X}, \mathcal{Y}, \mathcal{Z}$ are Euclidean spaces, and $q_{\bx,\by}$ is a joint probability measure on $\mathcal{X}\times\mathcal{Y}$. Let $q_\bx$, $q_\by$ be its marginals, and $q_\bx \times q_\by$ be the product measure. The mutual information between $\bx$ and $\by$ is defined by $I(\bx;\by)=D_{KL}(q_{\bx,\by}||q_\bx\times q_\by)$.
Given a joint probability measure $q(dx,dy,dz)$, the conditional mutual information is defined by
\[
I(\bx;\by|\bz)=D_{\text{KL}}(q_{\bx,\by|\bz}\| q_{\bx|\by}\times q_{\by|\bz} | q_\bz).
\]
Suppose $\mathcal{X}=\mathbb{R}^n$, and $\bx$ is a $(\mathcal{X},\mathcal{B_X})$-valued random variable with probability measure $q_\bx$. Let $\lambda$ be the Lebesgue measure on $\mathcal{X}$ restricted to $\mathcal{B_X}$. The differential entropy of $\bx$ is defined by 
\[
h(\bx)=\begin{cases}
-\int \log \frac{dq_\bx}{d\lambda}dq_\bx & \text{if } q_\bx \ll \lambda \\
-\infty & \text{otherwise.}
\end{cases}
\]

\section{Proof of Lemma \ref{lemstep1}}
\label{secprooflemstep1}
(i):
Given a sequence of stochastic kernels $\gamma=\otimes_{t=1}^T q(dz_t|x^t, z^{t-1})\in \Gamma$ attaining cost $f_{\text{SRD}}<+\infty$ in (P-SRD), we are going to construct a sequence of linear-Gaussian stochastic kernels of the form (\ref{pi3lin}) that incurs no greater cost  than $f_{\text{SRD}}$ in (P-1).
Let $q(dx^T,dz^T)$ be the joint probability measure generated by $\gamma$ and the underlying Gauss-Markov process (\ref{gmprocess}). 
Without loss of generality, we can assume $q(dx^T,dz^T)$ has zero-mean. Otherwise, it is possible to choose an alternative feasible policy $\tilde{\gamma}\in\Gamma$ by linearly shifting $\gamma$ so that the resulting probability measure  $\tilde{q}(dx^T,dz^T)$ has zero-mean.
This operation does not increase the mutual information terms in the objective function.

Let $r(dx^T,dz^T)$ be a zero-mean, jointly Gaussian probability measure with the same covariance as $q(dx^T,dz^T)$. Let $E_t\bx_t+F_{t,t-1}\bz_{t-1}+\cdots+F_{t,1}\bz_1$ be the least mean square error estimate of $\bz_t$ given $\bx_t, \bz^{t-1}$ in $r(dx^T,dz^T)$, and let $\Gamma_t$ be the covariance matrix of the corresponding estimation error. 
Let $\{\bg_t\}$ be a sequence of Gaussian random vectors such that $\bg_t$ is independent of $\bx_0, \bw^t, \bg^{t-1}$ and $\bg_t\sim \mathcal{N}(0, \Gamma_t)$. For every $t=1,\cdots, T$, define a stochastic kernel $s(dz_t|x_t,z^{t-1})$ by
\[
\bz_t=E_t\bx_t+F_{t,t-1}\bz_{t-1}+\cdots+F_{t,1}\bz_1+\bg_t.
\]
We set $\gamma_1=\otimes_{t=1}^T s(dz_t|x_t,z^{t-1})\in \Gamma_1$ as a candidate solution to (P-1). 
By construction of $s(dz_t|x_t,z^{t-1})$, the following relation holds for every $t=1,\cdots, T$:
\begin{equation}
\label{kernelsr}
r(dx_t,dz^t)=s(dz_t|x_t,z^{t-1})r(dx_t,dz^{t-1}).
\end{equation}
Let $s(dx^T,dz^T)$ be a jointly Gaussian measure defined by $\{s(dz_t|x_t, z^{t-1}) \}_{t=1}^T$ and the process (\ref{gmprocess}). That is, it is a joint measure recursively defined by
\begin{subequations}
\begin{align}
& s(dx^t,dz^{t-1})=q(dx_t|x_{t-1})s(dx^{t-1}, dz^{t-1}) \label{sjoint1} \\
& s(dx^t,dz^t)=s(dz_t|x_t,z^{t-1})s(dx^t,dz^{t-1}). \label{sjoint2}
\end{align}
\end{subequations}
where $q(dx_t|x_{t-1})$ is a stochastic kernel defined by (\ref{gmprocess}).

Notice the following fact about $r(dx^T,dz^T)$.
\begin{proposition}\label{claim1}
For $t=2,\cdots, T$, let $r(dx_{t-1},dz^{t-1})$ and $r(dx_{t-1},dx_t,dz^{t-1})$ be marginals of $r(dx^T,dz^T)$. Then
 $r(dx_{t-1},dx_t,dz^{t-1})=q(dx_t|x_{t-1})r(dx_{t-1},dz^{t-1})$.
\end{proposition}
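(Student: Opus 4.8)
The plan is to prove the stronger pointwise identity that the one-step conditional kernel under the Gaussian measure $r$ coincides with the Markov transition kernel of the source, namely $r(dx_t \mid x_{t-1}, z^{t-1}) = q(dx_t \mid x_{t-1})$, and then multiply both sides by the marginal $r(dx_{t-1}, dz^{t-1})$ (via Lemma~\ref{lemstochastickernel}) to recover the claimed joint factorization $r(dx_{t-1}, dx_t, dz^{t-1}) = q(dx_t \mid x_{t-1}) r(dx_{t-1}, dz^{t-1})$. Since $q(dx_t \mid x_{t-1}) = \mathcal{N}(A_{t-1}x_{t-1}, W_{t-1})$ and, because $r$ is jointly Gaussian, the kernel $r(dx_t \mid x_{t-1}, z^{t-1})$ is also Gaussian, it suffices to match their conditional means and conditional covariances.

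First I would extract the relevant structure from the \emph{original} measure $q$. Because the policy $\gamma \in \Gamma$ is causal, $\bz^{t-1}$ is a function of $\bx^{t-1}$ together with the kernel randomness, all of which are independent of the innovation $\bw_{t-1}$; combined with the dynamics (\ref{gmprocess}), this yields that $\bw_{t-1} = \bx_t - A_{t-1}\bx_{t-1}$ is independent of $(\bx_{t-1}, \bz^{t-1})$ under $q$. Consequently $\mathbb{E}_q[\bx_t \mid \bx_{t-1}, \bz^{t-1}] = A_{t-1}\bx_{t-1}$, and the residual $\bx_t - A_{t-1}\bx_{t-1}$ has covariance $W_{t-1}$ and is uncorrelated with $(\bx_{t-1}, \bz^{t-1})$.

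The crucial observation is that $A_{t-1}\bx_{t-1}$ is a \emph{linear} function of the conditioning variables, so this conditional expectation must coincide with the linear minimum-mean-square-error estimate of $\bx_t$ given $(\bx_{t-1}, \bz^{t-1})$; that estimate, together with its error covariance, is determined solely by the second-order statistics of $(\bx_t, \bx_{t-1}, \bz^{t-1})$. Since $r$ is zero-mean, jointly Gaussian, and shares these second moments with $q$, the Gaussian conditional mean under $r$ equals the same linear estimate $A_{t-1}\bx_{t-1}$, while the Gaussian conditional covariance under $r$ equals the same error covariance $W_{t-1}$. Matching the means and covariances of the two Gaussian kernels then gives $r(dx_t \mid x_{t-1}, z^{t-1}) = \mathcal{N}(A_{t-1}x_{t-1}, W_{t-1}) = q(dx_t \mid x_{t-1})$, completing the argument.

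I expect the main obstacle to be purely technical: the kernels defining $\{\bz_t\}$ may inject degenerate Gaussian noise $\bg_t$, so that, writing $U = (\bx_{t-1}, \bz^{t-1})$, the covariance of $U$ can be singular and the standard conditioning formula involving $\Sigma_{x_t U}\Sigma_{UU}^{-1}$ is not literally available. I would circumvent this by characterizing both the conditional mean and the conditional error covariance through the orthogonal-projection (normal-equation) description of linear MMSE, which is well defined without invertibility and depends on $q$ only through the shared covariance. A minor additional point is the reduction to the zero-mean case, which is already in force since $r$ is introduced as a zero-mean Gaussian measure.
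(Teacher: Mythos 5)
Your proof is correct, and it targets the same intermediate identity that the paper's proof establishes, namely $r(dx_t \mid x_{t-1},z^{t-1})=q(dx_t\mid x_{t-1})$, but it gets there by a genuinely different route. The paper decomposes the claim into two steps: (i) it invokes Lemma 3.2 of \cite{kim2010feedback} to transfer the Markov chain $\bz^{t-1}$ -- $\bx_{t-1}$ -- $\bx_t$ from $q$ to the Gaussian surrogate $r$, giving $r(dx_t\mid x_{t-1},z^{t-1})=r(dx_t\mid x_{t-1})$; and (ii) it observes that $(\bx_{t-1},\bx_t)$ is already jointly Gaussian under $q$, so matching second moments forces $r(dx_{t-1},dx_t)=q(dx_{t-1},dx_t)$ and hence $r(dx_t\mid x_{t-1})=q(dx_t\mid x_{t-1})$. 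You instead prove the kernel identity in one shot: causality of the policy makes the innovation $\bw_{t-1}=\bx_t-A_{t-1}\bx_{t-1}$ independent of $(\bx_{t-1},\bz^{t-1})$ under $q$, so the true conditional mean is the \emph{linear} map $A_{t-1}x_{t-1}$ with constant error covariance $W_{t-1}$; since a conditional mean that happens to be linear coincides with the linear MMSE estimate, and linear MMSE estimates and their error covariances are determined by second moments alone (via the normal equations, which also dispose of the degenerate-covariance issue you flag), the Gaussian conditional law under $r$ must be $\mathcal{N}(A_{t-1}x_{t-1},W_{t-1})=q(dx_t\mid x_{t-1})$. The paper's route is shorter but leans on an external lemma; your route is self-contained, produces the explicit form of the kernel, and makes visible exactly the structure --- linearity of the true conditional mean and Gaussianity of the source transition --- that legitimizes the Gaussian transfer, structure that is genuinely needed since conditional-independence relations are \emph{not} preserved under second-moment Gaussian matching for arbitrary nonlinear Markov triples. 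The final disintegration step via Lemma~\ref{lemstochastickernel} is common to both arguments.
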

\begin{proof}
Since $\bz^{t-1}$ -- $\bx_{t-1}$ -- $\bx_t$ forms a Markov chain in the measure $q(dx^T,dz^T)$, by Lemma 3.2 of \cite{kim2010feedback}, $\bz^{t-1}$ -- $\bx_{t-1}$ -- $\bx_t$ forms a Markov chain under $r(dx^T,dz^T)$ as well.
Hence under $r$, $\bx_t$ is independent of $\bz^{t-1}$ given $\bx_{t-1}$, or $r(dx_t|x_{t-1},z^{t-1})=r(dx_t|x_{t-1})$. Moreover, since $q(dx_t,dx_{t-1})$ is a Gaussian distribution, and since $r$ is defined to be a Gaussian distribution with the same covariance as $q$, $r(dx_t,dx_{t-1})$ and $q(dx_t,dx_{t-1})$ have the same joint distribution. Hence, $q(dx_t|x_{t-1})=r(dx_t|x_{t-1})$. Thus, $r(dx_t|x_{t-1},z^{t-1})=q(dx_t|x_{t-1})$, proving the claim.
\end{proof}
In general, $r(dx^T,dz^T)$ and $s(dx^T,dz^T)$ are different joint probability measures. However, we have the following result.
\begin{proposition}\label{claim2}
For every $t=1,\cdots,T$, let $r(dx_t,dz^t)$ and $s(dx_t,dz^t)$ be marginals of $r(dx^T,dz^T)$ and $s(dx^T,dz^T)$ respectively. Then $r(dx_t,dz^t)=s(dx_t,dz^t)$.
\end{proposition}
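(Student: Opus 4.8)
The plan is to prove $r(dx_t,dz^t)=s(dx_t,dz^t)$ by induction on $t$, exploiting the fact that both $r$ and $s$ propagate the state coordinate through the \emph{same} Markov kernel $q(dx_t|x_{t-1})$ and the reproduction coordinate through the \emph{same} kernel $s(dz_t|x_t,z^{t-1})$. The conceptual content sits entirely in Proposition~\ref{claim1}; everything else is careful bookkeeping of marginalizations of measures that are built as products of stochastic kernels, for which the disintegration results in Lemma~\ref{lemstochastickernel} justify each step.

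For the base case $t=1$, I would first observe that the $\bx$-marginals of both measures coincide. By the recursive construction (\ref{sjoint1})--(\ref{sjoint2}), the marginal $s(dx^T)$ is exactly the Gauss--Markov law $q(dx^T)$, since the state coordinate of $s$ evolves only through $q(dx_t|x_{t-1})$ starting from $q(dx_0)$; on the other hand $r$ is zero-mean Gaussian with the same covariance as $q$, and $q(dx^T)$ is already Gaussian, so $r(dx^T)=q(dx^T)=s(dx^T)$, and in particular $r(dx_1)=s(dx_1)$. Applying (\ref{kernelsr}) and the $t=1$ case of (\ref{sjoint2}) (where $z^0$ is void), both $r(dx_1,dz_1)$ and $s(dx_1,dz_1)$ equal $s(dz_1|x_1)$ times this common marginal, giving $r(dx_1,dz_1)=s(dx_1,dz_1)$.

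For the inductive step at $t\geq 2$, assume $r(dx_{t-1},dz^{t-1})=s(dx_{t-1},dz^{t-1})$. I would proceed in two stages. First I establish the predictive identity $r(dx_t,dz^{t-1})=s(dx_t,dz^{t-1})$: marginalizing $x^{t-2}$ out of (\ref{sjoint1}) gives $s(dx_{t-1},dx_t,dz^{t-1})=q(dx_t|x_{t-1})s(dx_{t-1},dz^{t-1})$, while Proposition~\ref{claim1} supplies the matching expression $r(dx_{t-1},dx_t,dz^{t-1})=q(dx_t|x_{t-1})r(dx_{t-1},dz^{t-1})$. The inductive hypothesis makes the two right-hand sides identical, so the two three-variable measures agree, and integrating out $x_{t-1}$ yields the claim. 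Second, I apply the common reproduction kernel: (\ref{kernelsr}) reads $r(dx_t,dz^t)=s(dz_t|x_t,z^{t-1})r(dx_t,dz^{t-1})$, and marginalizing (\ref{sjoint2}) reads $s(dx_t,dz^t)=s(dz_t|x_t,z^{t-1})s(dx_t,dz^{t-1})$, so the predictive identity closes the induction.

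The step I expect to be the main obstacle is the predictive identity, which is precisely where Proposition~\ref{claim1} is needed: it transfers the Markov chain $\bz^{t-1}$--$\bx_{t-1}$--$\bx_t$ from $q$ to $r$ and thereby forces $r$'s state transition to coincide with $q(dx_t|x_{t-1})$, matching the way $s$ was constructed. The point to watch throughout is that every manipulation must be an honest marginalization of a jointly defined probability measure rather than a pointwise density identity, so that no absolute-continuity or density-existence hypothesis is invoked; routing the argument through the measure-level statement of Proposition~\ref{claim1} (in place of conditional densities) is exactly what keeps the proof valid in full generality.
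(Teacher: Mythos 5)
Your proposal is correct and follows essentially the same route as the paper: induction on $t$, with the base case reduced to $r(dx_1)=q(dx_1)$, and the inductive step carried by Proposition~\ref{claim1} together with the defining relations (\ref{kernelsr}), (\ref{sjoint1}), (\ref{sjoint2}). The only cosmetic difference is that you isolate the predictive identity $r(dx_t,dz^{t-1})=s(dx_t,dz^{t-1})$ as an intermediate stage, whereas the paper runs the same marginalization steps as one uninterrupted chain of equalities from $s(dx_k,dz^k)$ to $r(dx_k,dz^k)$.
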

\begin{proof}
By definitions,
\begin{align*}
&r(dx_1,dz_1)=s(dz_1|x_1)r(dx_1) \\
&s(dx_1,dz_1)=s(dz_1|x_1)q(dx_1).
\end{align*}
Since $r(dx_1)=q(dx_1)$, $r(dx_1,dz_1)=s(dx_1,dz_1)$ holds.
So assume that the claim holds for $t=k-1$. Then
\ifdefined\FIGDOUBLECOL
\begin{subequations}
\begin{align}
&s(dx_k, dz^k) \nonumber  \\
&=s(dz_k|x_k,z^{k-1}) s(dx_k,dz^{k\!-\!1}) \label{preq0} \\
&=s(dz_k|x_k,z^{k-1}) \!\int_{\mathcal{X}_{k-1}} \!\!\!\!\!\!\! s(dx_{k-1}, dx_k, dz^{k-1})  \nonumber \\
&=s(dz_k|x_k,z^{k-1}) \!\int_{\mathcal{X}_{k-1}} \!\!\!\!\!\!\! q(dx_k|x_{k-1}) s(dx_{k-1}, dz^{k-1})  \label{preq1} \\
&=s(dz_k|x_k,z^{k-1}) \!\int_{\mathcal{X}_{k-1}} \!\!\!\!\!\!\! q(dx_k|x_{k-1}) r(dx_{k-1},dz^{k-1})  \label{preq2} \\
&=s(dz_k|x_k,z^{k-1}) \!\int_{\mathcal{X}_{k-1}} \!\!\!\!\!\!\! r(dx_{k-1}, dx_k, dz^{k-1})  \label{preq3} \\
&=s(dz_k|x_k,z^{k-1}) r(dx_k, dz^{k-1}) \nonumber \\
&=r(dx_k, dz^k). \label{preq4}
\end{align}
\end{subequations}
\fi
\ifdefined\FIGSINGLECOL 
\begin{subequations}
\begin{align}
s(dx_k, z^k) &=s(dz_k|x_k,z^{k-1}) s(dx_k,dz^{k\!-\!1}) \label{preq0} \\
&=s(dz_k|x_k,z^{k-1}) \!\int_{\mathcal{X}_{k-1}} \!\!\!\!\!\!\! s(dx_{k-1}, dx_k, dz^{k-1})  \nonumber \\
&=s(dz_k|x_k,z^{k-1}) \!\int_{\mathcal{X}_{k-1}} \!\!\!\!\!\!\! q(dx_k|x_{k-1}) s(dx_{k-1}, dz^{k-1})  \label{preq1} \\
&=s(dz_k|x_k,z^{k-1}) \!\int_{\mathcal{X}_{k-1}} \!\!\!\!\!\!\! q(dx_k|x_{k-1}) r(dx_{k-1},dz^{k-1})  \label{preq2} \\
&=s(dz_k|x_k,z^{k-1}) \!\int_{\mathcal{X}_{k-1}} \!\!\!\!\!\!\! r(dx_{k-1}, dx_k, dz^{k-1})  \label{preq3} \\
&=s(dz_k|x_k,z^{k-1}) r(dx_k, dz^{k-1}) \nonumber \\
&=r(dx_k, dz^k). \label{preq4}
\end{align}
\end{subequations}
\fi
The first step (\ref{preq0}) follows from the definition (\ref{sjoint2}).
Step (\ref{preq1}) also follows from the definition (\ref{sjoint1}).
In (\ref{preq2}), the induction assumption $s(dx_{k-1},dz^{k-1})=r(dx_{k-1},dz^{k-1})$ was used.
The result of Proposition~\ref{claim1} was used in (\ref{preq3}).
The final step (\ref{preq4}) is due to (\ref{kernelsr}).
\end{proof}

To prove that $\gamma_1=\otimes_{t=1}^T s(dz_t|x_t,z^{t-1})$ incurs no greater cost than $f_{\text{SRD}}$ in (P-1), notice that replacing $q(dz_t|x^t,z^{t-1})$ with $s(dz_t|x_t,z^{t-1})$ will not change the distortion:
\begin{align}
\mathbb{E}_q \|\bx_t-\bz_t\|_{\Theta_t}^2&= \int \|x_t-z_t\|_{\Theta_t}^2 q(dx_t, dz^t) \nonumber \\
&= \int \|x_t-z_t\|_{\Theta_t}^2 r(dx_t, dz^t) \label{pexpeq1}  \\
&= \int \|x_t-z_t\|_{\Theta_t}^2 s(dx_t, dz^t) \label{pexpeq2} \\
&= \mathbb{E}_s \|\bx_t-\bz_t\|_{\Theta_t}^2. \nonumber
\end{align}
Equality (\ref{pexpeq1}) holds since $q$ and $r$ have the same second order properties. The result of Proposition~\ref{claim2} was used in step (\ref{pexpeq2}). 

Next, we show that the mutual information never increases by this replacement. 
\begin{proposition}\label{claim3} If $I_q(\bx^T;\bz^T) <+\infty$, then
$I_{r}(\bx^T;\bz^T) \leq I_q(\bx^T;\bz^T)$. 
\end{proposition}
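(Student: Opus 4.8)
The plan is to establish the equivalent inequality $I_r(\bx^T;\bz^T)\le I_q(\bx^T;\bz^T)$ by first deriving the exact identity
\begin{equation*}
I_q(\bx^T;\bz^T)-I_r(\bx^T;\bz^T)=D_{\text{KL}}(q\|r)-D_{\text{KL}}(q_{\bx^T}\|r_{\bx^T})-D_{\text{KL}}(q_{\bz^T}\|r_{\bz^T}),
\end{equation*}
where $q,r$ denote the two joint laws $q(dx^T,dz^T)$, $r(dx^T,dz^T)$ and $q_{\bx^T},q_{\bz^T},r_{\bx^T},r_{\bz^T}$ their marginals, and then observing that the right-hand side is nonnegative. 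The decisive structural fact, which I would isolate at the outset, is that the source $\{\bx_t\}$ is \emph{already} Gaussian under (\ref{gmprocess}); since $r$ is defined to have the same covariance as $q$, its $\bx^T$-marginal equals the source law exactly, $r_{\bx^T}=q_{\bx^T}$, so that $D_{\text{KL}}(q_{\bx^T}\|r_{\bx^T})=0$. This is the one place where Gaussianity of the source is essential: it removes one of the two subtracted divergences, without which the inequality would not follow from the identity alone.

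To obtain the identity I would first express $I_r$ as an integral against $q$. Because $r$ is jointly Gaussian, $\log\frac{dr}{d(r_{\bx^T}\times r_{\bz^T})}$ is a (zero-mean) quadratic form in $(x^T,z^T)$, genuinely defined as a Radon--Nikodym derivative thanks to the nondegeneracy $P_0,W_t,V_t\succ0$. Since $q$ and $r$ are both zero-mean with identical second moments, the expectation of any such quadratic form is the same under $q$ and under $r$, giving $\int\log\frac{dr}{d(r_{\bx^T}\times r_{\bz^T})}\,dq=\int\log\frac{dr}{d(r_{\bx^T}\times r_{\bz^T})}\,dr=I_r(\bx^T;\bz^T)$. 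I would then compute $\int\log\frac{dq}{d(r_{\bx^T}\times r_{\bz^T})}\,dq$ by factoring through $q$'s own product of marginals; using $r_{\bx^T}=q_{\bx^T}$ this yields $I_q(\bx^T;\bz^T)+D_{\text{KL}}(q_{\bz^T}\|r_{\bz^T})$. Subtracting the two displayed integrals and applying the chain rule for Radon--Nikodym derivatives (Lemma~\ref{lemradon}(a)) collapses the difference to $\int\log\frac{dq}{dr}\,dq=D_{\text{KL}}(q\|r)$, producing the identity above; the required absolute continuities ($q\ll q_{\bx^T}\times q_{\bz^T}$ from $I_q<+\infty$, and $q_{\bz^T}\ll r_{\bz^T}$ from matched covariances) license each step.

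With $D_{\text{KL}}(q_{\bx^T}\|r_{\bx^T})=0$, the identity reduces to $I_q-I_r=D_{\text{KL}}(q\|r)-D_{\text{KL}}(q_{\bz^T}\|r_{\bz^T})$. The proof then closes by noting that $q_{\bz^T}$ and $r_{\bz^T}$ are the push-forwards of $q$ and $r$ under the same deterministic projection $(x^T,z^T)\mapsto z^T$; the data-processing inequality for relative entropy (equivalently, the chain-rule decomposition $D_{\text{KL}}(q\|r)=D_{\text{KL}}(q_{\bz^T|\bx^T}\|r_{\bz^T|\bx^T}\mid q_{\bx^T})\ge D_{\text{KL}}(q_{\bz^T}\|r_{\bz^T})$, which holds precisely because $r_{\bx^T}=q_{\bx^T}$) gives $D_{\text{KL}}(q\|r)\ge D_{\text{KL}}(q_{\bz^T}\|r_{\bz^T})$. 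Hence $I_q(\bx^T;\bz^T)-I_r(\bx^T;\bz^T)\ge0$, as claimed.

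I expect the only delicate point to be the moment-matching step $\int\log\frac{dr}{d(r_{\bx^T}\times r_{\bz^T})}\,dq=I_r$: one must verify that the Gaussian log-density-ratio is truly a degree-two polynomial with no linear part (guaranteed by the zero-mean normalization) and is $q$-integrable, and that every absolute-continuity relation needed to split the Radon--Nikodym derivative is in force. These are exactly the technical subtleties that the Radon--Nikodym formulation---rather than a density-based argument---is designed to handle, and they are controlled by the standing nondegeneracy $P_0,W_t,V_t\succ0$ together with the hypothesis $I_q(\bx^T;\bz^T)<+\infty$ (which also forces the relevant divergences to be finite, so the identity never reduces to an indeterminate $\infty-\infty$).
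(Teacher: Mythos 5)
Your overall strategy (moment matching plus the identity $I_q-I_r=D_{\text{KL}}(q\|r)-D_{\text{KL}}(q_{\bx^T}\|r_{\bx^T})-D_{\text{KL}}(q_{\bz^T}\|r_{\bz^T})$, then data processing) is the classical route and is close in spirit to the paper's argument, but as written it has a genuine gap: the absolute continuity $q_{\bz^T}\ll r_{\bz^T}$ that you claim follows ``from matched covariances'' is false in general, and neither $P_0,W_t\succ 0$ nor $I_q(\bx^T;\bz^T)<+\infty$ rescues it. The policies $\gamma\in\Gamma$ to which this proposition is applied include quantizer-like kernels producing a \emph{discrete} reproduction $\bz^T$; the simplest instance is a scalar source $\bx\sim\mathcal{N}(0,1)$ with $\bz=\mathrm{sign}(\bx)$. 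Then $I_q(\bx;\bz)=\log 2<+\infty$, but $q_{\bz}$ is a two-point measure while $r_{\bz}$ is a nondegenerate Gaussian, so $q_{\bz}\not\ll r_{\bz}$ and $D_{\text{KL}}(q_{\bz}\|r_{\bz})=+\infty$; likewise $q\not\ll r$ and $D_{\text{KL}}(q\|r)=+\infty$. In this entirely typical situation the integral $\int\log\frac{dq}{d(r_{\bx^T}\times r_{\bz^T})}\,dq$ in your second step is not even defined (the Radon--Nikodym derivative does not exist), your identity degenerates to $+\infty-\infty$, and the final subtraction cannot yield $I_q-I_r\ge 0$. Your closing remark that $I_q<+\infty$ ``forces the relevant divergences to be finite'' is exactly the false point; note also that $V_t\succ 0$ plays no role here, since there is no sensor noise in the (P-SRD) setting of this proposition, and the nondegeneracy assumptions control only the $\bx$-marginal, never the law of $\bz$.

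The fix is precisely to avoid ever forming the unconditional divergences $D_{\text{KL}}(q\|r)$ and $D_{\text{KL}}(q_{\bz^T}\|r_{\bz^T})$, which is what the paper's proof does: using Lemma~\ref{lemradon}(b) it writes $I_q-I_r$ as a single $q$-integral of
\begin{equation*}
\log\Bigl(\frac{dq(x^T|z^T)}{dq(x^T)}\cdot\frac{dr(x^T)}{dr(x^T|z^T)}\Bigr),
\end{equation*}
after using the moment-matching step (your first step, which is sound, since the Gaussian log-ratio is a quadratic form and $q,r$ share second moments with $\mathrm{supp}(q)\subseteq\mathrm{supp}(r)$) to replace $r$ by $q$ as the integrating measure; it then cancels $dr(x^T)=dq(x^T)$ (your correct observation about the source marginals) and arrives at $\int D_{\text{KL}}\bigl(q(dx^T|z^T)\,\|\,r(dx^T|z^T)\bigr)\,q(dz^T)\ge 0$. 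All objects there exist under the hypotheses: $I_q<+\infty$ together with nondegeneracy of $q(dx^T)$ gives $q(dx^T|z^T)\ll q(dx^T)$ for $q(dz^T)$-a.e.\ $z^T$, and the Gaussian conditional $r(dx^T|z^T)$ admits a nondegenerate density on $\mathrm{supp}(r(dz^T))\supseteq\mathrm{supp}(q(dz^T))$. Equivalently, you may keep your chain-rule picture $D_{\text{KL}}(q\|r)=D_{\text{KL}}(q_{\bz^T}\|r_{\bz^T})+D_{\text{KL}}(q_{\bx^T|\bz^T}\|r_{\bx^T|\bz^T}\,|\,q_{\bz^T})$ only as motivation for identifying $I_q-I_r$ with the conditional term directly, never by subtracting the (possibly infinite) marginal term.
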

\begin{proof}
This can be directly verified as
\ifdefined\FIGDOUBLECOL
\begin{align}
&I_q(\bx^T;\bz^T) - I_{r}(\bx^T;\bz^T) \nonumber \\
&=\int \log\frac{dq(x^T|z^T)}{dq(x^T)}q(dx^T,dz^T) \label{kernelproof1} \\
&  \;\;\;\;\;\; -\int \log\frac{dr(x^T|z^T)}{dr(x^T)}r(dx^T,dz^T) \label{kernelproof2}\\
&=\int \log\frac{dq(x^T|z^T)}{dq(x^T)}q(dx^T,dz^T) \nonumber \\
&  \;\;\;\;\;\; -\int \log\frac{dr(x^T|z^T)}{dr(x^T)}q(dx^T,dz^T) \label{kernelproof3}\\
&=\int \log \left( \frac{dq(x^T|z^T)}{dq(x^T)} \cdot \frac{dr(x^T)}{dr(x^T|z^T)} \right) q(dx^T,dz^T) \nonumber  \\
&=\int \log \left( \frac{dq(x^T|z^T)}{dr(x^T|z^T)} \right) q(dx^T,dz^T) \label{kernelproof4} \\
&=\int \left( \int \log \left( \frac{dq(x^T|z^T)}{dr(x^T|z^T)} \right) q(dx^T|z^T)  \right)  q(dz^T) \nonumber \\
&=\int D_{\text{KL}}\left(q(x^T|z^T) || r(x^T|z^T)\right)  q(dz^T) \geq 0.\nonumber
\end{align} 
\fi
\ifdefined\FIGSINGLECOL 
\begin{align}
I_q(\bx^T;\bz^T) - I_{r}(\bx^T;\bz^T) 
&=\int \log\frac{dq(x^T|z^T)}{dq(x^T)}q(dx^T,dz^T) \label{kernelproof1} \\
&  \;\;\;\;\;\; -\int \log\frac{dr(x^T|z^T)}{dr(x^T)}r(dx^T,dz^T) \label{kernelproof2}\\
&=\int \log\frac{dq(x^T|z^T)}{dq(x^T)}q(dx^T,dz^T) \nonumber \\
&  \;\;\;\;\;\; -\int \log\frac{dr(x^T|z^T)}{dr(x^T)}q(dx^T,dz^T) \label{kernelproof3}\\
&=\int \log \left( \frac{dq(x^T|z^T)}{dq(x^T)} \cdot \frac{dr(x^T)}{dr(x^T|z^T)} \right) q(dx^T,dz^T) \nonumber  \\
&=\int \log \left( \frac{dq(x^T|z^T)}{dr(x^T|z^T)} \right) q(dx^T,dz^T) \label{kernelproof4} \\
&=\int \left( \int \log \left( \frac{dq(x^T|z^T)}{dr(x^T|z^T)} \right) q(dx^T|z^T)  \right)  q(dz^T) \nonumber \\
&=\int D_{\text{KL}}\left(q(x^T|z^T) || r(x^T|z^T)\right)  q(dz^T) \geq 0.\nonumber
\end{align}  
\fi
(\ref{kernelproof1}) is by definition of mutual information and Lemma \ref{lemradon} (b). Since $q(dx^T)$ is a non-degenerate Gaussian probability measure, $I_q(\bx^T;\bz^T) <+\infty$ implies that $q(dx^T|z^T)$ admits a density $q(dz^T)-a.e.$.
This further requires that a Gaussian measure $r(dx^T|z^T)$ admits a density everywhere in $\text{supp}(r(dz^T))$, i.e., the support of the probability measure $r(dz^T)$.
Thus, the Radon-Nikodym derivative in (\ref{kernelproof2}) exists everywhere in $\text{supp}(r(dz^T))$.
Since $r$ is a Gaussian probability measure, $\log\frac{dr(x^T|z^T)}{dr(x^T)}$ is a quadratic function of $x^T$ and $z^T$ everywhere in $\text{supp}(r(dx^T, dz^T))$.
Since it can be shown that $\text{supp}(q(dx^T, dz^T))\subseteq \text{supp}(r(dx^T, dz^T))$, this allows us to replace $r(dx^T,dz^T)$ with $q(dx^T,dz^T)$ in  (\ref{kernelproof3}) since they have the same second order moments.
Lemma \ref{lemradon1} (a) is applicable in  (\ref{kernelproof4}) since $r(dx^T)=q(dx^T)$.
\end{proof}
Finally,  
\begin{align}
\sum\nolimits_{t=1}^T I_q(\bx^t;\bz_t|\bz^{t-1}) &= I_q(\bx^T;\bz^T) \label{ipiqir1}\\
&\geq I_{r}(\bx^T;\bz^T) \label{ipiqir2}\\
&= \sum\nolimits_{t=1}^T I_{r}(\bx^T;\bz_t|\bz^{t-1}) \nonumber \\
&\geq \sum\nolimits_{t=1}^T I_{r}(\bx_t;\bz_t|\bz^{t-1}) \nonumber \\
&= \sum\nolimits_{t=1}^T I_{s}(\bx_t;\bz_t|\bz^{t-1}) \label{ipiqir3}
\end{align}
See Remark~\ref{remark_directedinfo} for the equality (\ref{ipiqir1}). The result of Proposition~\ref{claim3} was used in (\ref{ipiqir2}). Equality (\ref{ipiqir3}) follows from Proposition~\ref{claim2}.
Thus, using $\gamma=\otimes_{t=1}^T q(dz_t|x^t, z^{t-1})\in \Gamma$ attaining cost $f_{\text{SRD}}$ in (P-SRD), we have constructed $\gamma_1=\otimes_{t=1}^T s(dz_t|x_t, z^{t-1})\in \Gamma_1$ incurring smaller cost in (P-1) than $f_{\text{SRD}}$.

(ii):
Let $\gamma_1=\otimes_{t=1}^T q(dz_t|x_t,z^{t-1})\in \Gamma_1$ be a sequence of linear-Gaussian stochastic kernels attaining $f_1<+\infty$ in (P-1), and $q(dx^T,dz^T)$ be the resulting joint probability measure.
Since $\bz_t$ -- $(\bx_t,\bz^{t-1})$ -- $\bx^{t-1}$ forms a Markov chain in  $q(dx^T,dz^T)$, we have
\ifdefined\FIGDOUBLECOL
\begin{align}
I(\bx^t;\bz_t|\bz^{t-1})&=I(\bx_t;\bz_t|\bz^{t-1})+I(\bx^{t-1};\bz_t|\bx_t,\bz^{t-1}) \nonumber \\
&= I(\bx_t;\bz_t|\bz^{t-1}). \label{eqixt}
\end{align}
\fi
\ifdefined\FIGSINGLECOL  
\begin{equation}
I(\bx^t;\bz_t|\bz^{t-1})=I(\bx_t;\bz_t|\bz^{t-1})+I(\bx^{t-1};\bz_t|\bx_t,\bz^{t-1}) = I(\bx_t;\bz_t|\bz^{t-1}). \label{eqixt}
\end{equation}
\fi
Hence the mutual information terms in (P-1) can be replaced with the ones in (P-SRD) without increasing cost.

\section{Proof of Lemma \ref{lemstep2}}
\label{secprooflemstep2}
(i): Suppose 
\begin{equation}
\label{lemprEF} \bz_t\!=\!E_t \bx_t \!+\! F_{t,t-1} \bz_{t-1} \!+\! \cdots \!+\! F_{t,1} \bz_1 \!+\! \bg_t, t\!=\!1,\cdots\!, T 
\end{equation}
is a linear-Gaussian stochastic kernel that attains $f_1<+\infty$ in (P-1). It is sufficient for us to show that there exist  nonnegative integers $r_1,\cdots, r_T$ and matrices $C_t\in \mathbb{R}^{r_t\times n_t}, V_t\in \mathbb{S}_{++}^{r_t}, t=1,\cdots, T$ such that $\{C_t, V_t\}_{t=1}^T$ attains a smaller cost than $f_1$ in (P-LGS). Let
\[ \left[\begin{array}{cc}U_1 & U_2\end{array}\right] 
\left[\begin{array}{cc}\Sigma_1 & 0 \\ 0 & 0\end{array}\right] 
\left[\begin{array}{c} U_1^\top \\ U_2^\top \end{array}\right] 
=\mathbb{E}\bg_t\bg_t^\top
\]
with an orthonormal matrix $U=\left[\begin{array}{cc}U_1 & U_2\end{array}\right] $ be a singular value decomposition of the covariance matrix of $\bg_t$.  If $\bg_t$ is nondegenerate, we understand that $U=U_1$, while if $\bg_t$ is a point mass at zero, then $U=U_2$.
Clearly $\tilde{\bg}_t=U_1^\top \bg_t$ is a zero-mean, nondegenerate Gaussian random vector and $U_2^\top \bg_t=0$. Define
\[
\left[\!\!\begin{array}{c} \tilde{\bz}_t \\ \hat{\bz}_t \end{array}\!\!\right]\!=\!
\left[\!\!\begin{array}{c} U_1^\top \\ U_2^\top \end{array}\!\!\right]\bz_t, 
\left[\!\!\begin{array}{c} \tilde{E}_t \\ \hat{E}_t \end{array}\!\!\right]\!=\!
\left[\!\!\begin{array}{c} U_1^\top \\ U_2^\top \end{array}\!\!\right]E_t, 
\left[\!\!\begin{array}{c} \tilde{F}_{t,s} \\ \hat{F}_{t,s} \end{array}\!\!\right]\!=\!
\left[\!\!\begin{array}{c} U_1^\top \\ U_2^\top \end{array}\!\!\right]F_{t,s}
\]
for $s=1,\cdots, t-1$. Then multiplying (\ref{lemprEF}) by $U^\top$ from the left yields
\begin{equation}
\label{eqztildehat}
\left[\!\!\!\begin{array}{c} \tilde{\bz}_t \\ \hat{\bz}_t \end{array}\!\!\!\right]\!=\!
\left[\!\!\!\begin{array}{c} \tilde{E}_t \\ \hat{E}_t \end{array}\!\!\!\right]\bx_t\!+\!
\left[\!\!\!\begin{array}{c} \tilde{F}_{t,t-1} \\ \hat{F}_{t,t-1} \end{array}\!\!\!\right]\bz_{t-1}\!+\!\cdots\!+\!
\left[\!\!\!\begin{array}{c} \tilde{F}_{t,1} \\ \hat{F}_{t,1} \end{array}\!\!\!\right]\bz_1\!+\!
\left[\!\!\!\begin{array}{c} \tilde{\bg}_t \\ 0\end{array}\!\!\!\right].
\end{equation}
\begin{proposition}
\label{claimehat}
$\hat{E}_t=0 \; \forall t=1,\cdots, T$ is necessary for $f_1 < +\infty$.
\end{proposition}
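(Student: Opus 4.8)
The plan is to prove the contrapositive: if $\hat{E}_t \neq 0$ for some $t$, then the corresponding term $I(\bx_t;\bz_t|\bz^{t-1})$ in the objective of (P-1) is infinite, so that $f_1 = +\infty$. The crucial structural observation is that, reading off the lower block of (\ref{eqztildehat}), the component $\hat{\bz}_t = U_2^\top \bz_t$ obeys $\hat{\bz}_t = \hat{E}_t \bx_t + \hat{F}_{t,t-1}\bz_{t-1} + \cdots + \hat{F}_{t,1}\bz_1$ with \emph{no} additive noise, since $\tilde{\bg}_t$ enters only the $\tilde{\bz}_t$ block. Thus, conditioned on $\bz^{t-1}$, the vector $\hat{\bz}_t$ is a deterministic affine function of $\bx_t$, and if $\hat{E}_t \neq 0$ this noiseless readout extracts genuine information about $\bx_t$, which I will argue forces infinite mutual information.

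First I would reduce to a scalar in order to sidestep the degeneracy of differential entropy. Since $U$ is orthonormal, $I(\bx_t;\bz_t|\bz^{t-1}) = I(\bx_t;\tilde{\bz}_t,\hat{\bz}_t|\bz^{t-1}) \geq I(\bx_t;\hat{\bz}_t|\bz^{t-1})$ by the chain rule and nonnegativity of conditional mutual information. Assuming $\hat{E}_t \neq 0$, I pick a nonzero row $\hat{E}_{t,i}$; the $i$-th coordinate $\hat{z}_{t,i}$ of $\hat{\bz}_t$ is a function of $\hat{\bz}_t$, so the data-processing inequality (applied under the conditioning on $\bz^{t-1}$) together with the fact that adding a function of the conditioning variable leaves conditional mutual information unchanged gives $I(\bx_t;\hat{\bz}_t|\bz^{t-1}) \geq I(\bx_t;\hat{z}_{t,i}|\bz^{t-1}) = I(\bx_t;\hat{E}_{t,i}\bx_t|\bz^{t-1})$.

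Second I would establish that the scalar $s \triangleq \hat{E}_{t,i}\bx_t$ is nondegenerate given $\bz^{t-1}$. Writing $\bx_t = A_{t-1}\bx_{t-1} + \bw_{t-1}$ from (\ref{gmprocess}) and observing that in $\Gamma_1$ the history $\bz^{t-1}$ is a function of $\bx^{t-1}$ and $\bg^{t-1}$, all of which are independent of the fresh noise $\bw_{t-1}$, the conditional covariance satisfies $\mathrm{Cov}(\bx_t \mid \bz^{t-1}) = A_{t-1}\mathrm{Cov}(\bx_{t-1}\mid\bz^{t-1})A_{t-1}^\top + W_{t-1} \succeq W_{t-1} \succ 0$ (for $t=1$ one uses $\mathrm{Cov}(\bx_1) = A_0 P_0 A_0^\top + W_0 \succeq W_0 \succ 0$ with no conditioning). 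Hence $\mathrm{Var}(s\mid\bz^{t-1}) = \hat{E}_{t,i}\,\mathrm{Cov}(\bx_t\mid\bz^{t-1})\,\hat{E}_{t,i}^\top > 0$ because $\hat{E}_{t,i} \neq 0$.

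Finally, decomposing $I(\bx_t;s|\bz^{t-1}) = h(s|\bz^{t-1}) - h(s|\bx_t,\bz^{t-1})$, the first term is a finite real number (the entropy of a nondegenerate scalar Gaussian), while the second is $-\infty$ because $s$ is a deterministic function of $\bx_t$. Thus $I(\bx_t;s|\bz^{t-1}) = +\infty$, which forces $I(\bx_t;\bz_t|\bz^{t-1}) = +\infty$ and hence $f_1 = +\infty$, the desired contradiction. I expect the main obstacle to be the rigorous handling of this entropy decomposition, since the differential entropy of a point-mass conditional law is $-\infty$ and one must avoid a formal $\infty - \infty$. The scalar reduction of the second step is precisely what keeps $h(s|\bz^{t-1})$ a genuine finite quantity, making the conclusion $I = +\infty$ unambiguous; an equivalent route is to work on the $\mathrm{rank}(\hat{E}_t)$-dimensional subspace supporting $\hat{E}_t\bx_t$ and invoke finiteness of the restricted differential entropy there.
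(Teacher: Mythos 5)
Your proof is correct, and its first half coincides with the paper's own argument: both lower-bound $I(\bx_t;\bz_t|\bz^{t-1})$ by $I(\bx_t;\hat{\bz}_t|\bz^{t-1})$ via the chain rule, and both exploit the fact that the $\hat{\bz}_t$ block of (\ref{eqztildehat}) carries no noise to replace $\hat{\bz}_t$ by $\hat{E}_t\bx_t$ under the conditioning. Where you genuinely diverge is in how that conditional mutual information is shown to be infinite. The paper \emph{de-conditions}: it writes $I(\bx_t;\hat{E}_t\bx_t|\bz^{t-1})=I(\bx_t;\hat{E}_t\bx_t,\bz^{t-1})-I(\bx_t;\bz^{t-1})\ge I(\bx_t;\hat{E}_t\bx_t)-I(\bx_t;\bz^{t-1})$, notes that the unconditional term is $+\infty$ whenever $\hat{E}_t\neq 0$ (since $\bx_t$ is a nondegenerate Gaussian), and asserts that $I(\bx_t;\bz^{t-1})$ is bounded. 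You instead stay conditioned, project onto a single nonzero row $\hat{E}_{t,i}$, prove $\mathrm{Var}(\hat{E}_{t,i}\bx_t\,|\,\bz^{t-1})>0$ from the fresh-noise bound $\mathrm{Cov}(\bx_t|\bz^{t-1})\succeq W_{t-1}\succ 0$, and finish with $I=h(s|\bz^{t-1})-h(s|\bx_t,\bz^{t-1})$, where the first term is finite and the second is $-\infty$. The trade-off is instructive: the paper's chain-rule step avoids conditional differential entropies altogether (no $\infty-\infty$ bookkeeping), but its claim that $I(\bx_t;\bz^{t-1})$ is bounded is stated without proof and rests on exactly the nondegeneracy you make explicit (one can verify it via $h(\bx_t|\bz^{t-1})\ge h(\bx_t|\bx_{t-1},\bz^{t-1})=h(\bw_{t-1})>-\infty$), whereas your route surfaces that dependence on $W_{t-1}\succ 0$ at the cost of the delicacy you yourself flag. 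For full rigor under the paper's measure-theoretic definition of mutual information, the cleanest way to close your last step is to observe that, given $\bz^{t-1}$, the joint law of $(\bx_t,s)$ is supported on the graph of a linear map while the product of the conditional marginals is a nondegenerate Gaussian on a space of one higher dimension; the joint law is therefore singular with respect to the product measure and the KL divergence is $+\infty$, which is precisely your conclusion without any entropy arithmetic.
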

\begin{proof}
Focus on the mutual information terms in (P-1).
\ifdefined\FIGDOUBLECOL
\begin{align*}
&I(\bx_t;\bz_t|\bz^{t-1}) \\
&=I(\bx_t;\tilde{\bz}_t, \hat{\bz}_t |\bz^{t-1}) \\
&\geq I(\bx_t;\hat{\bz}_t |\bz^{t-1}) \\
&=I(\bx_t;\hat{E}_t\bx_t+\hat{F}_{t,t-1}\bz_{t-1}+\cdots+\hat{F}_{t,1}\bz_{1} |\bz^{t-1}) \\
&=I(\bx_t;\hat{E}_t\bx_t |\bz^{t-1}) \\
&=I(\bx_t;\hat{E}_t\bx_t, \bz^{t-1})-I(\bx_t;\bz^{t-1}) \\
&\geq I(\bx_t;\hat{E}_t\bx_t)-I(\bx_t;\bz^{t-1}) 
\end{align*}
\fi
\ifdefined\FIGSINGLECOL  
\begin{align*}
I(\bx_t;\bz_t|\bz^{t-1}) &=I(\bx_t;\tilde{\bz}_t, \hat{\bz}_t |\bz^{t-1}) \\
&\geq I(\bx_t;\hat{\bz}_t |\bz^{t-1}) \\
&=I(\bx_t;\hat{E}_t\bx_t+\hat{F}_{t,t-1}\bz_{t-1}+\cdots+\hat{F}_{t,1}\bz_{1} |\bz^{t-1}) \\
&=I(\bx_t;\hat{E}_t\bx_t |\bz^{t-1}) \\
&=I(\bx_t;\hat{E}_t\bx_t, \bz^{t-1})-I(\bx_t;\bz^{t-1}) \\
&\geq I(\bx_t;\hat{E}_t\bx_t)-I(\bx_t;\bz^{t-1}) 
\end{align*}
\fi
Recall that $\bx_t$ is defined by (\ref{gmprocess}) and is a nondegenerate Gaussian random vector. If $\hat{E}_t\bx_t$ is a non-zero linear function of $\bx_t$, then $I(\bx_t;\hat{E}_t\bx_t)=+\infty$, while $I(\bx_t;\bz^{t-1}) $ is bounded. Therefore, $\hat{E}_t=0$ is necessary for $I(\bx_t;\bz_t|\bz^{t-1})$ to be bounded.
\end{proof}
Proposition~\ref{claimehat}, together with (\ref{eqztildehat}), implies that $\hat{\bz}_t$ is a linear function of $\bz^{t-1}$. Hence, there exist some matrices $H_{t,1},\cdots, H_{t,t-1}$ such that the first row of (\ref{eqztildehat}) can be rewritten as
\begin{equation}
\label{eqzhat}
\tilde{\bz}_t=\tilde{E}_t \bx_t+H_{t,t-1}\tilde{\bz}_{t-1}+\cdots+H_{t,1}\tilde{\bz}_{1}+\tilde{\bg}_t.
\end{equation}
It is also easy to see that $\bz^t$ can be fully reconstructed if $\tilde{\bz}^t$ is given. In particular, this implies that the $\sigma$-algebras generated by $\bz^t$ and $\tilde{\bz}^t$  are the same.
\begin{equation}
\label{sigmaz}
\sigma(\bz^t)=\sigma(\tilde{\bz}^t).
\end{equation}
\begin{proposition}
\label{claimiztilde}
$I(\bx_t;\bz_t|\bz^{t-1})=I(\bx_t;\tilde{\bz}_t|\tilde{\bz}^{t-1})\; \forall t=1,\cdots,T$.
\end{proposition}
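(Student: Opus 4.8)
The plan is to establish the identity for a fixed $t$ by manipulating the conditional mutual information using only two structural facts already in hand. First, since $U=[U_1\ U_2]$ is orthonormal, the map $\bz_t\mapsto(\tilde{\bz}_t,\hat{\bz}_t)=(U_1^\top\bz_t,U_2^\top\bz_t)$ is a linear bijection, so $\bz_t$ and $(\tilde{\bz}_t,\hat{\bz}_t)$ carry identical information about $\bx_t$ (given anything). Second, Proposition~\ref{claimehat} forces $\hat{E}_t=0$, and since the noise vanishes in the $U_2$-direction, the second block of (\ref{eqztildehat}) reads $\hat{\bz}_t=\hat{F}_{t,t-1}\bz_{t-1}+\cdots+\hat{F}_{t,1}\bz_1$; that is, $\hat{\bz}_t$ is a deterministic (measurable) function of $\bz^{t-1}$.

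With these, I would compute
\begin{align*}
I(\bx_t;\bz_t|\bz^{t-1}) &= I(\bx_t;\tilde{\bz}_t,\hat{\bz}_t|\bz^{t-1}) \\
&= I(\bx_t;\hat{\bz}_t|\bz^{t-1}) + I(\bx_t;\tilde{\bz}_t|\hat{\bz}_t,\bz^{t-1}) \\
&= I(\bx_t;\tilde{\bz}_t|\bz^{t-1}).
\end{align*}
The first equality is invariance of conditional mutual information under the invertible transformation $U^\top$ applied to $\bz_t$; the second is the chain rule for mutual information; the third uses that $\hat{\bz}_t$ is $\sigma(\bz^{t-1})$-measurable, so $I(\bx_t;\hat{\bz}_t|\bz^{t-1})=0$ and $\sigma(\hat{\bz}_t,\bz^{t-1})=\sigma(\bz^{t-1})$.

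It then remains only to replace the conditioning variable. By (\ref{sigmaz}) applied at $t-1$ we have $\sigma(\bz^{t-1})=\sigma(\tilde{\bz}^{t-1})$, and since conditional mutual information depends on the conditioning variable only through the $\sigma$-algebra it generates, $I(\bx_t;\tilde{\bz}_t|\bz^{t-1})=I(\bx_t;\tilde{\bz}_t|\tilde{\bz}^{t-1})$, which gives the claim.

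The main obstacle I anticipate is making the informal statements ``mutual information is invariant under invertible maps,'' ``conditioning on a deterministic function contributes zero,'' and ``CMI depends only on the generated $\sigma$-algebra'' fully rigorous in the measure-theoretic setting of this paper, where probability densities need not exist. These follow from the definition of conditional mutual information as a conditional relative entropy of stochastic kernels; I would verify each by expressing the relevant quantity as a Kullback--Leibler divergence and tracking the associated Radon--Nikodym derivatives under the bijection $U^\top$ and under enlarging or reducing the conditioning by a measurable function, paralleling the kernel-level bookkeeping already used in Appendix~\ref{secprooflemstep1}.
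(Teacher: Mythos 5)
Your proof is correct, and its first two steps coincide with the paper's: both apply the orthogonal bijection $U^\top$ to replace $\bz_t$ by $(\tilde{\bz}_t,\hat{\bz}_t)$, then discard $\hat{\bz}_t$ because Proposition~\ref{claimehat} makes it a linear function of $\bz^{t-1}$. Where you diverge is the final conditioning swap. The paper never invokes (\ref{sigmaz}) in this proof (it states that identity just before, but uses it only later for the distortion inequality); instead it runs an explicit iteration: write $\bz^{t-1}=(\bz_{t-1},\bz^{t-2})$, split $\bz_{t-1}$ into $(\tilde{\bz}_{t-1},\hat{\bz}_{t-1})$ via $U^\top$, drop $\hat{\bz}_{t-1}$ because it is a linear function of $\bz^{t-2}$, and repeat down to index $1$, so that every single step uses only the two elementary operations already justified (invariance under an invertible linear map, and removal of a conditioning coordinate that is a deterministic function of the rest). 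Your route instead appeals once to $\sigma(\bz^{t-1})=\sigma(\tilde{\bz}^{t-1})$ together with the general principle that conditional mutual information depends on the conditioning variable only through the $\sigma$-algebra it generates. This is shorter and cleanly reuses (\ref{sigmaz}), but it places the burden on that general principle, which in the paper's density-free, kernel-level setting is exactly the kind of statement that needs its own verification; you correctly flag this and sketch how to do it via Radon--Nikodym bookkeeping. The paper's iterative argument can be read as an inline proof of precisely that principle in the special case at hand, trading brevity for keeping every step at the level of already-established facts. Either version is acceptable; if you adopt yours, include the lemma that $I(\bx;\by|\bw)=I(\bx;\by|\bw')$ whenever $\sigma(\bw)=\sigma(\bw')$, proved at the kernel level, rather than citing it as folklore.
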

\begin{proof} This can be directly verified as follows.
\begin{align}
I(\bx_t;\bz_t|\bz^{t-1})&=I(\bx_t;\tilde{\bz}_t, \hat{\bz}_t|\bz^{t-1}) \nonumber \\
&= I(\bx_t;\tilde{\bz}_t|\bz^{t-1}) \label{lemprclaim1}\\
&= I(\bx_t;\tilde{\bz}_t|\tilde{\bz}_{t-1},\hat{\bz}_{t-1},\bz^{t-2}) \nonumber \\
&= I(\bx_t;\tilde{\bz}_t|\tilde{\bz}_{t-1},\bz^{t-2}) \label{lemprclaim2} \\
&= I(\bx_t;\tilde{\bz}_t|\tilde{\bz}_{t-1},\tilde{\bz}_{t-2},\bz^{t-3}) \nonumber \\
& \hspace{5ex} \vdots \nonumber \\
&= I(\bx_t;\tilde{\bz}_t|\tilde{\bz}^{t-1}) \nonumber
\end{align}
Equality (\ref{lemprclaim1}) holds since $\hat{\bz}_t$ is a linear function of $\bz^{t-1}$. Similarly, (\ref{lemprclaim2}) holds because $\hat{\bz}_{t-1}$ is a linear function of $\bz^{t-2}$. The remaining equalities can be shown by repeating the same argument.
\end{proof}
Now, for every $t=1,\cdots, T$, set $C_t=\tilde{E}_t$ and $\bv_t=\tilde{\bg}_t$. Then, by construction, $\bv_t$ is a zero-mean, nondegenerate Gaussian random vector that is independent of $\bx_0, \bw^t, \bv^{t-1}$. Hence $\by_t=C_t\bx_t+\bv_t$ is an admissible sensor equation for (P-LGS).
\begin{proposition}
\label{claimiztildey}
$I(\bx_t;\by_t|\by^{t-1})=I(\bx_t;\tilde{\bz}_t|\tilde{\bz}^{t-1})\; \forall t=1,\cdots,T$.
\end{proposition}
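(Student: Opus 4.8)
The plan is to exploit the fact that, by the construction just made, $\by_t=C_t\bx_t+\bv_t=\tilde{E}_t\bx_t+\tilde{\bg}_t$ is precisely the innovation term appearing in the recursion (\ref{eqzhat}). Subtracting, comparison with (\ref{eqzhat}) gives the affine relation
\begin{equation*}
\tilde{\bz}_t=\by_t+H_{t,t-1}\tilde{\bz}_{t-1}+\cdots+H_{t,1}\tilde{\bz}_1,
\end{equation*}
so that, once we condition on the past $\tilde{\bz}^{t-1}$, the vectors $\tilde{\bz}_t$ and $\by_t$ differ only by a deterministic (linear) function of the conditioning variable.

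First I would establish that $\by^t$ and $\tilde{\bz}^t$ generate the same information, i.e. $\sigma(\by^t)=\sigma(\tilde{\bz}^t)$, in exact analogy with (\ref{sigmaz}). Unwinding the triangular recursion above ($\tilde{\bz}_1=\by_1$, $\tilde{\bz}_2=\by_2+H_{2,1}\by_1$, and so forth) exhibits $\tilde{\bz}^t$ as an invertible, block lower-triangular, unit-diagonal linear image of $\by^t$, and conversely. In particular $\sigma(\tilde{\bz}^{t-1})=\sigma(\by^{t-1})$, so conditioning on $\tilde{\bz}^{t-1}$ is the same as conditioning on $\by^{t-1}$.

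The key step is then the invariance of conditional mutual information under a transformation of one argument that is allowed to depend on the conditioning variable: if $\bw=\phi(\bv,\bu)$ where $v\mapsto\phi(v,u)$ is a bi-measurable bijection for each fixed $u$, then $I(\bx;\bw|\bu)=I(\bx;\bv|\bu)$, since conditioned on $\bu=u$ the two arguments are related by a bijection, which preserves mutual information, and one then integrates over $\bu$. Applying this with $\bv=\by_t$, $\bu=\tilde{\bz}^{t-1}$, and the translation $\phi(\by_t,\tilde{\bz}^{t-1})=\by_t+H_{t,t-1}\tilde{\bz}_{t-1}+\cdots+H_{t,1}\tilde{\bz}_1=\tilde{\bz}_t$ yields $I(\bx_t;\tilde{\bz}_t|\tilde{\bz}^{t-1})=I(\bx_t;\by_t|\tilde{\bz}^{t-1})$. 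Combining this with $\sigma(\tilde{\bz}^{t-1})=\sigma(\by^{t-1})$ gives the claimed identity $I(\bx_t;\by_t|\by^{t-1})=I(\bx_t;\tilde{\bz}_t|\tilde{\bz}^{t-1})$.

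The main obstacle is making the invariance statement rigorous within the measure-theoretic (Radon--Nikodym) framework adopted here, the more so because $\tilde{\bg}_t$ is only guaranteed to be nondegenerate, so the relevant conditional laws do admit densities on their supports. I expect this to reduce, as in the proof of Proposition~\ref{claim3}, to verifying that the conditional Radon--Nikodym derivatives entering the divergence are unchanged under an invertible affine change of variables, which is routine.
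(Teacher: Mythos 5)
Your proposal is correct and follows essentially the same route as the paper's proof: the paper likewise concatenates the recursion to get the invertible block lower-triangular relation $\mathcal{H}_t\tilde{\bz}^t=\by^t$, concludes that conditioning on $\by^{t-1}$ and on $\tilde{\bz}^{t-1}$ are interchangeable, and then removes the term $H_{t,t-1}\tilde{\bz}_{t-1}+\cdots+H_{t,1}\tilde{\bz}_1$ inside the conditional mutual information because it is a function of the conditioning variables. The only cosmetic difference is the order of the two steps (the paper first swaps the conditioning, then translates the argument), which changes nothing.
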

\begin{proof}
By concatenating (\ref{eqzhat}), it can be easily seen that an identity $\mathcal{H}_t \tilde{\bz}^t=\by^t$ holds for every $t=1,\cdots, T$, where $\mathcal{H}_t$ is an invertible matrix defined by
\[ \mathcal{H}_t = \left[ \begin{array}{cccc} I & 0 & \cdots & 0\\ 
-H_{2,1} & I & \ddots & \vdots  \\
\vdots & \ddots & \ddots & 0 \\
-H_{t,1} & \cdots & -H_{t,t-1} & I\end{array}\right]. \]
Hence,
\ifdefined\FIGDOUBLECOL
\begin{align*}
I(\bx_t;\by_t|\by^{t-1}) &= I(\bx_t; \by_t|\tilde{\bz}^{t-1}) \\
&=\! I(\bx_t;\tilde{\bz}_t\!-\!H_{t,t-1} \tilde{\bz}_{t-1} \!-\! \cdots \!-\! H_{t,1} \tilde{\bz}_1|\tilde{\bz}^{t-1}) \\
&=\! I(\bx_t;\tilde{\bz}_t|\tilde{\bz}^{t-1}).
\end{align*}
\fi
\ifdefined\FIGSINGLECOL  
\begin{align*}
I(\bx_t;\by_t|\by^{t-1}) &= I(\bx_t; \by_t|\tilde{\bz}^{t-1}) \\
&= I(\bx_t;\tilde{\bz}_t-H_{t,t-1} \tilde{\bz}_{t-1} - \cdots - H_{t,1} \tilde{\bz}_1|\tilde{\bz}^{t-1}) \\
&= I(\bx_t;\tilde{\bz}_t|\tilde{\bz}^{t-1}).
\end{align*}
\fi
\end{proof}
Thus, starting from a sequence of linear-Gaussian stochastic kernels (\ref{lemprEF}), we have constructed a sequence of sensor equations of the form $\by_t=C_t\bx_t+\bv_t$ such that $I(\bx_t;\by_t|\by^{t-1})=I(\bx_t;\bz_t|\bz^{t-1})$. The last equality is a consequence of Propositions~\ref{claimiztilde} and \ref{claimiztildey}. To complete the proof of the first statement of Lemma \ref{lemstep2}, it is left to show that 
\begin{equation}
\label{lmsee}
\mathbb{E}\|\bx_t-\bz_t'\|_{\Theta_t}^2 \leq \mathbb{E}\|\bx_t-\bz_t\|_{\Theta_t}^2 \; \forall t=1,\cdots, T
\end{equation}
where $\bz_t'=\mathbb{E}(\bx_t|\by^t)$. (Here, we refer to the variable ``$\bz_t$'' in  (P-LGS) as $\bz_t'$ in order to distinguish it from the variable $\bz_t$ in (P-1).)
The inequality (\ref{lmsee}) can be verified by the following observation. Since $\mathcal{H}_t\tilde{\bz}^t=\by^t$, we have $\sigma(\by^t)=\sigma(\tilde{\bz}^t)$. Moreover, it follows from (\ref{sigmaz}) that $\sigma(\by^t)=\sigma(\bz^t)$. Thus, $\bz_t$ is $\sigma(\by^t)$-measurable. However, since $\bz_t'=\mathbb{E}(\bx_t|\by^t)$, $\bz_t'$ minimizes the mean square estimation error among all $\sigma(\by^t)$-measurable functions. Thus (\ref{lmsee}) must hold.

(ii):
Let $\{C_t, V_t\}_{t=1}^T$ be a sequence of matrices that attains $f_{\text{LGS}}<+\infty$ in (P-LGS). Let $\by_t$ be defined by (\ref{srdchannel}), and $\bz_t'=\mathbb{E}(\bx_t|\by^t)$ be the least mean square error estimate of $\bx_t$ given $\by^t$ obtained by the Kalman filter.
From the Kalman filtering formula, we have
\begin{align*}
\bz_t'&\!=\!A_{t\!-\!1}\bz'_{t\!-\!1}\!\!+\!P_{t|t\!-\!1}C_t^\top (C_tP_{t|t\!-\!1}C_t^\top \!\!+\!\!V_t)^{-1}(\by_t\!-\!C_tA_{t\!-\!1}\bz'_{t\!-\!1}) \\
&\!=\! E_t \bx_t + F_{t,t-1} \bz'_{t-1}+ \cdots + F_{t,1} \bz'_{1}+ \bg_t
\end{align*}
where $E_t, F_{t,t-1}, \cdots, F_{t,1}$ are some matrices (in fact, all $F_{t,t-2}, \cdots, F_{t,1}$ are zero matrices) and $\bg_t$ is a zero-mean Gaussian random vector that is independent of $\bx_0, \bw^t$ and $\bg^{t-1}$. Hence, by constructing a linear-Gaussian stochastic kernel for (P-1) by
\[\bz_t= E_t \bx_t + F_{t,t-1} \bz_{t-1}+ \cdots + F_{t,1} \bz_{1}+ \bg_t \]
using the same $E_t, F_{t,t-1}, \cdots, F_{t,1}$ and $\bg_t$, $(\bx^T, \bz^T)$ and $(\bx^T, \bz'^T)$ have the same joint distribution. 
Thus $\mathbb{E}\|\bx_t-\bz_t'\|_{\Theta_t}^2 = \mathbb{E}\|\bx_t-\bz_t\|_{\Theta_t}^2 \; \forall t=1,\cdots, T$.
Hence, it remains to prove that
\[ I(\bx_t;\by_t|\by^{t-1}) \geq I(\bx_t;\bz_t|\bz^{t-1}) \;\; \forall t=1,\cdots, T. \]
Notice that $I(\bx_t;\by_t|\bz^{t-1}) \geq I(\bx_t;\bz_t|\bz^{t-1})$  is immediate from the data-processing inequality. Moreover, an equality $I(\bx_t;\by_t|\by^{t-1}) = I(\bx_t;\by_t|\bz^{t-1})$ holds since the input sequence $\by^{t-1}$ and the output sequence $\bz^{t-1}$ of the Kalman filter contain statistically equivalent information.
Formally, this can be shown by proving that the Kalman filter is causally invertible \cite{kailath1968innovations}, and thus one can construct $\by^{t-1}$ from $\bz^{t-1}$ and \emph{vice versa}.

\ifdefined\LONGVERSION
\section{Semidefinite representation of SRD problems}
\label{secsdr}

We begin with the observation that the set
$
K^{2^l}=\left\{(x_1,\cdots, x_{2^l}, y)\in \mathbb{R}_+^{2^l+1}: y \leq \left( x_1\cdots x_{2^l} \right)^{\frac{1}{2^l}} \right\}
$
is semidefinite representable. To see this, introduce $1+2+2^2+\cdots+2^{l-1}=2^l-1$ dimensional vector
$ u= (u_{1,1},\cdots, u_{1,2^{l-1}}, u_{2,1},\cdots, u_{1,2^{l-2}}, \cdots, u_{l,1})$
and consider the following multiple layers of constraints:
\begin{itemize}
\item Layer $1$:
$ \left[\begin{array}{cc}
x_{2i-1} & u_{1,i} \\ u_{1,i} & x_{2i}
\end{array}\right] \succeq 0 \;\; \forall i=1,\cdots, 2^{l-1},
 $
\item Layer $j$ ($2\leq j \leq l$):
$
\left[\begin{array}{cc}
u_{j-1, 2i-1} & u_{j,i} \\ u_{j,i} & u_{j-1, 2i}
\end{array}\right] \succeq 0 \;\; \forall i=1,\cdots, 2^{l-j},
$
\item Layer $l+1$:
$ u_{l,1}-t \geq 0, \;\; t \geq 0.$
\end{itemize}
The above collection of constraints is simply denoted by  $\mathcal{S}_l(x;y;u)\succeq 0$, where $\mathcal{S}_l(x;y;u)$ is a $2^l+2$ dimensional block diagonal symmetric matrix such that each of its diagonal block corresponds to one of the above LMIs, and hence $\mathcal{S}_l(\cdot)$ is linear with respect to its arguments. It is not difficult to prove the following fact, which can be found in Section 4.2 in \cite{ben2001}.
\begin{lemma}\label{lemKsdr} $K^{2^l}$ is semidefinite representable as
\[
K^{2^l}=\left\{ \begin{array}{l} (x_1,\cdots, x_{2^l}, y)\in \mathbb{R}_+^{2^l+1}: \\
\exists u \in \mathbb{R}^{2^l-1} \text{ s.t. } \mathcal{S}_l(x;y;u)\succeq 0 \end{array}
 \right\}. 
\]
\end{lemma}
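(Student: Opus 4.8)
The plan is to establish the two set inclusions separately, exploiting the elementary fact that each $2\times 2$ semidefinite block is a disguised geometric-mean constraint: for $a,b\ge 0$ the condition that the symmetric matrix with diagonal $(a,b)$ and off-diagonal entry $c$ be positive semidefinite is equivalent to $a\ge 0$, $b\ge 0$ and $c^2\le ab$, i.e. $|c|\le\sqrt{ab}$. Hence the layered LMIs $\mathcal{S}_l(x;y;u)\succeq 0$ encode a balanced binary tree whose $2^l$ leaves are $x_1,\dots,x_{2^l}$, whose internal node values $u_{j,i}$ are each squeezed below the geometric mean of their two children, and whose root value $u_{l,1}$ dominates $t=y$. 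The entire argument amounts to propagating this geometric-mean inequality up and down the tree, and the statement is the familiar tower-of-square-roots representation from Section~4.2 of \cite{ben2001}.

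For the inclusion $\supseteq$ (soundness), I would suppose $(x_1,\dots,x_{2^l},y)$ admits some $u$ with $\mathcal{S}_l(x;y;u)\succeq 0$ and prove by induction on the layer index $j$ that $0\le u_{j,i}\le\big(\prod_k x_k\big)^{1/2^j}$, where the product runs over the $2^j$ leaves $x_{(i-1)2^j+1},\dots,x_{i2^j}$ lying under node $(j,i)$. The base case $j=1$ is exactly the Layer~$1$ block, which gives $u_{1,i}^2\le x_{2i-1}x_{2i}$; nonnegativity of every interior variable comes for free, since each $u_{j,i}$ with $j<l$ reappears as a diagonal entry of a higher-layer PSD block. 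For the inductive step, the Layer~$j$ block gives $u_{j,i}^2\le u_{j-1,2i-1}\,u_{j-1,2i}$, and substituting the two inductive bounds yields $u_{j,i}\le\sqrt{G_1 G_2}$, where $G_1,G_2$ are the geometric means of the two $2^{j-1}$-element sub-blocks; since the square root of the product of two geometric means of equally sized sets is the geometric mean of their union, this is precisely the claimed bound at level $j$. At $j=l$ this gives $u_{l,1}\le(x_1\cdots x_{2^l})^{1/2^l}$, and Layer~$l+1$ then delivers $y=t\le u_{l,1}\le(x_1\cdots x_{2^l})^{1/2^l}$, so $(x,y)\in K^{2^l}$.

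For the reverse inclusion $\subseteq$ (completeness), I would take $(x,y)\in K^{2^l}$ and simply construct $u$ by setting each $u_{j,i}$ equal to the geometric mean $\big(\prod_k x_k\big)^{1/2^j}$ of the leaves under node $(j,i)$. With this choice every Layer~$j$ block holds with equality, since $u_{j,i}^2=u_{j-1,2i-1}u_{j-1,2i}$ by the multiplicativity of the geometric mean across a split into two halves, and all diagonal entries are nonnegative because the $x_k$ are. Finally $u_{l,1}=(x_1\cdots x_{2^l})^{1/2^l}\ge y=t\ge 0$, so Layer~$l+1$ is satisfied. I do not expect a genuine obstacle here: the only points requiring care are the index bookkeeping that matches each node to its block of $2^j$ consecutive leaves, the propagation of nonnegativity just noted, and the degenerate cases where some $x_k=0$ (forcing the corresponding geometric means, and hence $y$, to vanish) or where a $u_{j,i}$ could a priori be negative (ruled out, in the soundness direction, by the same diagonal-entry observation). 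Making the two-membership statement airtight is thus essentially a matter of writing the induction cleanly rather than overcoming any conceptual difficulty.
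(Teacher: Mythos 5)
Your proof is correct and is exactly the standard tower-of-square-roots argument from Section 4.2 of \cite{ben2001}, which is all the paper itself offers---it states the lemma without proof, deferring to that reference. Both directions are sound as you describe: the upward induction bounding each $u_{j,i}$ by the geometric mean of the $2^j$ leaves beneath it, with nonnegativity of interior nodes extracted from their appearance as diagonal entries one layer up (and of $u_{l,1}$ from the Layer $l+1$ constraint, so that $y \le u_{l,1} \le (x_1\cdots x_{2^l})^{1/2^l}$), and the explicit geometric-mean assignment $u_{j,i}=\bigl(\prod_k x_k\bigr)^{1/2^j}$ for the converse inclusion, which handles the degenerate cases $x_k=0$ automatically.
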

Lemma \ref{lemKsdr} immediately leads to a semidefinite representation of several convex rational functions. In particular, we will use the following facts.
\begin{itemize}
\item Let $m$ be a positive integer. Then the set
$
K^{m}=\left\{(x, y)\in \mathbb{R}_+^m\times \mathbb{R}_+ : y \leq \left( x_1\cdots x_m \right)^{\frac{1}{m}} \right\}
$
is semidefinite representable. To obtain a semidefinite representation, set $l=\lceil \log_2 m \rceil$ (i.e., $l$ is the smallest integer such that $2^l \geq m$) and define a linear map 
\[\mathcal{Q}_m(x;y;u)=\mathcal{S}_l(x_1,\cdots, x_m, \underbrace{y, \cdots, y}_{2^l-m \text{ copies } }; y; u). \]
Then it can be shown that 
\[K^{m}=\left\{\begin{array}{l} (x,y)\in \mathbb{R}_+^m\times \mathbb{R}_+: \\ 
\exists u \in \mathbb{R}^{2^l-1} \text{ s.t. } \mathcal{Q}_m(x;y;u)\succeq 0  \end{array} \right\}.
\]
\item Let $m$ be a positive integer. Then the set
$
L^{m}=\left\{(x, y)\in \mathbb{R}_+^m\times \mathbb{R}_+ : y \geq \left( x_1\cdots x_m \right)^{-1} \right\}
$
is semidefinite representable. Define $l=\lceil \log_2 (m+1) \rceil$ and a linear map
\[\mathcal{R}_m(x;y;u)=\mathcal{S}_l(x_1,\cdots, x_m, y, \underbrace{1, \cdots, 1}_{2^l-m-1 \text{ copies } }; 1; u). \]
Then it can be shown that 
\[
L^{m}=\left\{\begin{array}{l} (x,y)\in \mathbb{R}_+^m\times \mathbb{R}_+: \\
\exists u \in \mathbb{R}^{2^l-1} \text{ s.t. } \mathcal{R}_m(x;y;u)\succeq 0   \end{array}  \right\}.
\]
\end{itemize}

A function $f(X)=(\det X)^{\frac{1}{n}}$ defined over $n\times n$ positive definite matrices $X$ is concave over its domain. It is also possible to show that its hypograph $D^n=\{(X,y)\in\mathbb{S}_{++}^n\times \mathbb{R}_+ : y \leq (\det X)^{\frac{1}{n}}\}$ is semidefinite representable. To confirm, a semidefinite representation of $K^m$ obtained above is used. The next result can be found in Section 4.2 in \cite{ben2001}.
\begin{lemma}
 $D^n=\{(X,y)\in\mathbb{S}_{++}^n\times \mathbb{R}_+ : y \leq (\det X)^{\frac{1}{n}}\}$ is semidefinite representable as
\[D^n\!=\!\left\{ \!\begin{array}{l} (X,y)\in\mathbb{S}_{++}^n\times \mathbb{R}_+ : \\
  \exists \textnormal{ lower triangular matrix } \Delta \in \mathbb{R}^{n\times n} \textnormal{ and } u\in\mathbb{R}^{n_u} \\
  \textnormal{ such that } \mathcal{Q}_n (\textnormal{diag}(\Delta);y;u) \succeq 0  \\
 \textnormal{ and }
\left[\begin{array}{cc}
X & \Delta \\ \Delta^\top & \textnormal{Diag}(\Delta)
\end{array}\right] \succeq 0
 \end{array}  \!\! \right\}\]
where $\textnormal{diag}(\Delta)$ is a vector of diagonal elements of $\Delta$, $\textnormal{Diag}(\Delta)$ is a diagonal matrix whose diagonal elements are the same as those of $\Delta$, and $n_u=2^{\lceil \log_2 n \rceil}-1$.
\end{lemma}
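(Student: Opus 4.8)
The plan is to prove the two inclusions defining the claimed representation of $D^n$, using the Cholesky factorization to connect $\det X$ to products of diagonal entries and then invoking the already-established representation of the geometric-mean hypograph encoded by $\mathcal{Q}_n$.

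For the inclusion ``$\supseteq$'', suppose a lower triangular $\Delta$ and a vector $u$ satisfy both constraints on the right-hand side. From the $\mathcal{Q}_n$-constraint, together with the semidefinite representation of $K^n$ derived just before Lemma~\ref{lemKsdr}, I read off that the diagonal entries $\delta_{ii}$ of $\Delta$ are nonnegative and that $y \le (\prod_{i=1}^n \delta_{ii})^{1/n}$. Assuming for the moment that all $\delta_{ii}>0$, so that $\textnormal{Diag}(\Delta)\succ 0$, the Schur complement applied to the $2n\times 2n$ LMI yields $X \succeq \Delta\,\textnormal{Diag}(\Delta)^{-1}\Delta^\top \succeq 0$. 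The key computation is a determinant identity: since $\Delta$ is lower triangular, $\det\Delta = \prod_i \delta_{ii}$, and hence $\det\bigl(\Delta\,\textnormal{Diag}(\Delta)^{-1}\Delta^\top\bigr) = (\det\Delta)^2/\det\textnormal{Diag}(\Delta) = \prod_i \delta_{ii}$. By monotonicity of $\det$ on the positive-semidefinite cone under the Löwner order, $\det X \ge \prod_i \delta_{ii}$, so $(\det X)^{1/n} \ge (\prod_i \delta_{ii})^{1/n} \ge y$, which is exactly the statement $(X,y)\in D^n$.

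For the inclusion ``$\subseteq$'', given $(X,y)$ with $X\succ 0$ and $y \le (\det X)^{1/n}$, I construct an explicit witness. Let $X = LL^\top$ be the Cholesky factorization with $L$ lower triangular and $L_{ii}>0$, and set $\Delta = L\,\textnormal{Diag}(L)$. Then $\Delta$ is lower triangular with $\delta_{ii}=L_{ii}^2$ and $\textnormal{Diag}(\Delta)=\textnormal{Diag}(L)^2$, and a direct computation gives $\Delta\,\textnormal{Diag}(\Delta)^{-1}\Delta^\top = L\,\textnormal{Diag}(L)\,\textnormal{Diag}(L)^{-2}\,\textnormal{Diag}(L)\,L^\top = LL^\top = X$, so the Schur complement holds with equality and the LMI is satisfied. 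Moreover $\prod_i \delta_{ii} = \prod_i L_{ii}^2 = (\det L)^2 = \det X \ge y^n$, so by the representation of $K^n$ there exists $u$ with $\mathcal{Q}_n(\textnormal{diag}(\Delta);y;u)\succeq 0$. This exhibits the required pair $(\Delta,u)$.

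The main obstacle is the degenerate boundary case in which some $\delta_{ii}=0$, so that $\textnormal{Diag}(\Delta)$ is singular and the clean Schur-complement equivalence $X \succeq \Delta\,\textnormal{Diag}(\Delta)^{-1}\Delta^\top$ is not directly available. I would handle this either with the generalized Schur complement via the Moore--Penrose pseudoinverse together with the range condition forced by the LMI, or more simply by a perturbation argument: replace $\textnormal{Diag}(\Delta)$ by $\textnormal{Diag}(\Delta)+\epsilon I$, derive the bound on $\det X$, and let $\epsilon\downarrow 0$, noting that $\prod_i \delta_{ii}=0$ makes the bound $y\le 0$ trivial since $y\in\mathbb{R}_+$. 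The remaining ingredients---the determinant identity, monotonicity of $\det$ under the Löwner order, and the correctness of the Cholesky-based witness---are routine, and the genuinely hard step of representing the geometric-mean constraint has already been carried out in the construction of $\mathcal{Q}_n$ preceding Lemma~\ref{lemKsdr}.
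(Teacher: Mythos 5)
Your proof is correct. The paper does not actually prove this lemma itself---it defers to Section 4.2 of \cite{ben2001}---and your argument (the Cholesky-based witness $\Delta = L\,\textnormal{Diag}(L)$ with $\Delta\,\textnormal{Diag}(\Delta)^{-1}\Delta^\top = X$ for one inclusion; the Schur complement, the identity $\det\bigl(\Delta\,\textnormal{Diag}(\Delta)^{-1}\Delta^\top\bigr)=\prod_i\delta_{ii}$, and monotonicity of the determinant under the L\"owner order for the other, with the degenerate case $\delta_{ii}=0$ dispatched trivially because the $\mathcal{Q}_n$-constraint then forces $y=0$) is precisely the standard argument from that cited reference, so there is nothing substantive to compare.
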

Combining the results obtained so far, it is possible to derive a semidefinite representation of a set such as 
\[
\left\{\begin{array}{l}  (X_1,\cdots, X_T, y)\in \mathcal{S}_{++}^n \times \cdots \times \mathcal{S}_{++}^n \times \mathbb{R}_+ : \\ y \geq \prod_{i=1}^T (\det X_i)^{-\frac{1}{n}} \end{array} \right\}.
\]
 This observation is critical when rewriting a max-det problem (\ref{optprob3hard}) as a standard SDP. Notice that even if the objective function $-\sum_{t=1}^T \log \det \Pi_t $ in (\ref{optprob3hard}) is replaced with $\prod_{i=1}^T (\det X_i)^{-\frac{1}{n}}$, the optimal solution will be unchanged. Thus, the unique optimal solution to (\ref{optprob3hard}) can be also found by solving the following SDP:
\begin{align*}
\min & \;\; y \\
\text{s.t. } & \;\; \Pi_t \succ 0, P_{t|t} \preceq A_{t-1}P_{t-1|t-1}A_{t-1}^\top + W_{t-1} \\
& \;\; \text{Tr}(\Theta_t P_{t|t})\leq D_t,  \forall t=1,\cdots, T \\
& \;\; \left[\begin{array}{cc} P_{t|t}-\Pi_t & P_{t|t}A_t^\top \\ A_tP_{t|t} & A_tP_{t|t}A_t^\top + W_t   \end{array}\right] \succeq 0, \forall t=1,\cdots, T-1 \\
& \;\;  P_{T|T}=\Pi_T, \mathcal{R}_T (z;y;v)\succeq 0 \\
& \;\; \left[\begin{array}{cc}\Pi_t & \Delta_t \\ \Delta_t^\top & \text{Diag}(\Delta_t) \end{array}\right] \succeq 0 \\
& \;\; \mathcal{Q}_n(\text{diag}(\Delta_t);z_t;u_t)\succeq 0, \forall  t=1,\cdots, T.
\end{align*}
Notice that we have introduced additional variables $y\in\mathbb{R}$, $z \in \mathbb{R}^T, v\in \mathbb{R}^{n_v}$ with $n_v=2^{\lceil \log_2 (T+1) \rceil}-1$, $u_1, \cdots, u_T \in \mathbb{R}^{n_u}$ with $n_u=2^{\lceil \log_2 n \rceil}-1$, and lower triangular matrices $\Delta_1, \cdots, \Delta_T \in \mathbb{R}^{n \times n}$.
\fi

\ifdefined\LONGVERSION
\section{Complexity analysis}
\label{seccomplexity}

\subsection{Weighted determinant maximization problem}

Let $X_i \in \mathbb{S}_{++}^{N_i}, i=1,\cdots, n$ be decision variables, and write $X=\text{Diag}(X_1,\cdots, X_n)$. Let $\mathcal{A}$ be a linear operator, and $b \in \mathbb{R}^m$ be a given vector.
In what follows, $\mathcal{I}\subseteq \{1, \cdots, n\}$ is a fixed subset of indices. For every $i\in \mathcal{I}$, let $c_i >0$ be a given constant, while for every  $i\not\in \mathcal{I}$, define $c_i =0$. Symmetric matrix $C$ is also given, and we write $\left<C, X\right>=\text{Tr}(CX)$. For real vectors, $\left<\cdot, \cdot \right>$ means the standard dot product. 
The weighted determinant maximization problem is in the following form:
\begin{align*}
{\bf (P) }\;\; \min_{X_i\in\mathbb{S}^{N_i}} & \;\; \left<C, X\right>-\sum_{i\in \mathcal{I}} c_i \log\det X_i \\
\text{s.t. } & \;\; \mathcal{A}(X)=b,\;\;  X \succeq 0.
\end{align*}
It is an easy exercise to see that the sequential rate-distortion problem (\ref{optprob3}) can be rewritten in this form. The dual problem is
\begin{align*}
{\bf (D) }\;\; \max_{y\in \mathbb{R}^m, \; Z_i\in\mathbb{S}^{N_i}} & \;\; \left<y, b\right>+\sum_{i\in \mathcal{I}} c_i \log\det Z_i + \sum_{i\in \mathcal{I}} c_i N_i \\
\text{s.t. } & \;\; \mathcal{A}^*(y)+Z=C,\;\;  Z \succeq 0.
\end{align*}
where $Z=\text{Diag}(Z_1, \cdots, Z_n)$ and $\mathcal{A}^*$ is the adjoint of $\mathcal{A}$.
We assume that a primal-dual path-following algorithm proposed in  \cite{tsuchiya2007} is used to solve a primal-dual pair of optimization problems {\bf (P)} and {\bf (D)} simultaneously. As in the interior-point methods for SDP, log-determinant barrier functions parameterized by $\nu>0$ is introduced. In particular, we consider the following barrier problems for {\bf (P)} and {\bf (D)} respectively.

\begin{align*}
{\bf (P_\nu) }\;\; \min_{X_i\in\mathbb{S}^{N_i}} & \;\; \left<C, X\right>-\sum_{i=1}^n \max(c_i, \nu) \log\det X_i \\
\text{s.t. } & \;\; \mathcal{A}(X)=b,\;\;  X \succ 0, \\
{\bf (D_\nu) } \! \max_{y\in \mathbb{R}^m, \! Z_i\in\mathbb{S}^{N_i}} & \!\!\!\! \left<y, b\right>\!+\!\!\sum_{i=1}^n \max(c_i,\nu) \log\det Z_i \!+\! \sum_{i\in \mathcal{I}} c_i N_i \\
\text{s.t. } & \;\; \mathcal{A}^*(y)+Z=C,\;\;  Z \succ 0.
\end{align*}

Barrier problems ${\bf (P_\nu) }$ and ${\bf (D_\nu) }$ are designed so that their optimal solutions approaches to those of  ${\bf (P) }$ and ${\bf (D) }$ as $\nu\searrow 0$.
Suppose $X$ and $(y, Z)$ are feasible points for ${\bf (P_\nu)}$ and ${\bf (D_\nu)}$. Let $p_\nu(X)$ and $d_\nu(y,Z)$ be the values of the objective functions in ${\bf (P_\nu)}$ and ${\bf (D_\nu)}$, evaluated at $X$ and $(y,Z)$ respectively. Then, it is easy to show that the quantity $p_\nu(X)- d_\nu(y,Z) $ is minimized when 
\begin{align}
\mathcal{A}(X)=b, X \succ 0,  \mathcal{A}^*(y)+Z=C, Z\succ 0, & \nonumber \\
  X_iZ_i=\max(c_i, \nu)I, & \label{optcond}
\end{align}
and hence, (\ref{optcond}) is the optimality condition for $(X, y, Z)$ to be the primal-dual optimal solution to ${\bf (P_\nu) }$ and ${\bf (D_\nu) }$. The solution of  (\ref{optcond}) parameterized by $\nu$, denoted by $(X(\nu), y(\nu), Z(\nu))$, is called the \emph{primal-dual central path}. The basic idea of the primal-dual path-following algorithm is to trace the trajectory of $(X(\nu), y(\nu), Z(\nu))$ numerically as $\nu$ is reduced to zero.
Every time $\nu$ is updated, $(X, y, Z)$ is updated to $(X+\alpha \Delta X, y+ \alpha \Delta y, Z+ \alpha \Delta Z)$ where $\alpha$ is a step size, so that (\ref{optcond}) is approximately met at the new point. A standard choice of the update direction $(\Delta X, \Delta y, \Delta Z)$ is the scaled Newton direction.
The scaled Newton direction, aiming at approximating the root of nonlinear equations of the form $\mathcal{A}(X)=b, \mathcal{A}^*(y)+Z=C, X_iZ_i=w_i I$, is obtained by solving a linearized equations:
\begin{subequations}
\label{newtondirection}
\begin{align}
\mathcal{A}(\Delta X) &= b-\mathcal{A}(X) \\
\mathcal{A}^*(\Delta y)+\Delta Z&=C-\mathcal{A}^*(y)-Z \\
\mathcal{H}_{P_i}(X_i\Delta Z_i+ \Delta X_i Z_i)&=w_i I - \mathcal{H}_{P_i}(X_i Z_i)
\end{align}
\end{subequations}
where $P_i$ is a nonsingular scaling matrix, and $\mathcal{H}_{P_i}(M)=\frac{1}{2}(P_iMP_i^{-1}+P_i^{-\top}M^\top P^\top)$.
Following \cite{tsuchiya2007}, we assume the choice $P_i=(X_i^{1/2}(X_i^{1/2}Z_iX_i^{1/2}) X_i^{1/2})^{1/2}$ for every $i=1,\cdots, n$, which results in the so-called Nesterov-Todd direction $(\Delta X, \Delta y, \Delta Z)$.

\subsection{Description of the path-following algorithm}
By definition of the primal-dual central path, $\left<X(\nu), Z(\nu)\right>=\sum_{i=1}^n \max(c_i, \nu) N_i$ holds. 
Since the primal-dual optimal solution to ${\bf (P) }$ and ${\bf (D) }$ is approximated by $(X(\nu),Z(\nu)) \rightarrow (X^*,Z^*)$ as $\nu\searrow 0$, it must be that $\left<X^*, Z^*\right>=\sum_{i\in \mathcal{I}} c_i N_i$. 
The path-following algorithm proposed in \cite{tsuchiya2007} sequentially generates  $\{X^k, Z^k\}$ such that $\left<X^k, Z^k\right>\searrow \sum_{i\in \mathcal{I}} c_i N_i$. 

To describe the algorithm more precisely, several notations need to be introduced. Let $\mathcal{F}=\{(X,y,Z): \mathcal{A}(X)=b, X \succ 0, \mathcal{A}^*(y)+Z=C, Z\succ 0\}$ be the primal-dual feasible set. For $(X, y, Z)\in \mathcal{F}$ such that $\left<X, Z\right> > \sum_{i\in \mathcal{I}} c_i N_i$, clearly there exists $\nu> 0$ such that $\left<X, Z\right> = \sum_{i=1}^n \max(c_i, \nu) N_i$.
Using such $\nu$, define the index set $\mathcal{I}^*(X,Z)\subseteq \mathcal{I}$ by $\mathcal{I}^*(X,Z)=\{i : c_i \geq \nu \}$. The \emph{extended normalized duality gap} is the quantity defined by
\[ \mu(X,Z)=\frac{\left<X,Z\right> - \sum_{i\in\mathcal{I}^*(X,Z)} c_i N_i}{ \sum_{i\not\in\mathcal{I}^*(X,Z)}  N_i}. \]
The extended normalized duality gap satisfies the following properties.
\begin{itemize}
\item For $(X,Z)\in \mathcal{F}$ such that $\left<X, Z\right> > \sum_{i\in \mathcal{I}} c_i N_i$, we have $\mu(X,Z)>0$.
\item On the primal-dual central path, we have $\mu(X(\nu),Z(\nu))=\nu$.
\end{itemize}
The path-following algorithm generates a sequence in the neighborhood of the primal-dual central path such that $\mu(X^k, Z^k)\searrow 0$. The neighborhood of the central path is defined explicitly as follows using a constant $\gamma \in (0,1)$ specifying the size of the neighborhood:
\[
\mathcal{N}_\infty(\gamma) = \{(X, y, Z)\in\mathcal{F}: d_\infty(X,Z) \leq \gamma \mu(X,Z) \} \]
where 
\begin{align*} d_\infty(X,Z)= \max \{
& \max_{i\in\mathcal{I}^*(X,Z)} c_i-\lambda_{\text{min}}(X_iZ_i),  \\ & \max_{i\not\in\mathcal{I}^*(X,Z)} \mu(X,Z)-\lambda_{\text{min}}(X_iZ_i) \}
\end{align*}
 is a distance function. It can be shown that if $(X,y,Z)\in \mathcal{N}_\infty(\gamma)$ and $\mu(X,Z)$ is sufficiently small, the duality gap between the original pair of problems ${\bf (P) }$ and ${\bf (D) }$ is upper bounded by $N\mu(X,Z)$, where $N=\sum_{i=1}^n N_i$.
Therefore, if $\{X^k, y^k, Z^k\}$ is a sequence in $\mathcal{N}_\infty(\gamma) $ such that $\mu(X^k,Z^k)\searrow 0$, then any limit point of the sequence are optimal solutions to  ${\bf (P) }$ and ${\bf (D) }$.

\begin{algorithm}[t]
\small
\caption{Primal-dual path following algorithm}
\begin{algorithmic}[1]
\label{pathfollowingalg}
\REQUIRE Parameter $\gamma\in (0,1)$ and initial point $(X^0, y^0, Z^0)\in \mathcal{N}_\infty(\gamma)$ 
\STATE Set $\sigma\in (0,1)$ (rate of barrier update) and $\epsilon >0$ (accuracy of the final solution)
\REPEAT 
\STATE Set $(X,y,Z)=(X^k,y^k,Z^k)$ and compute scaling matrices $P_i, \; i=1,\cdots, n$.
\STATE For $i=1,\cdots,n$, set $w_i=c_i$ if $i\in \mathcal{I}^*(X,Z)$ and $w_i=\sigma\mu(X,Z)$ otherwise. 
\STATE Solve (\ref{newtondirection}) for the scaled Newton direction $(\Delta X, \Delta y, \Delta Z)$.
\STATE Let $\alpha^k$ be the largest $\alpha>0$ such that $(X+\alpha \Delta X, y+\alpha \Delta y, Z+\alpha \Delta Z)\in \mathcal{N}_\infty(\gamma)$.
\STATE Update $(X^{k+1}, y^{k+1}, Z^{k+1})=(X+\alpha^k \Delta X, y+\alpha^k \Delta y, Z+\alpha^k \Delta Z)\in \mathcal{N}_\infty(\gamma)$
\UNTIL{$\mu(X^k, Z^k)\leq \epsilon \mu(X^0, Z^0)$}
\end{algorithmic}
\end{algorithm}

Assuming that the initial point $(X^0, y^0, Z^0)\in \mathcal{N}_\infty(\gamma)$  is already given, the primal-dual path following algorithm proceeds as in Algorithm \ref{pathfollowingalg}. Although the barrier parameter $\nu$ does not explicitly show up, the sequence $(X^{k+1}, y^{k+1}, Z^{k+1})$ approximately follows the central path to the optimal solution. Choosing step sizes as in line 6 makes sense, since it can be shown that the extended normalized duality gap $\mu(X+\alpha \Delta X, Z+\alpha \Delta Z)$ is a monotonically non-increasing function of $\alpha$, and our purpose is to reduce this quantity to zero. Determining such $\alpha^k$ does not have to be precise, so this step is computationally cheap\footnote{What is required is that $\alpha^k$ is large enough in order to guarantee the extended duality gap reduces at some specific rate at every iteration. This turns out to be possible.}. The following result on the iteration complexity is due to  \cite{tsuchiya2007}.

\begin{proposition}
Assuming the Nesterov-Todd direction is used, Algorithm \ref{pathfollowingalg} terminates in $\mathcal{O}(N\log(1/\epsilon)+N)$ iterations.
\end{proposition}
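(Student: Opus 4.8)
The plan is to follow the wide-neighborhood primal--dual path-following analysis for self-concordant barriers, specialized by Tsuchiya to the weighted log-determinant barrier driving Algorithm~\ref{pathfollowingalg}. The natural starting point is that $-\log\det X_i$ is a self-concordant barrier for $\mathbb{S}_{++}^{N_i}$ with barrier parameter $N_i$, so the aggregate parameter for the product cone is exactly $N=\sum_{i=1}^n N_i$. Since the whole estimate will be expressed through this aggregate parameter, $N$ is the quantity that governs the bound, which already explains why it appears in the stated complexity.

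The core of the argument is a per-iteration decrease estimate for the extended normalized duality gap $\mu(X,Z)$. I would establish that, for iterates in $\mathcal{N}_\infty(\gamma)$, each accepted step satisfies
\[ \mu(X^{k+1},Z^{k+1}) \le \Big(1-\frac{\delta}{N}\Big)\,\mu(X^k,Z^k) \]
for some constant $\delta=\delta(\gamma,\sigma)>0$ depending only on the neighborhood width $\gamma$ and the centering parameter $\sigma$. Two ingredients are needed. First, the Nesterov--Todd choice $P_i=(X_i^{1/2}(X_i^{1/2}Z_iX_i^{1/2})X_i^{1/2})^{1/2}$ symmetrizes the complementarity residual in the target condition (\ref{optcond}), so the linearized system (\ref{newtondirection}) is solvable and the scaled increments commute, which renders the second-order coupling term controllable. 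Second, one must lower-bound the admissible step $\alpha^k$ of line~6 of Algorithm~\ref{pathfollowingalg} by a quantity of order $1/N$, using that along the Nesterov--Todd direction the quadratic change in the eigenvalues of $X_iZ_i$ stays dominated by $\gamma\mu$ throughout the wide neighborhood $\mathcal{N}_\infty(\gamma)$.

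Granting this geometric decrease, the count is routine: reducing $\mu$ from $\mu(X^0,Z^0)$ to $\epsilon\,\mu(X^0,Z^0)$ needs $k\ge \delta^{-1}N\log(1/\epsilon)$ steps, giving the $\mathcal{O}(N\log(1/\epsilon))$ term, and the bound on $d_\infty$ inside $\mathcal{N}_\infty(\gamma)$ then certifies a true duality gap at most $N\mu(X^k,Z^k)$ so that the stopping test indeed yields an $\epsilon$-accurate solution. The additive $\mathcal{O}(N)$ term arises from the adaptive weighting $w_i=\max(c_i,\nu)$: as the effective parameter $\nu$ decreases past the distinct values $\{c_i\}_{i\in\mathcal{I}}$, the active set $\mathcal{I}^*(X^k,Z^k)$ can change at most $|\mathcal{I}|\le n\le N$ times, and each such transition, where a block passes from the uniformly-weighted to the rigidly-weighted regime, is absorbed by a bounded number of recentering steps; summing these $\mathcal{O}(1)$ corrections over at most $N$ transitions contributes the extra $\mathcal{O}(N)$.

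The hard part will be the step-size lower bound $\alpha^k=\Omega(1/N)$ together with the verification that the updated point remains in $\mathcal{N}_\infty(\gamma)$. This is exactly where the heterogeneous weights $w_i$ and the adaptively defined active set $\mathcal{I}^*$ interact with the Nesterov--Todd scaling, and where the analysis genuinely departs from the equally-weighted log-det case; I would therefore invoke Tsuchiya's neighborhood estimates for these detailed inequalities rather than reprove them from scratch, and assemble the iteration count as above.
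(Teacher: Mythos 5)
Your proposal is consistent with the paper, which in fact offers no proof of this proposition at all---it simply attributes the iteration bound to \cite{tsuchiya2007}---and your sketch likewise defers the hard step-size and neighborhood estimates to that same reference. The outline you give (barrier parameter $N$ for the product cone, per-iteration reduction of the extended normalized duality gap by a factor $1-\delta/N$ inside $\mathcal{N}_\infty(\gamma)$ under Nesterov--Todd scaling, hence $\mathcal{O}(N\log(1/\epsilon))$ steps, plus an additive $\mathcal{O}(N)$ absorbed by the transitions of the active set $\mathcal{I}^*$) is the standard wide-neighborhood route underlying Tsuchiya's result, so your write-up is essentially the same approach, merely supplying more of the argument's structure than the paper itself does.
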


The most computationally expensive step in Algorithm \ref{pathfollowingalg} is the computation of the Newton direction (line 5). The above proposition guarantees that the number of this operation is no more than $\mathcal{O}(N\log(1/\epsilon)+N)$ throughout the path following.

\subsection{Initialization}
Algorithm  \ref{pathfollowingalg} requires an initial point $(X^0,y^0,Z^0)$ in the neighborhood of the central path. 
For instance, let $\nu^0=1$ be the initial barrier parameter, and suppose that the initialization phase is required to give a good approximation of $(X^0,y^0,Z^0)=(X(\nu^0),y(\nu^0),Z(\nu^0))$, so that the path following algorithm can start from this point. Then, the path following phase considered in the previous subsection  involves at most $\mathcal{O}(N\log(1/\epsilon)+N)$ Newton iterations\footnote{Recall that this is the number required to reduce extended normalized duality gap $\mu$ from $1$ to $\epsilon$, and small $\mu$ implies small duality gap for the ${\bf (P) }$-${\bf (D) }$ pair.} until it reaches to an approximated solution $(X, y, Z)$ attaining duality gap less than $\epsilon$ for the primal-dual pair of problems ${\bf (P) }$ and ${\bf (D) }$. 

If a feasible point $X^{\text{feas}}$ for ${\bf (P_{\nu^0})}$ is available, one can use the Newton's method also in the initialization phase to obtain a very good approximation of $(X^0,y^0,Z^0)=(X(\nu^0),y(\nu^0),Z(\nu^0))$. (Dual variables can be also obtained as by-products.)
Moreover, thanks to some desirable properties\footnote{Namely \emph{self-concordance}. See Section 2.2 of \cite{renegar2001}} of the objective function $p_{\nu^0}(X)$, it is also possible to estimate the maximum number of Newton iterations involved in the initialization phase.
For instance, let $U_B$ be an upper bound of the initial level of suboptimality, i.e., $p_{\nu^0}(X^{\text{feas}})-p_{\nu^0}(X^*)\leq U_B$.  Then, starting from $X^{\text{feas}}$, it is possible to obtain an improved solution $X$ such that $p_{\nu^0}(X)-p_{\nu^0}(X^*)\leq \delta$ after at most $11U_B+\log_2\log_2 (1/\delta)$ Newton iterations (Theorem 3 in \cite{vandenberghe1998}).
It is notable that this number does not depend on the problem size $N$ at all.
Moreover, since the doubly logarithmic function grows extremely slowly, very few Newton iterations are needed to obtain an extremely good approximation of $(X^0,y^0,Z^0)=(X(\nu^0),y(\nu^0),Z(\nu^0))$.

Hence, unless it is difficult to find $X^{\text{feas}}$ with moderate $U_B$, it is reasonable to assume that the number $11U_B+\log_2\log_2 (1/\delta)$ is dominated by $\mathcal{O}(N\log(1/\epsilon)+N)$.
Hence, we will use an expression $\mathcal{O}(N\log(1/\epsilon))$ to estimate the number of Newton iterations involved in both initialization and the path following phases.

\subsection{Complexity of SRD problem}
Performing Newton iterations is the main computational burden in Algorithm  \ref{pathfollowingalg} (and initialization). Since it is easy to give an initial feasible point  for problem (\ref{optprob3}), our observation so far indicates that  $\mathcal{O}(N\log(1/\epsilon))$ is a reasonable upper bound on the number of Newton iterations to solve (\ref{optprob3}). Now we will consider how the arithmetic complexity grows as $T$ grows in  (\ref{optprob3}). 

Since $N$ grows proportionally to $T$, the number of Newton iterations required is bounded by  $\mathcal{O}(T\log(1/\epsilon))$. To see how many arithmetic operations are needed to perform a single Newton step, observe that it is possible to write (\ref{optprob3}) in the form of {\bf (P)} in such a way that $\mathcal{A}(\cdot)$ is a banded linear operator, since all linear constraints appearing in  (\ref{optprob3}) contain variables from neighboring time steps. 
Hence, it is shown that the linear system (\ref{newtondirection}) to be solved in each Newton iteration can be written as $Ax=b$, where $A$ is a banded matrix of dimension $\mathcal{O}(T)$ with bandwidth $\mathcal{O}(n)$. (Here, $n$ is the dimension of the Gauss-Markov process (\ref{gmprocess})).
It is well-known that the arithmetic complexity to solve such linear systems is $\mathcal{O}(Tn^2)$. (e.g., \cite{thorson1979}).

To conclude, we have shown that at most $\mathcal{O}(T\log(1/\epsilon))$ Newton iterations are required to obtain an $\epsilon$-optimal solution to  (\ref{optprob3}), and each Newton iteration requires $\mathcal{O}(T)$ arithmetic operations. Thus, we obtain an expression $\mathcal{O}(T^2\log(1/\epsilon))$ for the arithmetic complexity of  (\ref{optprob3}).
\fi

\section*{Acknowledgment}

The authors would like to thank Prof. Sekhar Tatikonda for valuable discussions.

\ifCLASSOPTIONcaptionsoff
  \newpage
\fi



%

\bibliographystyle{IEEEtran}
\bibliography{InfoRegulator}

\end{document}